\documentclass[11pt]{article}

\usepackage{amsmath, amsfonts, amssymb, amsthm, mathtools, enumerate}
\usepackage{graphicx,subcaption,caption}

\usepackage[blocks, affil-it]{authblk}

\usepackage[numbers, square]{natbib}
\usepackage[CJKbookmarks=true,
            bookmarksnumbered=true,
			bookmarksopen=true,
			colorlinks=true,
			citecolor=red,
			linkcolor=blue,
			anchorcolor=red,
			urlcolor=blue]{hyperref}
\usepackage[usenames]{color}

\usepackage[letterpaper, left=1.2truein, right=1.2truein, top = 1.2truein, bottom = 1.2truein]{geometry}
\usepackage[ruled, vlined, lined, commentsnumbered]{algorithm2e}

\usepackage{prettyref,soul}

\newtheorem{lemma}{Lemma}[section]
\newtheorem{proposition}{Proposition}[section]
\newtheorem{thm}{Theorem}[section]

\newtheorem{corollary}{Corollary}[section]

% \everymath{\displaystyle}

\newrefformat{eq}{(\ref{#1})}
\newrefformat{chap}{Chapter~\ref{#1}}
\newrefformat{sec}{Section~\ref{#1}}
\newrefformat{algo}{Algorithm~\ref{#1}}
\newrefformat{fig}{Fig.~\ref{#1}}
\newrefformat{tab}{Table~\ref{#1}}
\newrefformat{rmk}{Remark~\ref{#1}}
\newrefformat{clm}{Claim~\ref{#1}}
\newrefformat{def}{Definition~\ref{#1}}
\newrefformat{cor}{Corollary~\ref{#1}}
\newrefformat{lmm}{Lemma~\ref{#1}}
\newrefformat{lemma}{Lemma~\ref{#1}}
\newrefformat{prop}{Proposition~\ref{#1}}
\newrefformat{app}{Appendix~\ref{#1}}
\newrefformat{ex}{Example~\ref{#1}}
\newrefformat{exer}{Exercise~\ref{#1}}
\newrefformat{soln}{Solution~\ref{#1}}
\newrefformat{cond}{Condition~\ref{#1}}

 % the symbol P for probability used the sans serif letter
 % the symbol E for expectation used the sans serif letter
 % the symbol Cov for covariance used the sans serif letter
 % the symbol Var for covariance used the sans serif letter

 % bold Greek letter
 % bold Greek letter used for script

\def\text#1{\mbox{\rm #1}}

\DeclarePairedDelimiter{\ceil}{\lceil}{\rceil}

\newcommand{\argmin}{\mathop{\rm argmin}}

\newcommand{\norm}[1]{\|{#1} \|}

\newcommand{\Prob}{\mathbb{P}}
\newcommand{\E}{\mathbb{E}}

\newcommand{\iprod}[2]{\left \langle #1, #2 \right\rangle}

\newcommand{\floor}[1]{{\left\lfloor {#1} \right \rfloor}}

\newcommand{\dimnone}{n_1}
\newcommand{\dimntwo}{n_2}
\newcommand{\dimkone}{k_1}
\newcommand{\dimktwo}{k_2}

\title{Optimal Estimation and Completion of Matrices with Biclustering Structures
}
% \iffalse
\author[1]{Chao Gao}
\author[1]{Yu Lu}
\author[2]{Zongming Ma}
\author[1]{Harrison H.~Zhou}
\affil[1]{
Yale University
% \\
% \{chao.gao, huibin.zhou\}@yale.edu
}
\affil[2]{
University of Pennsylvania
% \\
% zongming@wharton.upenn.edu
}
% \fi
% \author{ }
\date{ }

\begin{document}
\maketitle

\begin{abstract}
Biclustering structures in data matrices were first formalized in a seminal paper by John Hartigan \cite{hartigan1972direct} where one seeks to cluster cases and variables simultaneously.
Such structures are also prevalent in block modeling of networks.
In this paper, we develop a theory for the estimation and completion of matrices with biclustering structures, where the data is a partially observed and noise contaminated matrix with a certain underlying biclustering structure.
In particular, we show that a constrained least squares estimator achieves minimax rate-optimal performance in several of the most important scenarios.
To this end, we derive unified high probability upper bounds for all sub-Gaussian data and also provide matching minimax lower bounds in both Gaussian and binary cases.
Due to the close connection of graphon to stochastic block models, an immediate consequence of our general results is a minimax rate-optimal estimator for sparse graphons.
% 	\nb{to be rewritten!!}
% Matrix completion is a well-studied subject in machine learning and statistics. A common assumption for the matrix completion problem is that the underlying mean matrix is of low rank. In this paper, we investigate an alternative assumption. Namely, we assume that the mean matrix has clustering structures for both rows and columns. Under such biclustering structure, we propose a constraint least squares estimator and show that it achieves the optimal recovery rate with partially observed entries. In comparison, a low rank matrix estimator has only sub-optimal rate under the biclustering setting. By regarding stochastic block model as a symmetric biclustering structure, the proposed estimator also solves the network completion problem with partially observed graph edges. An immediate consequence of the result is an optimal sparse graphon estimation using the same estimator.
\smallskip

\textbf{Keywords.} Biclustering; graphon; matrix completion; missing data; stochastic block models; sparse network.
\end{abstract}
% \begin{keyword}[class=AMS]
% \kwd[Primary ]{62H12}
% \kwd[; secondary ]{62C20}
% \end{keyword}
% \begin{keyword}
% \kwd{Convex programming, group Lasso, Minimax rates, Rates of convergence, Sparse CCA (SCCA)}
% \end{keyword}

% \end{frontmatter}

\section{Introduction}

In a range of important data analytic scenarios, we encounter matrices with biclustering structures.
For instance, in gene expression studies, one can organize the rows of a data matrix to correspond to individual cancer patients and the columns to transcripts. 
Then the patients are expected to form groups according to different cancer subtypes and the genes are also expected to exhibit clustering effect according to the different pathways they belong to.
Therefore, after appropriate reordering of the rows and the columns, the data matrix is expected to have a biclustering structure contaminated by noises \cite{lee2010}.
Here, the observed gene expression levels are real numbers.
In a different context, such a biclustering structure can also be present in network data.
For example, stochastic block model (SBM for short) \cite{holland1983stochastic} is a popular model for exchangeable networks. 
In SBMs, the graph nodes are partitioned into $k$ disjoint communities and the probability that any pair of nodes are connected is determined entirely by the community memberships of the nodes.
Consequently, if one rearranges the nodes from the same communities together in the graph adjacency matrix, then the mean adjacency matrix, where each off-diagonal entry equals the probability of an edge connecting the nodes represented by the corresponding row and column, also has a biclustering structure.

The goal of the present paper is to develop a theory for the estimation (and completion when there are missing entries) of matrices with biclustering structures.
To this end, we propose to consider the following general model
\begin{align}
\label{eq:model}
X_{ij}=\theta_{ij}+\epsilon_{ij}, \quad i\in[n_1], j\in[n_2],
\end{align}
where for any positive integer $m$, we let $[m] = \{1,\dots, m\}$.
Here, for each $(i,j)$, $\theta_{ij} = \mathbb{E}[X_{ij}]$ and $\epsilon_{ij}$ is an independent piece of mean zero sub-Gaussian noise.
% \nb{symmetry and diagonal constraints for network data?}
Moreover, we allow entries to be missing completely at random \citep{rubin1976inference}.
Thus, let $E_{ij}$ be i.i.d.~Bernoulli random variables with success probability $p\in (0,1]$ indicating whether the $(i,j)$th entry is observed, and define the set of observed entries
\begin{align}
\label{eq:Omega}
\Omega = \{(i,j): E_{ij}=1\}.
\end{align}
Our final observations are
\begin{align}
	\label{eq:observed}
	X_{ij}, \quad (i,j)\in \Omega.
\end{align}
% our observations are \nb{this is in contradiction with the $Y$ defined later!!!}
% \begin{align}
% \label{eq:observed}
% Y_{ij} = X_{ij} E_{ij}\quad \mbox{and} \quad E_{ij}, \quad i\in [n_1], j\in [n_2].
% \end{align}
To model the biclustering structure, we focus on the case where 
there are $k_1$ row clusters and $k_2$ column clusters, and the values of $\{\theta_{ij}\}$ are taken as constant if the rows and the columns belong to the same clusters.
The goal is then to recover the signal matrix $\theta \in \mathbb{R}^{n_1\times n_2}$ from the observations \eqref{eq:observed}.
To accommodate most interesting cases, especially the case of undirected networks, we shall also consider the case where the data matrix $X$ is symmetric with zero diagonals. In such cases, we also require $X_{ij} = X_{ji}$ and $E_{ij} = E_{ji}$ for all $i\neq j$.

\paragraph{Main contributions}
In this paper, we propose a unified estimation procedure for partially observed data matrix generated from model \eqref{eq:model} -- \eqref{eq:observed}.
We establish high probability upper bounds for the mean squared errors of the resulting estimators.
In addition, we show that
these upper bounds are minimax rate-optimal in both the continuous case and the binary case by providing matching minimax lower bounds.
Furthermore, SBM can be viewed as a special case of the symmetric version of \eqref{eq:model}. Thus, an immediate application of our results is the network completion problem for SBMs. 
With partially observed network edges, our method gives a rate-optimal estimator for the probability matrix of the whole network in both the dense and the sparse regimes, which further leads to rate-optimal graphon estimation in both regimes.

\paragraph{Connection to the literature}
If only a low rank constraint is imposed on the mean matrix $\theta$, then \eqref{eq:model} -- \eqref{eq:observed} becomes what is known in the literature as the matrix completion problem \cite{recht2010guaranteed}.
An impressive list of algorithms and theories have been developed for this problem, including but not limited to \citep{candes2009exact,keshavan2009matrix,candes2010power,candes2010matrix,cai2010singular,keshavan2010matrix,recht2011simpler,koltchinskii2011nuclear}. 
In this paper, we investigate an alternative biclustering structural assumption for the matrix completion problem, which was first proposed by John Hartigan \cite{hartigan1972direct}.
 % which also coined the term ``biclustering''.
Note that a biclustering structure automatically implies low-rankness. 
However, if one applies a low rank matrix completion algorithm directly in the current setting,  the resulting estimator suffers an inferior error bound to the minimax rate-optimal one. 
Thus, a full exploitation of the biclustering structure is necessary, which is the focus of the current paper.

The results of our paper also imply rate-optimal estimation for sparse graphons. 
Previous results on graphon estimation include \cite{airoldi2013stochastic,wolfe2013nonparametric,olhede2014network,borgs2015consistent,choi2015co} and the references therein. The minimax rates for dense graphon estimation were derived by \cite{gao2014rate}. 
During the time when this paper was written, we became aware of an independent result on optimal sparse graphon estimation by \cite{klopp2015oracle}. 
% While the emphasis of \cite{klopp2015oracle} is on oracle inequalities, our new and independent contributions to sparse graphon estimation include both the treatment of missing observations and adaptation.

There are also an interesting line of works on biclustering \citep{flynn2012consistent,rohe2012co,choi2014co}. While these papers aim to recover the clustering structures of rows and columns, the goal of the current paper is to estimate the underlying mean matrix with optimal rates.

\paragraph{Organization}
After a brief introduction to notation, the rest of the paper is organized as follows. 
In Section \ref{sec:method}, we introduce the precise formulation of the problem and propose a constrained least squares estimator for the mean matrix $\theta$. 
In Section \ref{sec:theory}, we show that the proposed estimator leads to minimax optimal performance for both Gaussian and binary data. 
Section \ref{sec:ext} presents some extensions of our results to sparse graphon estimation and adaptation. 
Implementation and simulation results are given in Section \ref{sec:simulation}.
In Section \ref{sec:discussion}, we discuss the key points of the paper and propose some open problems for future research.
The proofs of the main results are laid out in Section \ref{sec:pf}, with some auxiliary results deferred to the appendix.

\paragraph{Notation}
% We close this section by introducing some notation. Given an integer $k$, we use $[k]$ to denote the set $\{1,2,...,k\}$.
For a vector $z\in[k]^n$, define the set $z^{-1}(a)=\{i\in[n]:z(i)=a\}$ for $a\in[k]$. For a set $S$, $|S|$ denotes its cardinality and $\mathbf{1}_S$ denotes the indicator function. For a matrix $A=(A_{ij})\in\mathbb{R}^{n_1\times n_2}$, the $\ell_2$ norm and $\ell_{\infty}$ norm are defined by $\norm{A}=\sqrt{\sum_{ij}A_{ij}^2}$ and $\norm{A}_{\infty}=\max_{ij}|A_{ij}|$, respectively. The inner product for two matrices $A$ and $B$ is $\iprod{A}{B}=\sum_{ij}A_{ij}B_{ij}$. Given a subset $\Omega\in[n_1]\times [n_2]$, we use the notation $\iprod{A}{B}_{\Omega}=\sum_{(i,j)\in\Omega}A_{ij}B_{ij}$ and $\norm{A}_{\Omega}=\sqrt{\sum_{(i,j)\in\Omega}A_{ij}^2}$. Given two numbers $a,b\in\mathbb{R}$, we use $a\vee b=\max(a,b)$ and $a\wedge b=\min(a,b)$. The floor function $\floor{a}$ is the largest integer no greater than $a$, and the ceiling function $\ceil{a}$ is the smallest integer no less than $a$. For two positive sequences $\{a_n\},\{b_n\}$, $a_n\lesssim b_n$ means $a_n\leq Cb_n$ for some constant $C>0$ independent of $n$, and $a_n\asymp b_n$ means $a_n\lesssim b_n$ and $b_n\lesssim a_n$.  The symbols $\mathbb{P}$ and $\mathbb{E}$ denote generic probability and expectation operators whose distribution is determined from the context.

\section{Constrained least squares estimation}
\label{sec:method}

Recall the generative model defined in \eqref{eq:model} and also the definition of the set $\Omega$ in \eqref{eq:Omega} of the observed entries.
As we have mentioned, throughout the paper, we assume that the $\epsilon_{ij}$'s are independent sub-Gaussian noises with sub-Gaussianity parameter uniformly bounded from above by $\sigma > 0$.
More precisely, we assume
\begin{equation}
\mathbb{E}e^{\lambda\epsilon_{ij}}\leq e^{\lambda^2\sigma^2/2},
\quad
\text{for all }i\in [n_1], j\in [n_2]\,\, \mbox{and}\,\, \lambda\in\mathbb{R}.
\label{eq:sub-G}
\end{equation}

We consider two types of biclustering structures. 
One is rectangular and asymmetric, where we assume that the mean matrix 
belongs to the following parameter space
\begin{equation}
\label{eq:para-space-asym}
\begin{aligned}
\Theta_{k_1k_2}(M) = \Big\{\theta = (\theta_{ij})\in\mathbb{R}^{n_1\times n_2} & : \theta_{ij}=Q_{z_1(i)z_2(j)}, z_1\in[k_1]^{n_1},z_2\in[k_2]^{n_2},\\
& ~~~ Q\in[-M,M]^{k_1\times k_2}\Big\}.	
\end{aligned}
\end{equation}
In other words, the mean values within each bicluster is homogenous, i.e., $\theta_{ij}=Q_{ab}$ if the $i$th row belongs to the $a$th row cluster and the $j$th column belong to the $b$th column cluster.
The other type of structures we consider is the square and symmetric case.
In this case, we impose symmetry requirement on the data generating process, i.e., $n_1= n_2 =n$ and
\begin{align}
\label{eq:sym-con}
X_{ij} = X_{ji},\,\, E_{ij} = E_{ji}, \,\,\mbox{for all $i\neq j$.}
\end{align}
Since the case is mainly motivated by undirected network data where there is no edge linking any node to itself, we also assume $X_{ii} = 0$ for all $i\in [n]$.
Finally, the mean matrix is assumed to belong to the following parameter space
\begin{equation}
	\label{eq:para-space-sym}
\begin{aligned}
\Theta^s_k(M)=\Big\{\theta=(\theta_{ij})\in\mathbb{R}^{n\times n} & :
\theta_{ii}=0,\, \theta_{ij}=\theta_{ji}=Q_{z(i)z(j)}\text{ for $i>j$}, z\in[k]^{n},\\
& ~~~ Q = Q^T\in[-M,M]^{k\times k}
\Big\}.
\end{aligned}	
\end{equation}

We proceed by assuming that we know the parameter space $\Theta$ which can be either $\Theta_{k_1 k_2}(M)$ or $\Theta^s_k(M)$ and the rate $p$ of an independent entry being observed, 
% In practice, the observation rate $p$ can be estimated by its empirical counterpart,
% while the discussion of adaptation to the numbers of clusters is deferred to \prettyref{sec:adapt}.
% \nb{add pointer to adaptation to $p$! --ZM}
% A procedure that is adaptive to the numbers of clusters is given in Section \ref{sec:adapt}.
The issues of adaptation to unknown numbers of clusters and unknown observation rate $p$ are addressed later in \prettyref{sec:adapt} and \prettyref{sec:adapt-p}.
Given $\Theta$ and $p$, we propose to estimate $\theta$ by the following program 
\begin{equation}
\min_{\theta\in\Theta}\left\{\norm{\theta}^2-\frac{2}{p}\iprod{X}{\theta}_{\Omega}\right\}.
\label{eq:pre-form}
\end{equation}
If we define 
\begin{align}
	\label{eq:data-Y}
Y_{ij}=X_{ij}E_{ij}/p,	
\end{align}
then \eqref{eq:pre-form} is equivalent to the following constrained least squares problem
\begin{equation}
\min_{\theta\in\Theta}\norm{Y-\theta}^2, \label{eq:LS}
\end{equation}
and hence the name of our estimator.
When the data is binary, $\Theta = \Theta^s_k(1)$ and $p = 1$, the problem specializes to estimating the mean adjacency matrix in stochastic block models, and
the estimator defined as the solution to \eqref{eq:LS} reduces to the least squares estimator in \cite{gao2014rate}. 

% \bigskip
%
% \nb{to be rewritten!!}
%
% % We assume the data matrix is missing completely at random \citep{rubin1976inference}. That is, let $p\in(0,1]$ be the observation rate, and let $\{E_{ij}\}_{i\in[n_1],j\in[n_2]}$ be i.i.d. Bernoulli$(p)$ random variables. Define the set $\Omega=\{(i,j)\in[n_1]\times[n_2]: E_{ij}=1\}$, and we observe
% % $$X_{ij}=\theta_{ij}+\epsilon_{ij},\quad(i,j)\in\Omega.$$
% % The noise is assumed to be i.i.d. with a sub-Gaussian tail. Namely,
%
%  We also consider a symmetric version of the parameter space. That is,
%
% From now on, we use $\Theta$ to refer to both parameter spaces.
%
% In the case when $p=1$, \cite{gao2014rate} proposed to minimize $\norm{X-\theta}^2$ subject to $\theta\in\Theta$. The algorithm can be equivalently written as
% $$\min_{\theta\in\Theta}\left\{\norm{\theta}^2-2\iprod{X}{\theta}\right\}.$$
% When we only observe in $\Omega$, we can replace the inner product $\iprod{X}{\theta}$ by $p^{-1}\iprod{X}{\theta}_{\Omega}$. This inspires us to propose the estimator via the following optimization procedure

\section{Main results}\label{sec:theory}

In this section, we provide theoretical justifications of the constrained least squares estimator defined as the solution to \eqref{eq:LS}.
Our first result is the following universal high probability upper bounds.

\begin{thm}\label{thm:main}
For any global optimizer of (\ref{eq:LS}) and any constant $C'>0$, there exists a constant $C>0$ only depending on $C'$ such that
$$\norm{\hat{\theta}-\theta}^2\leq C\frac{M^2\vee\sigma^2}{p}\left(k_1k_2+n_1\log k_1+n_2\log k_2\right),$$
with probability at least $1-\exp\left(-C'\left(k_1k_2+n_1\log k_1+n_2\log k_2\right)\right)$ uniformly over $\theta\in\Theta_{k_1k_2}(M)$ and all error distributions satisfying (\ref{eq:sub-G}). For the symmetric parameter space $\Theta^s_k(M)$, the bound is simplified to
$$\norm{\hat{\theta}-\theta}^2\leq C\frac{M^2\vee\sigma^2}{p}\left(k^2+n\log k\right),$$
with probability at least $1-\exp\left(-C'\left(k^2+n\log k\right)\right)$ uniformly over $\theta\in\Theta^s_{k}(M)$ and all error distributions satisfying (\ref{eq:sub-G}).
\end{thm}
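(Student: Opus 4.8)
My plan is to combine the standard basic inequality for constrained least squares with a union bound over the combinatorial cluster assignments and a covering argument inside each fixed block subspace. Write $\theta$ for the true mean matrix, recall $Y_{ij}=X_{ij}E_{ij}/p$, and set $W=Y-\theta$, so that
$$W_{ij}=\theta_{ij}\pth{\frac{E_{ij}}{p}-1}+\frac{\epsilon_{ij}E_{ij}}{p}$$
has independent, mean-zero entries. Since $\hat\theta$ minimizes $\norm{Y-\cdot}^2$ over $\Theta$ and $\theta\in\Theta$, expanding $\norm{Y-\hat\theta}^2\le\norm{Y-\theta}^2$ gives the basic inequality $\norm{\hat\theta-\theta}^2\le 2\iprod{W}{\hat\theta-\theta}$. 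Writing $\Delta=\hat\theta-\theta$ and dividing by $\norm{\Delta}$ reduces the problem to an upper bound on $\sup_u\iprod{W}{u}$, where $u$ ranges over unit vectors of the form $(\theta'-\theta)/\norm{\theta'-\theta}$ with $\theta'\in\Theta$.

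Next I would stratify by the combinatorial part of $\Theta$. For a fixed assignment pair $(z_1,z_2)\in[k_1]^{n_1}\times[k_2]^{n_2}$, let $\mathcal{T}(z_1,z_2)$ be the space of matrices constant on the induced blocks, of dimension at most $k_1k_2$. If $\theta'$ has assignment $(z_1,z_2)$ and $\theta$ has the fixed true assignment $(z_1^*,z_2^*)$, then $\theta'-\theta$ lies in $V:=\mathcal{T}(z_1,z_2)+\mathcal{T}(z_1^*,z_2^*)$ with $\dim V\le 2k_1k_2$, and $\sup_{u\in V,\,\norm{u}=1}\iprod{W}{u}=\norm{P_VW}$ for the orthogonal projection $P_V$ onto $V$. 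Passing to a $\tfrac12$-net $\mathcal N$ of the unit sphere of $V$, with $\log|\mathcal N|\lesssim\dim V\lesssim k_1k_2$, gives $\norm{P_VW}\le 2\max_{u\in\mathcal N}\iprod{W}{u}$. A direct computation gives $\Var(W_{ij})\lesssim(M^2\vee\sigma^2)/p$, so for each fixed unit $u$ Bernstein's inequality in its sub-Gaussian regime yields $\Prob(\iprod{W}{u}\ge t)\le\exp(-c\,pt^2/(M^2\vee\sigma^2))$. Taking a union bound over $\mathcal N$ and over all $k_1^{n_1}k_2^{n_2}$ assignment pairs produces a total log-cardinality of order $\rho:=k_1k_2+n_1\log k_1+n_2\log k_2$ (the net supplies the $k_1k_2$ term and $\log(k_1^{n_1}k_2^{n_2})$ supplies the $n_1\log k_1+n_2\log k_2$ terms), so choosing $t^2\asymp\frac{M^2\vee\sigma^2}{p}\rho$ makes the failure probability at most $\exp(-C'\rho)$. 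Feeding $\sup_u\iprod{W}{u}\lesssim\sqrt{(M^2\vee\sigma^2)\rho/p}$ back into the basic inequality yields the asserted bound.

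The hard part will be the concentration step, because for small $p$ the entries $W_{ij}$ are only sub-exponential, not sub-Gaussian: the sub-exponential scale of $u_{ij}W_{ij}$ is of order $(M\vee\sigma)\norm{u}_\infty/p$, and Bernstein stays in the sub-Gaussian regime used above only when $\norm{u}_\infty\lesssim\sqrt{p/\rho}$. Directions $u$ concentrated on small blocks (in the extreme, a single entry) violate this and force the sub-exponential branch. To handle them I would split $W=W^{(1)}+W^{(2)}$ with $W^{(1)}_{ij}=\epsilon_{ij}E_{ij}/p$ and $W^{(2)}_{ij}=\theta_{ij}(E_{ij}/p-1)$, and exploit the box constraint: since $\hat\theta,\theta\in[-M,M]$ entrywise, $\norm{\Delta}_\infty\le 2M$, so the summands of $\iprod{W^{(2)}}{\Delta}$ are bounded by $2M^2/p$ and the bounded-difference form of Bernstein controls the missingness term; for $W^{(1)}$ I would condition on $\{E_{ij}\}$ and use sub-Gaussianity of $\epsilon$ given the observation pattern, paying for the fluctuation of $\sum_{(i,j)\in\Omega}u_{ij}^2$ around $p$. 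Verifying that these sub-exponential contributions are absorbed into the $k_1k_2$ term, uniformly over all assignment pairs, is the crux; note that when $p=1$ and the noise is Gaussian, $W=\epsilon$ is exactly sub-Gaussian and this difficulty disappears.

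Finally, for the symmetric space $\Theta^s_k(M)$ the same scheme applies with a single assignment $z\in[k]^n$, so the number of assignments is $k^n$ (contributing $n\log k$) and the symmetric block space has dimension $\asymp k^2$; accounting for the constraints $W_{ij}=W_{ji}$ and $\theta_{ii}=0$ only affects constants, and one recovers the rate $\frac{M^2\vee\sigma^2}{p}(k^2+n\log k)$ with failure probability $\exp(-C'(k^2+n\log k))$.
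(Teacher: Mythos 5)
You have the right skeleton---the basic inequality $\norm{\hat\theta-\theta}^2\le 2\iprod{W}{\hat\theta-\theta}$, a union bound over the $k_1^{n_1}k_2^{n_2}$ assignments, and Bernstein-type concentration for the products of sub-Gaussian and Bernoulli variables---and this matches the paper's strategy at a high level. But the step you yourself flag as ``the crux'' is a genuine gap, and the route you propose for it (controlling $\sup_{u\in V,\norm{u}=1}\iprod{W}{u}=\norm{P_VW}$ via a net of the unit sphere of the block subspace $V$) provably cannot work when $p$ is small. Write $N:=k_1k_2+n_1\log k_1+n_2\log k_2$. If some block of the assignment $(z_1,z_2)$ is a singleton $(i,j)$, then $V$ contains the coordinate direction $e_{(i,j)}$, and $\iprod{W^{(1)}}{e_{(i,j)}}=\epsilon_{ij}E_{ij}/p$ exceeds the target level $t\asymp\sqrt{(M^2\vee\sigma^2)N/p}$ with probability roughly $p\exp\pth{-pN/(2\sigma^2)}$. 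For $p=o(1)$ this is vastly larger than the required $\exp(-C'N)$, so after the union bound over $\exp(c(n_1\log k_1+n_2\log k_2))$ assignments the argument collapses. Neither of your repairs closes this: conditioning on $E$ leaves a sub-Gaussian parameter $\sigma^2\sum_{(i,j)\in\Omega}u_{ij}^2/p^2$, which for a spiky $u$ is $\sigma^2/p^2$ rather than $\sigma^2/p$; and the box constraint $\norm{\Delta}_\infty\le 2M$ bounds the \emph{coefficients}, not the spikiness of the normalized direction. The underlying point is that $\sup_{u\in V,\norm{u}=1}\iprod{W}{u}$ is simply too large a quantity to control at this probability level; one must exploit that $\hat\theta-\theta$ is not an arbitrary element of $V$.

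The paper circumvents exactly this by inserting the intermediate matrix $\tilde\theta$ (the block average of the true $\theta$ under the \emph{estimated} assignment) and treating the two pieces asymmetrically. For $\tilde\theta-\theta$, the direction depends only on the discrete assignment (no net over a sphere is needed), and the case analysis on whether $\norm{\tilde\theta-\theta}^2$ exceeds $C_2(M^2\vee\sigma^2)N/p$ guarantees that in the dangerous case the normalized direction has $\ell_\infty$ norm at most $2M/\norm{\tilde\theta-\theta}\lesssim\sqrt{p/N}$, which keeps Bernstein in its sub-Gaussian branch (Lemma \ref{lem:partition1}). For $\hat\theta-\tilde\theta$, the explicit form of the least squares solution (Proposition \ref{prop:ls}) shows each entry is a block average of $W$ truncated by the box constraint to $|\bar W_{ab}(\hat z)|\wedge\tau$; the truncation restores a sub-Gaussian squared-exponential moment at the correct scale uniformly over block sizes, with the sub-exponential tail handled separately (Lemma \ref{lm:subexV} and Lemmas \ref{lem:average}, \ref{lem:partition2}). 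Some device of this kind---using the specific structure of $\hat\theta-\theta$ rather than a worst-case supremum over $V$---is indispensable; without it your argument does not establish the theorem for general $p\in(0,1]$, though it would go through essentially as written in the fully observed case $p=1$ with bounded noise.
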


When $(M^2\vee\sigma^2)$ is bounded, the rate in Theorem \ref{thm:main} is $\left(k_1k_2+n_1\log k_1+n_2\log k_2\right)/p$ which can be decomposed into two parts. 
The part involving $k_1k_2$ reflects the number of parameters in the biclustering structure, 
while the part involving $(n_1\log k_1+n_2\log k_2)$ results from the complexity of estimating the clustering structures of rows and columns. 
It is the price one needs to pay for not knowing the clustering information. 
In contrast, the minimax rate for matrix completion under low rank assumption would be $(n_1\vee n_2)(k_1\wedge k_2)/p$ \cite{koltchinskii2011nuclear,ma2013volume}, 
since without any other constraint the biclustering assumption implies that the rank of the mean matrix $\theta$ is at most $k_1\wedge k_2$.
Therefore, we have $\left(k_1k_2+n_1\log k_1+n_2\log k_2\right)/p\ll (n_1\vee n_2)(k_1\wedge k_2)/p$ as long as both $n_1\vee n_2$ and $k_1\wedge k_2$ tend to infinity. 
Thus, by fully exploiting the biclustering structure, we obtain a better convergence rate than only using the low rank assumption.

In the rest of this section, we discuss two most representative cases, namely the Gaussian case and the symmetric Bernoulli case.
The latter case is also known in the literature as stochastic block models.

\paragraph{The Gaussian case}
Specializing Theorem \ref{thm:main} to Gaussian random variables, we obtain the following result.
\begin{corollary}\label{cor:Gaussian}
Assume $\epsilon_{ij}\stackrel{iid}{\sim} N(0,\sigma^2)$ and $M\leq C_1\sigma$ for some constant $C_1>0$. For any constant $C'>0$, there exists some constant $C$ only depending on $C_1$ and $C'$ such that
$$\norm{\hat{\theta}-\theta}^2\leq C\frac{\sigma^2}{p}\left(k_1k_2+n_1\log k_1+n_2\log k_2\right),$$
with probability at least $1-\exp\left(-C'\left(k_1k_2+n_1\log k_1+n_2\log k_2\right)\right)$ uniformly over $\theta\in\Theta_{k_1k_2}(M)$. For the symmetric parameter space $\Theta^s_k(M)$, the bound is simplified to
$$\norm{\hat{\theta}-\theta}^2\leq C\frac{\sigma^2}{p}\left(k^2+n\log k\right),$$
with probability at least $1-\exp\left(-C'\left(k^2+n\log k\right)\right)$ uniformly over $\theta\in\Theta^s_{k}(M)$.
\end{corollary}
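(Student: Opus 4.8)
The plan is to obtain this statement as an immediate consequence of Theorem~\ref{thm:main}, by verifying that the Gaussian setup falls within its scope and that the factor $M^2 \vee \sigma^2$ appearing there collapses to a multiple of $\sigma^2$ under the assumption $M \leq C_1\sigma$. Theorem~\ref{thm:main} already delivers the same rate and the same exponential probability bound for every error distribution satisfying the sub-Gaussian condition \eqref{eq:sub-G}, so no new concentration argument is needed; the work is entirely in checking two hypotheses and in tracking constants.

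First I would verify the sub-Gaussianity. For $\epsilon_{ij} \sim N(0,\sigma^2)$ the moment generating function is exactly $\mathbb{E}e^{\lambda\epsilon_{ij}} = e^{\lambda^2\sigma^2/2}$, so \eqref{eq:sub-G} holds with equality and with sub-Gaussianity parameter equal to $\sigma$. Hence the Gaussian model is a special case of the sub-Gaussian framework, and Theorem~\ref{thm:main} applies verbatim with this same $\sigma$.

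Next I would dispose of the prefactor. The hypothesis $M \leq C_1\sigma$ gives $M^2 \vee \sigma^2 \leq (C_1^2 \vee 1)\sigma^2$, so the leading constant $(M^2\vee\sigma^2)/p$ in Theorem~\ref{thm:main} is at most $(C_1^2\vee 1)\sigma^2/p$. Substituting this bound into the conclusion of Theorem~\ref{thm:main} yields
$$\norm{\hat{\theta} - \theta}^2 \leq C(C_1^2\vee 1)\frac{\sigma^2}{p}\left(k_1k_2 + n_1\log k_1 + n_2\log k_2\right),$$
and absorbing $(C_1^2\vee 1)$ into the constant produces the claimed inequality with a constant depending only on $C_1$ and $C'$. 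The probability bound is inherited unchanged from Theorem~\ref{thm:main}, since the underlying events are defined identically. The symmetric statement follows by the same two steps applied to the $\Theta^s_k(M)$ conclusion of Theorem~\ref{thm:main}, with $\left(k_1k_2+n_1\log k_1+n_2\log k_2\right)$ replaced by $\left(k^2+n\log k\right)$.

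Since the corollary is a direct specialization, there is no substantive obstacle: the only point requiring care is the bookkeeping of constants, namely that the final $C$ must be allowed to depend on $C_1$ (through the collapse of $M^2\vee\sigma^2$) in addition to the $C'$ inherited from Theorem~\ref{thm:main}. All of the genuine difficulty---the constrained least squares analysis, the control of the stochastic term uniformly over the parameter space, and the union bound over clusterings---resides in Theorem~\ref{thm:main} itself and is reused here without modification.
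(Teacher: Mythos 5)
Your proposal is correct and matches the paper's (implicit) argument exactly: the corollary is stated as an immediate specialization of Theorem~\ref{thm:main}, obtained by noting that $N(0,\sigma^2)$ noise satisfies \eqref{eq:sub-G} with equality and that $M\leq C_1\sigma$ gives $M^2\vee\sigma^2\leq (C_1^2\vee 1)\sigma^2$, so the constant absorbs the dependence on $C_1$. Nothing further is needed.
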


% \nb{I am afraid that this is an over claim that can cause us trouble in the long run. Can the same lower bound be obtained when $M\asymp \sigma$? --ZM}
We now present a rate matching lower bound in the Gaussian model to show that the result of Corollary \ref{cor:Gaussian} is minimax optimal. 
To this end, we use $\mathbb{P}_{(\theta,\sigma^2,p)}$ to indicate the probability distribution of the model $X_{ij}\stackrel{ind}{\sim} N(\theta_{ij},\sigma^2)$ with observation rate $p$.
\begin{thm}\label{thm:lower-Gaussian}
Assume $\frac{\sigma^2}{p}\left(\frac{k_1k_2}{n_1n_2}+\frac{\log k_1}{n_2}+\frac{\log k_2}{n_1}\right)\lesssim M^2$.
There exist some constants $C,c>0$, such that
$$\inf_{\hat{\theta}}\sup_{\theta\in\Theta_{k_1k_2}(M)}\mathbb{P}_{(\theta,\sigma^2,p)}\left(\norm{\hat{\theta}-\theta}^2> C\frac{\sigma^2}{p}\left(k_1k_2+n_1\log k_1+n_2\log k_2\right)\right)>c,$$
when $\log k_1\asymp\log k_2$, and
$$\inf_{\hat{\theta}}\sup_{\theta\in\Theta^s_{k}(M)}\mathbb{P}_{(\theta,\sigma^2,p)}\left(\norm{\hat{\theta}-\theta}^2> C\frac{\sigma^2}{p}\left(k^2+n\log k\right)\right)>c.$$
\end{thm}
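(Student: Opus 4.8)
The plan is to establish matching lower bounds for each of the three additive terms $k_1k_2$, $n_1\log k_1$, and $n_2\log k_2$ separately, by exhibiting three sub-families of $\Theta_{k_1k_2}(M)$ on each of which estimation is provably hard, and then to combine them. For a single sub-family I would invoke the probabilistic form of Fano's lemma: if one can produce hypotheses $\theta^{(1)},\dots,\theta^{(N)}$ that are $2\sqrt{s}$-separated in $\norm{\cdot}$ and whose pairwise Kullback--Leibler divergences satisfy $\frac{1}{N}\sum_{u,v}\mathrm{KL}(\mathbb{P}_{(\theta^{(u)},\sigma^2,p)}\|\mathbb{P}_{(\theta^{(v)},\sigma^2,p)})\le \alpha\log N$ for a small enough absolute constant $\alpha$, then $\inf_{\hat\theta}\max_u\mathbb{P}_{(\theta^{(u)},\sigma^2,p)}(\norm{\hat\theta-\theta^{(u)}}^2\ge s)>c$. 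The key quantity is the divergence: since each coordinate is observed independently with probability $p$ and then corrupted by $N(0,\sigma^2)$ noise, one has $\mathrm{KL}(\mathbb{P}_{(\theta,\sigma^2,p)}\|\mathbb{P}_{(\theta',\sigma^2,p)})=\frac{p}{2\sigma^2}\norm{\theta-\theta'}^2$, so the whole problem reduces to building packings with a controlled ratio of minimal to maximal separation. Having obtained the three bounds $\inf_{\hat\theta}\sup_{\theta}\mathbb{P}(\norm{\hat\theta-\theta}^2> C_0\frac{\sigma^2}{p}r_i)>c_0$ with $r_1=k_1k_2$, $r_2=n_1\log k_1$, $r_3=n_2\log k_2$, I would finish by noting that $r_1+r_2+r_3\le 3\max_i r_i$ and applying the bound for the largest $r_i$, which yields the claimed statement with $C=C_0/3$ and $c=c_0$.

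For the parametric term $k_1k_2$ I would fix a balanced pair of label maps $z_1,z_2$ (each cluster of size $\asymp n_1/k_1$, resp.\ $\asymp n_2/k_2$) and let only $Q$ vary over a hypercube $\{-\delta,\delta\}^{k_1\times k_2}\subseteq[-M,M]^{k_1\times k_2}$. A Varshamov--Gilbert packing of this hypercube of log-cardinality $\gtrsim k_1k_2$ whose elements differ in $\asymp k_1k_2$ coordinates induces, through $\norm{\theta-\theta'}^2\asymp \frac{n_1n_2}{k_1k_2}\norm{Q-Q'}_{\mathrm F}^2$, a family separated by $s\asymp \frac{n_1n_2}{k_1k_2}\cdot k_1k_2\,\delta^2=n_1n_2\delta^2$ with maximal divergence $\asymp \frac{p}{\sigma^2}n_1n_2\delta^2$. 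Choosing $\delta^2\asymp \frac{\sigma^2}{p}\frac{k_1k_2}{n_1n_2}$ balances the Fano condition against $\log N\gtrsim k_1k_2$ and gives $s\asymp \frac{\sigma^2}{p}k_1k_2$; the hypothesis $\frac{\sigma^2}{p}\frac{k_1k_2}{n_1n_2}\lesssim M^2$ is exactly what guarantees $\delta\le M$, so that the hypercube stays inside the parameter space.

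For the clustering term $n_1\log k_1$ I would instead freeze $z_2$ (balanced) and a single matrix $Q\in\{-\delta,\delta\}^{k_1\times k_2}$ chosen so that its $k_1$ rows are pairwise Hamming-separated by $\gtrsim k_2$ (again by Varshamov--Gilbert), and let $\theta$ be indexed by the row label map $z_1\in[k_1]^{n_1}$. Then two label maps that disagree on a row $i$ contribute $\sum_j(Q_{z_1(i)z_2(j)}-Q_{z_1'(i)z_2(j)})^2\asymp \frac{n_2}{k_2}\cdot k_2\delta^2=n_2\delta^2$ to $\norm{\theta-\theta'}^2$. Using a combinatorial packing of $[k_1]^{n_1}$ with log-cardinality $\gtrsim n_1\log k_1$ and pairwise Hamming distance $\asymp n_1$, the induced family is $s\asymp n_1n_2\delta^2$-separated with divergence $\asymp \frac{p}{\sigma^2}n_1n_2\delta^2$; choosing $\delta^2\asymp \frac{\sigma^2}{p}\frac{\log k_1}{n_2}$ matches the Fano budget $\log N\gtrsim n_1\log k_1$ and yields $s\asymp \frac{\sigma^2}{p}n_1\log k_1$, with $\delta\le M$ ensured by the hypothesis $\frac{\sigma^2}{p}\frac{\log k_1}{n_2}\lesssim M^2$. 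The term $n_2\log k_2$ is handled by the transpose of this construction.

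I expect the clustering terms to be the main obstacle, for two reasons. First, one must simultaneously obtain a large separation between hypotheses (for a lower bound on $s$) and a small divergence (for the Fano condition), which forces a careful choice of both the signal level $\delta$ and the packing geometry; the existence of a label packing of $[k_1]^{n_1}$ that is both rich ($\log N\gtrsim n_1\log k_1$) and well-separated (Hamming distance $\asymp n_1$) is the combinatorial crux. Second, for $Q$ to possess $k_1$ pairwise-separated rows one needs the code length $k_2$ to satisfy $k_2\gtrsim \log k_1$; this is precisely where the assumption $\log k_1\asymp\log k_2$ enters, since it gives $\log k_1\lesssim\log k_2\le k_2$. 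The symmetric case follows the same two-term template ($k^2$ parametric and $n\log k$ clustering) over $\Theta^s_k(M)$, the only additional care being to keep all perturbations symmetric---perturb $Q_{ab}$ and $Q_{ba}$ together, and note that changing one node label alters both a row and the corresponding column so that each mismatch contributes $\asymp n\delta^2$---while the vanishing diagonal affects only $n$ of the $n^2$ entries and is absorbed as a lower-order term; here $k\ge 2$ already supplies enough separated rows, consistent with the absence of an extra condition in the symmetric statement.
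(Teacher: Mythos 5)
Your proposal follows essentially the same route as the paper: fix balanced labels and vary $Q$ over a Varshamov--Gilbert hypercube at scale $\delta^2\asymp\frac{\sigma^2}{p}\frac{k_1k_2}{n_1n_2}$ for the parametric term, then fix a $Q$ with well-separated rows/columns and vary the label vector over a packing of $[k_1]^{n_1}$ (resp.\ $[k_2]^{n_2}$) for the clustering terms, with the condition $\log k_1\asymp\log k_2$ entering exactly where you say it does. The only substantive deviation is that for the $k_1k_2$ piece the paper invokes the chi-squared refinement of Fano's inequality rather than the classical KL version you use: when $k_1k_2=O(1)$ the packing has $\log N=O(1)$ and the additive $\log 2$ in the standard Fano numerator renders that bound vacuous, so you would need either the chi-squared variant or a two-point Le Cam argument to cover that regime.
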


\paragraph{The symmetric Bernoulli case}
When the observed matrix is symmetric with zero diagonal and Bernoulli random variables as its super-diagonal entries, it can be viewed as the adjacency matrix of an undirected network and the problem of estimating its mean matrix with missing data can be viewed as a network completion problem.
% A special case of the matrix completion problem is the network completion problem.
Given a partially observed Bernoulli adjacency matrix $\{X_{ij}\}_{(i,j)\in\Omega}$, one can predict the unobserved edges by estimating the whole mean matrix $\theta$.  Also, we assume that each edge is observed independently with probability $p$.
% \nb{I have changed all adjacency matrix notation from $A$ to $X$ to be self-consistent in terms of notation. --ZM}
% We assume a symmetric biclustering structure, called stochastic block model \citep{holland1983stochastic} to model random graphs with a community structure. 

Given a symmetric adjacency matrix $X=X^T\in\{0,1\}^{n\times n}$ 
with zero diagonals,
% that codes an undirected network with no self loops in the sense that $X_{ii}=0$ for all $i\in[n]$,
the stochastic block model \citep{holland1983stochastic} assumes $\{X_{ij}\}_{i>j}$ are independent Bernoulli random variables with mean $\theta_{ij}=Q_{z(i)z(j)}\in[0,1]$ with some matrix $Q\in[0,1]^{k\times k}$ and some label vector $z\in [k]^n$. 
In other words, the probability that there is an edge between the $i$th and the $j$th nodes only depends on their community labels $z(i)$ and $z(j)$. 
The following class then includes all possible mean matrices of stochastic block models with $n$ nodes and $k$ clusters and with edge probabilities uniformly bounded by $\rho$: 
\begin{align}
\Theta_k^+(\rho)=\left\{\theta\in[0,1]^{n\times n}: \theta_{ii}=0, \theta_{ij}=\theta_{ji}=Q_{z(i)z(j)}, Q=Q^T\in[0,\rho]^{k\times k},z\in[k]^n\right\}.	
\end{align}
% \nb{I changed the notation here. --ZM}
By the definition in \eqref{eq:para-space-sym}, $\Theta^+_k(\rho)\subset \Theta^s_k(\rho)$.

Although the tail probability of Bernoulli random variables does not satisfy the sub-Gaussian assumption (\ref{eq:sub-G}),  a slightly modification of the proof of Theorem \ref{thm:main} leads to the following result. The proof of Corollary \ref{cor:SBM} will be given in Section \ref{sec:auxiliary} in the appendix.
\begin{corollary}
	\label{cor:SBM}
Consider the optimization problem (\ref{eq:LS}) with $\Theta = \Theta^s_k(\rho)$. For any global optimizer $\hat{\theta}$ and any constant $C'>0$, there exists a constant $C>0$ only depending on $C'$ such that
$$\norm{\hat{\theta}-\theta}^2\leq C\frac{\rho}{p}\left(k^2+n\log k\right),$$
with probability at least $1-\exp\left(-C'\left(k^2+n\log k\right)\right)$ uniformly over $\theta\in \Theta^s_k(\rho)\supset\Theta^+_{k}(\rho)$.
\end{corollary}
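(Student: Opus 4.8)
The plan is to obtain Corollary \ref{cor:SBM} as the Bernoulli specialization of the symmetric half of Theorem \ref{thm:main}. The only place where the sampling model enters the bound of Theorem \ref{thm:main} is through the noise level of the reweighted data: writing $W = Y-\theta$ for the effective noise, with $Y_{ij}=X_{ij}E_{ij}/p$ as in \eqref{eq:data-Y}, the factor $(M^2\vee\sigma^2)/p$ in Theorem \ref{thm:main} is simply an upper bound on the per-entry variance $\Var(W_{ij})$ valid for generic sub-Gaussian data with mean bounded by $M$. Since the proof of Theorem \ref{thm:main} must already accommodate the sub-exponential noise $\epsilon_{ij}E_{ij}/p$ created by missing data, it depends on $W$ only through this variance proxy together with a tail/envelope control. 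Thus the task reduces to re-evaluating that variance in the symmetric Bernoulli model and feeding the sharper value back into the deviation argument.

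The decisive step is therefore a direct variance computation. For $\theta\in\Theta^+_k(\rho)\subset\Theta^s_k(\rho)$ the super-diagonal entries $X_{ij}$ are independent Bernoulli$(\theta_{ij})$ with $0\le\theta_{ij}\le\rho$, and $E_{ij}$ is an independent Bernoulli$(p)$. Using $X_{ij}^2=X_{ij}$, $E_{ij}^2=E_{ij}$ and independence, one gets $\E W_{ij}=0$ together with
$$\Var(W_{ij}) = \frac{\theta_{ij}(1-p\,\theta_{ij})}{p} \le \frac{\theta_{ij}}{p} \le \frac{\rho}{p}.$$
This is the crux: the smallness of the Bernoulli variance replaces the generic level $(M^2\vee\sigma^2)/p$ by $\rho/p$, which is strictly smaller in the sparse regime $\rho\to 0$. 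In addition $W_{ij}=X_{ij}E_{ij}/p-\theta_{ij}$ obeys the almost-sure envelope $|W_{ij}|\le 1/p$.

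With these two ingredients I would replay the argument behind Theorem \ref{thm:main}. The optimality of $\hat\theta$ yields the basic inequality $\norm{\hat\theta-\theta}^2\le 2\iprod{W}{\hat\theta-\theta}$, after which one peels over the discrete structure: at most $k^n$ label vectors contribute $n\log k$ to the exponent, and for each fixed labeling the admissible difference $\Delta=\hat\theta-\theta$ is constant on the $O(k^2)$ blocks $B_{ab}$ of the common refinement of the two partitions, contributing $k^2$. Writing $S_{ab}=\sum_{(i,j)\in B_{ab}}W_{ij}$ and $\delta_{ab}$ for the value of $\Delta$ on $B_{ab}$, one has $\iprod{W}{\Delta}=\sum_{ab}\delta_{ab}S_{ab}$ with $\norm{\Delta}^2=\sum_{ab}|B_{ab}|\,\delta_{ab}^2$. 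Each $S_{ab}$ is a sum of $|B_{ab}|$ independent mean-zero bounded terms of total variance at most $|B_{ab}|\rho/p$, so Bernstein's inequality controls it with the variance contribution carrying the factor $\rho/p$. A union bound over the $\exp(O(k^2+n\log k))$ configurations, together with Cauchy--Schwarz $\iprod{W}{\Delta}\le\norm{\Delta}\big(\sum_{ab}S_{ab}^2/|B_{ab}|\big)^{1/2}$, gives the one-sided bound $\iprod{W}{\Delta}\le\tfrac14\norm{\Delta}^2+C(\rho/p)(k^2+n\log k)$ uniformly; inserting it into the basic inequality reproduces the stated rate and probability, now with $\rho/p$ in place of $(M^2\vee\sigma^2)/p$.

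The step requiring the most care is the envelope: because $|W_{ij}|\le 1/p$ rather than $\lesssim\sqrt{\rho}/p$, a careless Bernstein bound would leave behind an additive term of order $\tfrac1p(k^2+n\log k)$, which dominates the target $\tfrac{\rho}{p}(k^2+n\log k)$ precisely when $\rho$ is small. The resolution is the block aggregation above: since $\Delta$ is constant on each $B_{ab}$, the relevant fluctuation is the block sum $S_{ab}$, whose variance $\asymp|B_{ab}|\rho/p$ — not its individual envelope — sets the scale, so after Cauchy--Schwarz the linear-in-envelope contribution is genuinely lower order in the intended regime. Verifying this domination quantitatively, rather than the variance arithmetic itself, is where the real work lies; once it is in place, the bound holds over the constraint set $\Theta^s_k(\rho)$ with $M=\rho$ and hence, a fortiori, for every SBM mean matrix in $\Theta^+_k(\rho)\subset\Theta^s_k(\rho)$.
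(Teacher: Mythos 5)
Your starting point is sound, and in fact more careful than the paper's own one-line derivation: Corollary \ref{cor:SBM} cannot be obtained by plugging into the \emph{statement} of Theorem \ref{thm:main}, because a centered Bernoulli$(q)$ with $q\le\rho$ is not sub-Gaussian with parameter $O(\sqrt{\rho})$ (its optimal sub-Gaussian proxy is of order $1/\log(1/\rho)$), so one must indeed re-enter the proof and replace the moment generating function bound of Lemma \ref{lm:subexp} by its Bernoulli-specific analogue; your computation $\Var(W_{ij})=\theta_{ij}(1-p\theta_{ij})/p\le\rho/p$ and the envelope $|W_{ij}|\le 1/p$ are exactly the right inputs. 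The gap is in how you then replay the peeling argument. You assert that $\Delta=\hat\theta-\theta$ is constant on $O(k^2)$ blocks of the common refinement of the two partitions. The common refinement of $\hat z$ and the true $z$ has up to $k^2$ cells of $[n]$, so the induced blocks of $[n]\times[n]$ on which $\Delta$ is constant number up to $k^4$, not $k^2$. Your Cauchy--Schwarz step then requires controlling $\sum_{ab}S_{ab}^2/|B_{ab}|$, whose expectation alone is already of order $(\#\text{blocks})\cdot\rho/p$, so the argument as written delivers $\frac{\rho}{p}(k^4+n\log k)$, which is strictly worse than the target whenever $k\gg n^{1/4}$.

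The paper's route around this is the intermediate matrix $\tilde\theta$, the block average of the true $\theta$ over the \emph{estimated} partition $\hat z$, and the split $\hat\theta-\theta=(\hat\theta-\tilde\theta)+(\tilde\theta-\theta)$. The first difference is constant on only the $k^2$ blocks of $\hat z$ and is pointwise dominated by the truncated block-averaged noise $|\bar W_{ab}(\hat z)|\wedge 2M$, so your chi-square-type aggregation applies to it with $k^2$ terms (Lemmas \ref{lem:average} and \ref{lem:partition2}). The second difference is not decomposed into blocks at all: for each of the $k^n$ labelings it is a single fixed vector with $\ell_\infty$ norm at most $2M$, so on the event $\|\tilde\theta-\theta\|^2\gtrsim\frac{\rho}{p}(k^2+n\log k)$ the normalized vector has small enough sup-norm that a single Bernstein bound per labeling lands in the sub-Gaussian regime (Lemma \ref{lem:partition1}). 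You would need to import this decomposition, with the Bernoulli substitutions $M^2+2\sigma^2\to C\rho$ and $M\vee\sigma\to C$ (hence $\tau\to C\rho$) carried through Lemmas \ref{lm:subexp} and \ref{lm:subexV}, to reach the stated rate $\frac{\rho}{p}\left(k^2+n\log k\right)$.
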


When $\rho=p=1$, Corollary \ref{cor:SBM} implies Theorem 2.1 in \cite{gao2014rate}. 
A rate matching lower bound is given by the following theorem. We denote the probability distribution of a stochastic block model with mean matrix $\theta\in\Theta^+_k(\rho)$ and observation rate $p$ by $\mathbb{P}_{(\theta,p)}$.
\begin{thm}\label{thm:lower-SBM}
For stochastic block models, we have
$$\inf_{\hat{\theta}}\sup_{\theta\in\Theta^+_k(\rho)}\mathbb{P}_{(\theta,p)}\left(\norm{\hat{\theta}-\theta}^2>C\left(\frac{\rho\left(k^2+n\log k\right)}{p}\wedge \rho^2n^2\right)\right)>c,$$
for some constants $C,c>0$.
\end{thm}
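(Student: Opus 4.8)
The plan is to produce the two summands of the rate by two separate subfamilies of $\Theta_k^+(\rho)$ and then combine. Concretely I will establish
\[
\inf_{\hat\theta}\sup_{\theta\in\Theta_k^+(\rho)}\Prob_{(\theta,p)}\pth{\norm{\hat\theta-\theta}^2 > c\pth{\tfrac{\rho k^2}{p}\wedge\rho^2n^2}} > c'
\]
and
\[
\inf_{\hat\theta}\sup_{\theta\in\Theta_k^+(\rho)}\Prob_{(\theta,p)}\pth{\norm{\hat\theta-\theta}^2 > c\pth{\tfrac{\rho n\log k}{p}\wedge\rho^2n^2}} > c',
\]
and then combine: since both constructions live inside $\Theta_k^+(\rho)$, the minimax sup-risk over $\Theta_k^+(\rho)$ dominates the sup-risk over either subfamily, hence is at least the larger of the two bounds; and $(a\wedge c)\vee(b\wedge c)=(a\vee b)\wedge c\ge\frac12\pth{(a+b)\wedge c}$ then yields the claimed $\frac{\rho(k^2+n\log k)}{p}\wedge\rho^2n^2$. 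Throughout, the partial observation merely inflates information by the factor $p$: since $E$ is independent of $\theta$, $\mathrm{KL}\pth{\Prob_{(\theta,p)}\,\|\,\Prob_{(\theta',p)}} = p\sum_{i<j}\mathrm{KL}\pth{\mathrm{Bern}(\theta_{ij})\,\|\,\mathrm{Bern}(\theta'_{ij})}$, and for means confined to $[\rho/4,3\rho/4]$ each Bernoulli KL is $\lesssim(\theta_{ij}-\theta'_{ij})^2/\rho$, so $\mathrm{KL}\lesssim\frac{p}{\rho}\norm{\theta-\theta'}^2$. This $p/\rho$ factor is what turns a squared-distance budget into the $\rho/p$ scaling of the rate.

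For the parametric ($k^2$) term I fix a balanced label vector $z$ with cluster sizes $m\asymp n/k$ and vary only $Q$. Over the $\asymp k^2/2$ free symmetric entries I place a hypercube $Q_{ab}\in\{\rho/2-\delta,\rho/2+\delta\}$ and apply Assouad's lemma. Flipping one coordinate perturbs $\theta$ on an entire $m\times m$ block, so the per-coordinate loss is $\asymp m^2\delta^2$ while the per-coordinate information is $\asymp m^2p\,\delta^2/\rho$. Choosing $\delta\asymp\sqrt{\rho/(m^2p)}\wedge\rho$ keeps the per-coordinate KL (hence the adjacent total variation) bounded and gives risk $\gtrsim k^2\cdot m^2\delta^2\asymp n^2\delta^2\asymp\frac{\rho k^2}{p}\wedge\rho^2n^2$; the cap emerges automatically once the per-block sample size $m^2p\rho$ falls below $1$, where the separation saturates at $\delta\asymp\rho$.

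For the clustering ($n\log k$) term I fix $Q$ and vary the labels. I split $[n]$ into a fixed balanced ``anchor'' set $S_0$ (all $k$ clusters represented, $\asymp n/(2k)$ each) and a ``free'' set $S_1$ of size $\asymp n/2$, and let the free labels range over a Gilbert--Varshamov packing $\mathcal Z\subseteq[k]^{S_1}$ with $\log|\mathcal Z|\asymp n\log k$ and pairwise Hamming distance $\gtrsim n$. The matrix is taken two-valued, $Q_{ab}=\rho/2+\eta\xi_{ab}$ for a symmetric $\pm1$ code $\xi$ whose rows are pairwise $\asymp k$-separated, with $\eta\asymp\sqrt{\rho\log k/(np)}\wedge\rho$. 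Reassigning a free node between two clusters changes each of its anchor interactions by $\asymp\eta$ on an $\asymp k$-fraction of the anchors, giving separation $\norm{\theta_z-\theta_{z'}}^2\gtrsim n\cdot n\eta^2\asymp\frac{\rho n\log k}{p}\wedge\rho^2n^2$; matching this against the budget $\mathrm{KL}\lesssim\frac{p}{\rho}\norm{\theta_z-\theta_{z'}}^2\lesssim\log|\mathcal Z|$ reduces to $\eta^2\lesssim\frac{\rho\log k}{np}$, which holds by the choice of $\eta$, so Fano's inequality applies.

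The main obstacle is precisely this last construction, and specifically the control of the free--free interactions. Because reassigning $\Theta(n)$ free nodes among genuinely $k$-way distinguishable clusters alters $\Theta(n^2)$ entries, a naive design makes $\norm{\theta_z-\theta_{z'}}^2$, and hence the KL, scale like $\rho^2n^2$, overwhelming the $n\log k$ information budget in the dense/uncapped regime. The device that saves the argument is forcing $Q$ to take only the two values $\rho/2\pm\eta$: then every entry of $\theta_z-\theta_{z'}$, including those between two reassigned free nodes, has magnitude at most $2\eta$, so the upper bound $\norm{\theta_z-\theta_{z'}}^2\lesssim n^2\eta^2$ matches the separation lower bound up to constants and the Fano information condition becomes tight. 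The remaining items --- existence of a symmetric $\pm1$ code with uniformly $\asymp k$-separated rows (a routine probabilistic argument), the packing over $[k]^{S_1}$, and the bookkeeping for the zero-diagonal and symmetry constraints --- are routine.
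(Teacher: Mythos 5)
Your proposal is correct and follows essentially the same route as the paper: two subfamilies of $\Theta_k^+(\rho)$ (a hypercube perturbation of $Q$ about $\rho/2$ at scale $\sqrt{\rho k^2/(pn^2)}\wedge\rho$ for the parametric term, and a label packing with a two-valued, row-separated $Q$ at scale $\sqrt{\rho\log k/(pn)}\wedge\rho$ for the clustering term), combined with the locally quadratic Bernoulli KL bound $\mathrm{KL}\lesssim (p/\rho)\|\theta-\theta'\|^2$ and a Fano/Assouad argument, with the $\rho^2n^2$ cap emerging from capping the perturbation at order $\rho$. The only (immaterial) differences are your use of Assouad in place of the paper's Varshamov--Gilbert packing plus $\chi^2$-Fano for the parametric term, and your explicit anchor-set treatment of the symmetric clustering construction, which the paper delegates to ``a symmetric modification'' of its asymmetric argument.
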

The lower bound is the minimum of two terms. When $\rho\geq \frac{k^2+n\log k}{pn^2}$, the rate becomes $\frac{\rho(k^2+n\log k)}{p}\wedge \rho^2n^2\asymp\frac{\rho(k^2+n\log k)}{p}$. 
It is achieved by the constrained least squares estimator according to Corollary \ref{cor:SBM}. When $\rho< \frac{k^2+n\log k}{pn^2}$, the rate is dominated by $\rho^2n^2$. In this case, a trivial zero estimator achieves the minimax rate.

In the case of $p=1$, a comparable result has been found independently by \cite{klopp2015oracle}. 
However, our result here is more general as it accommodates missing observations. 
Moreover, the general upper bounds in \prettyref{thm:main} even hold for networks with weighted edges.

\section{Extensions}\label{sec:ext}

In this section, we extends the estimation procedure and the theory in Sections \ref{sec:method} and \ref{sec:theory} toward three directions: adaptation to unknown observation rate, adaptation to unknown model parameters,  and sparse graphon estimation.

\subsection{Adaptation to unknown observation rate}
\label{sec:adapt-p}

The estimator \eqref{eq:LS} depends on the knowledge of the observation rate $p$. 
When $p$ is not too small, such a knowledge is not necessary for achieving the desired rates.
Define 
\begin{align}
\label{eq:phat-asym}
\hat{p} = \frac{\sum_{i=1}^{n_1}\sum_{j=1}^{n_2} E_{ij}}{n_1n_2}
\end{align}
for the asymmetric and 
\begin{align}
\label{eq:phat-sym}
\hat{p} = \frac{\sum_{1\leq i<j\leq n} E_{ij}}{\frac{1}{2}n(n-1)}
\end{align}
for the symmetric case, and redefine
\begin{align}
\label{eq:Y-adapt}
Y_{ij} = X_{ij}E_{ij}/\hat{p}
\end{align}
where the actual definition of $\hat{p}$ is chosen between \eqref{eq:phat-asym} and \eqref{eq:phat-sym} depending on whether one is dealing with the asymmetric or symmetric parameter space.
Then we have the following result for the solution to \eqref{eq:LS} with $Y$ redefined by \eqref{eq:Y-adapt}.

% This section studies an adaptive estimator with $p$ in (\ref{eq:pre-form}) replace by its estimator $\hat{p}=\frac{|\Omega|}{n_1n_2}$. Again, we focus on the asymmetric parameter space $\Theta_{k_1k_2}(M)$.

\begin{thm}\label{thm:p}
For $\Theta = \Theta_{k_1k_2}(M)$, suppose for some absolute constant $C_1 > 0$,
\begin{align*}
p\geq C_1 \frac{[\log(n_1+n_2)]^2}{k_1k_2+n_1\log k_1+n_2\log k_2}.	
\end{align*}
Let $\hat\theta$ be the solution to \eqref{eq:LS} with $Y$ defined as in \eqref{eq:Y-adapt}. 
Then for any constant $C'>0$, there exists a constant $C>0$ only depending on $C'$ and $C_1$ such that
$$\norm{\hat{\theta}-\theta}^2\leq C\frac{M^2\vee\sigma^2}{p}\left(k_1k_2+n_1\log k_1+n_2\log k_2\right),$$
with probability at least $1-(n_1n_2)^{-C'}$ uniformly over $\theta\in\Theta$ and all error distributions satisfying (\ref{eq:sub-G}).

For $\Theta=\Theta^s_k(M)$, the same result holds if we replace $n_1$ and $n_2$ with $n$ and $k_1$ and $k_2$ with $k$ in the foregoing statement.
\end{thm}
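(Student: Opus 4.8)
\section*{Proof proposal for Theorem \ref{thm:p}}

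The plan is to reduce the unknown-$p$ estimator to the known-$p$ analysis of Theorem \ref{thm:main} by treating $\hat{p}$ as a small multiplicative perturbation of $p$. Write $\tilde{Y}_{ij}=X_{ij}E_{ij}/p$ for the oracle pseudo-data of Theorem \ref{thm:main}, so that the pseudo-data actually used, $Y_{ij}=X_{ij}E_{ij}/\hat{p}$ from \eqref{eq:Y-adapt}, satisfies $Y=(p/\hat{p})\tilde{Y}$. Since $\hat\theta$ minimizes $\norm{Y-\theta'}^2$ over $\theta'\in\Theta$, the basic inequality $\norm{Y-\hat\theta}^2\le\norm{Y-\theta}^2$ gives, after rearrangement,
$$\norm{\hat\theta-\theta}^2\le 2\iprod{Y-\theta}{\hat\theta-\theta}=2\frac{p}{\hat{p}}\iprod{\tilde{Y}-\theta}{\hat\theta-\theta}+2\pth{\frac{p}{\hat{p}}-1}\iprod{\theta}{\hat\theta-\theta}.$$
I will control the first (oracle) term by the empirical-process bound underlying Theorem \ref{thm:main}, and the second (perturbation) term by the concentration of $\hat{p}$, absorbing the resulting multiples of $\norm{\hat\theta-\theta}^2$ into the left-hand side.

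Since $n_1n_2\hat{p}=\sum_{ij}E_{ij}$ is a sum of $n_1n_2$ i.i.d.\ $\text{Bernoulli}(p)$ variables, Bernstein's inequality gives, for $\delta_n\asymp\sqrt{\log(n_1n_2)/(pn_1n_2)}$, an event $\Cal{A}=\{|\hat{p}-p|\le\delta_n p\}$ with $\Prob(\Cal{A}^c)\le(n_1n_2)^{-C'}$. The assumed lower bound on $p$ forces $pn_1n_2\gtrsim[\log(n_1+n_2)]^2\gtrsim\log(n_1n_2)$, so for $C_1$ large we have $\delta_n\le 1/2$; hence on $\Cal{A}$, $\hat{p}\ge p/2$, giving $p/\hat{p}\le 2$ and $|p/\hat{p}-1|=|p-\hat{p}|/\hat{p}\le 2\delta_n$.

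On $\Cal{A}$ the oracle term obeys $2(p/\hat{p})\iprod{\tilde{Y}-\theta}{\hat\theta-\theta}\le 4\iprod{\tilde{Y}-\theta}{\hat\theta-\theta}$ (the scalar $2p/\hat{p}\in(0,4]$ multiplies an upper bound with a nonnegative right-hand side). The one-sided, uniform-in-$\Theta$ bound established en route to Theorem \ref{thm:main}---for any fixed $\eta>0$, with probability at least $1-\exp(-C'(k_1k_2+n_1\log k_1+n_2\log k_2))$, $\iprod{\tilde{Y}-\theta}{\theta'-\theta}\le\eta\norm{\theta'-\theta}^2+C_\eta r^2$ simultaneously for all $\theta'\in\Theta$, where $r^2:=\frac{M^2\vee\sigma^2}{p}(k_1k_2+n_1\log k_1+n_2\log k_2)$---applies to the random $\hat\theta$; taking $\eta=1/8$ yields $4\iprod{\tilde{Y}-\theta}{\hat\theta-\theta}\le\frac12\norm{\hat\theta-\theta}^2+4C_\eta r^2$. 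For the perturbation term, Cauchy--Schwarz with $\norm{\theta}\le M\sqrt{n_1n_2}$ (since $\norm{\theta}_\infty\le M$ on $\Theta_{k_1k_2}(M)$) and $|p/\hat{p}-1|\le2\delta_n$ gives
$$2\pth{\frac{p}{\hat{p}}-1}\iprod{\theta}{\hat\theta-\theta}\le 4\delta_n M\sqrt{n_1n_2}\,\norm{\hat\theta-\theta}\le\frac14\norm{\hat\theta-\theta}^2+C_3\frac{M^2\log(n_1n_2)}{p},$$
using $\delta_n^2 n_1n_2\lesssim\log(n_1n_2)/p$ and AM--GM.

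Combining the two bounds and moving $\frac34\norm{\hat\theta-\theta}^2$ to the left gives $\norm{\hat\theta-\theta}^2\lesssim r^2+\frac{M^2\log(n_1n_2)}{p}$. It remains to verify the perturbation cost is dominated by $r^2$: the hypothesis $p\ge C_1[\log(n_1+n_2)]^2/(k_1k_2+n_1\log k_1+n_2\log k_2)$ forces $k_1k_2+n_1\log k_1+n_2\log k_2\ge C_1[\log(n_1+n_2)]^2\ge C_1\log(n_1n_2)$, so $\frac{M^2\log(n_1n_2)}{p}\le\frac{1}{C_1}r^2$, which is negligible for $C_1$ large; hence $\norm{\hat\theta-\theta}^2\le Cr^2$. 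A union bound over $\Cal{A}^c$ and the oracle event yields probability $1-(n_1n_2)^{-C'}$, using $\exp(-C'(k_1k_2+\cdots))\le(n_1n_2)^{-C'}$, again because $k_1k_2+\cdots\gtrsim\log(n_1n_2)$. The symmetric case $\Theta^s_k(M)$ is identical after replacing $\hat{p}$ by \eqref{eq:phat-sym} (a mean of $\binom{n}{2}$ i.i.d.\ Bernoullis), using $\norm{\theta}^2\le M^2n(n-1)$ and the symmetric empirical-process bound of Theorem \ref{thm:main}. The main obstacle is that $\hat{p}$ is statistically dependent on the observation pattern entering $Y$, so one cannot condition on $\hat{p}$ and rerun the fixed-$p$ proof verbatim; factoring $Y=(p/\hat{p})\tilde{Y}$ and invoking the bound of Theorem \ref{thm:main} \emph{uniformly over} $\Theta$ sidesteps the dependence, after which the only quantitative point is that the assumed lower bound on $p$ renders the $\frac{M^2\log(n_1+n_2)}{p}$ cost of estimating $p$ negligible relative to $r^2$.
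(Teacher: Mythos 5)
Your decomposition is genuinely different from the paper's. The paper keeps the perturbation additive: it sets $\mathcal{Y}=XE/\hat{p}$, bounds $\norm{Y-\mathcal{Y}}^2\lesssim[M^2+\sigma^2\log(n_1+n_2)]\log(n_1+n_2)/p^2$ (Lemma \ref{lem:aaa}), and reruns the three lemmas behind Theorem \ref{thm:main} with explicit $\norm{Y-\mathcal{Y}}$ correction terms (Lemma \ref{lem:aaaa}); the hypothesis $p\gtrsim[\log(n_1+n_2)]^2/(k_1k_2+n_1\log k_1+n_2\log k_2)$ is exactly what makes that correction negligible. You instead factor the perturbation multiplicatively, $Y=(p/\hat p)\tilde Y$, which cleanly separates an oracle term from a term $(\tfrac{p}{\hat p}-1)\iprod{\theta}{\hat\theta-\theta}$ that you absorb by Cauchy--Schwarz; your perturbation cost is only $M^2\log(n_1n_2)/p$ rather than the paper's $\sigma^2[\log(n_1+n_2)]^2/p^2$, so your route would in fact need a weaker condition on $p$ than the theorem assumes. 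The algebra, the Bernstein step for $\hat p$, and the final bookkeeping are all correct.

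The gap is the sentence claiming that a ``one-sided, uniform-in-$\Theta$ bound'' $\iprod{\tilde Y-\theta}{\theta'-\theta}\le\eta\norm{\theta'-\theta}^2+C_\eta r^2$ for \emph{all} $\theta'\in\Theta$ is ``established en route to Theorem \ref{thm:main}.'' It is not. Lemmas \ref{lem:average} and \ref{lem:partition2} are proved only for the least squares solution with respect to the \emph{same} pseudo-data appearing in the inner product: they rely on Proposition \ref{prop:ls}, i.e.\ on the identity $\hat Q_{ab}=\mathrm{sign}(\bar Y_{ab}(\hat z))(|\bar Y_{ab}(\hat z)|\wedge M)$, which yields $|\hat Q_{ab}-\bar\theta_{ab}(\hat z)|\le|\bar W_{ab}(\hat z)|\wedge 2M$ and hence the truncated quantities $V_{ab},R_{ab}$ that Lemma \ref{lm:subexV} can control. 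Your $\hat\theta$ minimizes against $XE/\hat p$, not against $\tilde Y$, so Proposition \ref{prop:ls} gives you the truncated averages of the \emph{wrong} matrix and these lemmas do not apply verbatim; this is precisely the dependence problem the paper's Lemma \ref{lem:aaaa} is written to handle. Your fix is salvageable but requires an argument you have not supplied: for arbitrary $\theta'=Q_{z_1(\cdot)z_2(\cdot)}$, decompose $\theta'-\theta=(\theta'-\tilde\theta_z)+(\tilde\theta_z-\theta)$ (orthogonal pieces), handle the second by Lemma \ref{lem:partition1}, and for the first use only $|Q_{ab}-\bar\theta_{ab}(z)|\le 2M$ together with $(Q_{ab}-\bar\theta_{ab})\bar W_{ab}-\eta(Q_{ab}-\bar\theta_{ab})^2\le\tfrac{1}{4\eta}\bigl(\bar W_{ab}^2\wedge c\tau|\bar W_{ab}|\bigr)$, after which the $V^2+R$ machinery of Lemma \ref{lm:subexV} applies uniformly over $z$. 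This step is where the sub-exponential (not sub-Gaussian) tails of $Y_{ij}-\theta_{ij}$ bite --- the untruncated sums $\sum_{ab}n_1(a)n_2(b)\bar W_{ab}(z)^2$ do \emph{not} concentrate at the rate $r^2$ --- so the uniform bound cannot simply be asserted; it must be derived via the cap at $M$. With that paragraph added, your proof is complete.
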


\subsection{Adaptation to unknown model parameters}
\label{sec:adapt}

We now provide an adaptive procedure for estimating $\theta$ without assuming the knowledge of the model parameters $k_1$, $k_2$ and $M$. 
The procedure can be regarded as a variation of a 2-fold cross validation \citep{wold1978cross}.
We give details on the procedure for the asymmetric parameter spaces $\Theta_{k_1k_2}(M)$, and that for the symmetric parameter spaces $\Theta^s_{k}(M)$ can be obtained similarly.

To adapt to $k_1$, $k_2$ and $M$, we split the data into two halves. 
Namely, sample i.i.d. $T_{ij}$ from Bernoulli$(\frac{1}{2})$. 
Define $\Delta=\{(i,j)\in[n_1]\times n_2:T_{ij}=1\}$. 
Define $Y_{ij}^{\Delta}=2X_{ij}E_{ij}T_{ij}/p$ and $Y_{ij}^{\Delta^c}=2X_{ij}E_{ij}(1-T_{ij})/p$ for all $(i,j)\in[n_1]\times [n_2]$. 
Then, for some given $(k_1,k_2,M)$, the least squares estimators using $Y^{\Delta}$ and $Y^{\Delta^c}$ are given by
$$\hat{\theta}^{\Delta}_{k_1k_2M}=\argmin_{\theta\in\Theta_{k_1k_2}(M)}\norm{Y^{\Delta}-\theta}^2,\quad \hat{\theta}^{\Delta^c}_{k_1k_2M}=\argmin_{\theta\in\Theta_{k_1k_2}(M)}\norm{Y^{\Delta^c}-\theta}^2.$$
Select the parameters by
$$(\hat{k}_1,\hat{k}_2,\hat{M})=\argmin_{(k_1,k_2,M)\in[n_1]\times[n_2]\times\mathcal{M}}\norm{\hat{\theta}_{k_1k_2M}^{\Delta}-Y^{\Delta^c}}^2_{\Delta^c},$$
where $\mathcal{M}=\left\{\frac{h}{n_1+n_2}: h\in[(n_1+n_2)^6]\right\}$,
and define $\hat{\theta}^{\Delta}=\hat{\theta}^{\Delta}_{\hat{k}_1\hat{k}_2\hat{M}}$. Similarly, we can also define $\hat{\theta}^{\Delta^c}$ by validate the parameters using $Y^{\Delta}$. The final estimator is given by
$$\hat{\theta}_{ij}=\begin{cases}
\hat{\theta}_{ij}^{\Delta^c}, & (i,j)\in\Delta; \\
\hat{\theta}_{ij}^{\Delta}, & (i,j)\in\Delta^c.
\end{cases}$$

\begin{thm}\label{thm:adaptive}
Assume $(n_1+n_2)^{-1}\leq M\leq (n_1+n_2)^5-(n_1+n_2)^{-1}$.
For any constant $C'>0$, there exists a constant $C>0$ only depending on $C'$ such that
$$\norm{\hat{\theta}-\theta}^2\leq C\frac{M^2\vee\sigma^2}{p}\left(k_1k_2+n_1\log k_1+n_2\log k_2+\frac{\log(n_1+n_2)}{p}\right),$$
with probability at least $1-\exp\left(-C'\left(k_1k_2+n_1\log k_1+n_2\log k_2\right)\right)-(n_1n_2)^{-C'}$ uniformly over $\theta\in\Theta_{k_1k_2}(M)$ and all error distributions satisfying (\ref{eq:sub-G}).
\end{thm}
Compared with Theorem \ref{thm:main}, the rate given by Theorem \ref{thm:adaptive} has an extra $p^{-1}\log(n_1+n_2)$ term. A sufficient condition for this extra term to be inconsequential is $p\gtrsim \frac{\log(n_1+n_2)}{n_1\wedge n_2}$.
Theorem \ref{thm:adaptive} is adaptive for all $(k_1,k_2)\in[n_1]\times[n_2]$ and for $(n_1+n_2)^{-1}\leq M\leq (n_1+n_2)^5-(n_1+n_2)^{-1}$. In fact, by choosing a larger $\mathcal{M}$, we can extend the adaptive region for $M$ to $(n_1+n_2)^{-a}\leq M\leq (n_1+n_2)^b$ for arbitrary constants $a,b>0$.

\subsection{Sparse graphon estimation}

Consider a random graph with adjacency matrix $\{X_{ij}\}\in\{0,1\}^{n\times n}$, whose sampling procedure is determined by
\begin{equation}
(\xi_1,...,\xi_n)\sim\mathbb{P}_{\xi},\quad X_{ij}|(\xi_i,\xi_j)\sim \text{Bernoulli}(\theta_{ij}),\quad\text{where }\theta_{ij}=f(\xi_i,\xi_j).\label{eq:graphon-gen}
\end{equation}
For $i\in[n]$, $X_{ii}=\theta_{ii}=0$. Conditioning on $(\xi_1,...,\xi_n)$, $X_{ij}=X_{ji}$ is independent across $i>j$. The function $f$ on $[0,1]^2$, which is assumed to be symmetric, is called a graphon. 
The concept of graphon is originated from graph limit theory \citep{hoover79,lovasz06,diaconis07,lovasz12} and the studies of exchangeable arrays \citep{aldous81,kallenberg89}. It is the underlying nonparametric object that generates the random graph. Statistical estimation of graphon has been considered by \cite{wolfe2013nonparametric,olhede2014network,gao2014rate,gao2015,lu2015} for dense networks. Using Corollary \ref{cor:SBM}, we present a result for sparse graphon estimation.

Let us start with specifying the function class of graphons. Define the derivative operator by
$$\nabla_{jk}f(x,y)=\frac{\partial^{j+k}}{(\partial x)^j(\partial y)^k}f(x,y),$$
and we adopt the convention $\nabla_{00}f(x,y)=f(x,y)$.
The H\"{o}lder norm is defined as
$$||f||_{\mathcal{H}_{\alpha}}=\max_{j+k\leq\floor{\alpha}}\sup_{x,y\in\mathcal{D}}\left|\nabla_{jk}f(x,y)\right|+\max_{j+k=\floor{\alpha}}\sup_{(x,y)\neq (x',y')\in\mathcal{D}}\frac{\left|\nabla_{jk}f(x,y)-\nabla_{jk}f(x',y')\right|}{||(x-x',y-y')||^{\alpha-\floor{\alpha}}},$$
where $\mathcal{D}=\{(x,y)\in[0,1]^2:x\geq y\}$. Then, the sparse graphon class with H\"{o}lder smoothness $\alpha$ is defined by
$$\mathcal{F}_{\alpha}(\rho,L)=\left\{0\leq f\leq \rho: \norm{f}_{\mathcal{H}_{\alpha}}\leq L\sqrt{\rho}, f(x,y)=f(y,x)\text{ for all }x\in\mathcal{D}\right\},$$
where $L>0$ is the radius of the class, which is assumed to be a constant.
As argued in \cite{gao2014rate}, it is sufficient to approximate a graphon with H\"{o}lder smoothness by a piecewise constant function. In the random graph setting, a piecewise constant function is the stochastic block model. Therefore, we can use the estimator defined by (\ref{eq:LS}). Using Corollary \ref{cor:SBM}, a direct bias-variance tradeoff argument leads to the following result.
An independent finding of the same result is also made by 
% During the preparation of the present manuscript, we became aware of the independent result on oracle inequalities for sparse graphon estimation in
\cite{klopp2015oracle}.

\begin{corollary}\label{cor:spg}
Consider the optimization problem (\ref{eq:LS}) where $Y_{ij}=X_{ij}$ and $\Theta = \Theta^s_k(M)$ with $k =\ceil{n^{\frac{1}{1+\alpha\wedge 1}}}$ and $M=\rho$. 
Given any global optimizer $\hat{\theta}$ of \eqref{eq:LS}, we estimate $f$ by $\hat{f}(\xi_i,\xi_j)=\hat{\theta}_{ij}$. Then, for any constant $C'>0$, there exists a constant $C>0$ only depending on $C'$ and $L$ such that
$$\frac{1}{n^2}\sum_{i,j\in[n]}\left(\hat{f}(\xi_i,\xi_j)-f(\xi_i,\xi_j)\right)^2\leq C\rho\left(n^{-\frac{2\alpha}{\alpha+1}}+\frac{\log n}{n}\right),$$
with probability at least $1-\exp(-C'(n^{\frac{1}{\alpha+1}}+n\log n) )$ uniformly over $f\in\mathcal{F}_{\alpha}(\rho,L)$ and $\mathbb{P}_{\xi}$.
\end{corollary}
 Corollary \ref{cor:spg} implies an interesting phase transition phenomenon.
When $\alpha\in(0,1)$, the rate becomes $\rho (n^{-\frac{2\alpha}{\alpha+1}}+\frac{\log n}{n} )\asymp \rho n^{-\frac{2\alpha}{\alpha+1}}$, which is the typical nonparametric rate times a sparsity index of the network. When $\alpha\geq 1$, the rate becomes $\rho (n^{-\frac{2\alpha}{\alpha+1}}+\frac{\log n}{n} )\asymp \frac{\rho\log n}{n}$, which does not depend on the smoothness $\alpha$. Corollary \ref{cor:spg} extends Theorem 2.3 of \cite{gao2014rate} to the case $\rho<1$. In \cite{wolfe2013nonparametric}, the graphon $f$ is defined in a different way. Namely, they considered the setting where $(\xi_1,...,\xi_n)$ are i.i.d. Unif$[0,1]$ random variables under $\mathbb{P}_{\xi}$. Then, the adjacency matrix is generated with Bernoulli random variables having means $\theta_{ij}=\rho f(\xi_i,\xi_j)$ for a nonparametric graphon $f$ satisfying $\int_0^1\int_0^1 f(x,y)dxdy=1$. For this setting, with appropriate smoothness assumption, we can estimate $f$ by $\hat{f}(\xi_i,\xi_j)=\hat{\theta}_{ij}/\rho$. The rate of convergence would be $\rho^{-1} (n^{-\frac{2\alpha}{\alpha+1}}+\frac{\log n}{n} )$.

Using the result of Theorem \ref{thm:adaptive}, we present an adaptive version for Corollary \ref{cor:spg}. The estimator we consider is a symmetric version of the one introduced in Section \ref{sec:adapt}. The only difference is that we choose the set $\mathcal{M}$ as $\{m/n: m\in[n+1]\}$. The estimator is fully data driven in the sense that it does not depend on $\alpha$ or $\rho$.
\begin{corollary}\label{cor:spg-adaptive}
Assume $\rho\geq n^{-1}$.
Consider the adaptive estimator $\hat{\theta}$ introduced in Theorem \ref{thm:adaptive}, and we set $\hat{f}(\xi_i,\xi_j)=\hat{\theta}_{ij}$. Then, for any constant $C'>0$, there exists a constant $C>0$ only depending on $C'$ and $L$ such that
$$\frac{1}{n^2}\sum_{i,j\in[n]}\left(\hat{f}(\xi_i,\xi_j)-f(\xi_i,\xi_j)\right)^2\leq C\rho\left(n^{-\frac{2\alpha}{\alpha+1}}+\frac{\log n}{n}\right),$$
with probability at least $1-n^{-C'}$ uniformly over $f\in\mathcal{F}_{\alpha}(\rho,L)$ and $\mathbb{P}_{\xi}$.
\end{corollary}

\section{Numerical Studies}\label{sec:simulation}

To introduce an algorithm solving (\ref{eq:pre-form}) or (\ref{eq:LS}), we write (\ref{eq:LS}) in an alternative way,
$$\min_{z_1\in[k_1]^{n_1},z_2\in[k_2]^{n_2},Q\in[l,u]^{k_1\times k_2}}L(Q,z_1,z_2),$$
where $l$ and $u$ are the lower and upper constraints of the parameters and
$$L(Q,z_1,z_2)=\sum_{(i,j)\in[n_1]\times[n_2]}(Y_{ij}-Q_{z_1(i)z_2(j)})^2.$$
For biclustering, we set $l=-M$ and $u=M$. For SBM, we set $l=0$ and $u=\rho$. We do not impose symmetry for SBM to gain computational convenience without losing much statistical accuracy.
The simple form of $L(Q,z_1,z_2)$ indicates that we can iteratively optimize over $(Q,z_1,z_2)$ with explicit formulas. This motivates the following algorithm.

\begin{algorithm}
\DontPrintSemicolon
\SetKwInOut{Input}{Input}\SetKwInOut{Output}{Output}
\Input{$\{X_{ij}\}_{(i,j)\in\Omega}$, the numbers of column and row clusters $(k_1, k_2)$, lower and upper constraints $(l,u)$ and the number of random starting points $m$. }
\Output{An $n_1 \times n_2$ matrix $\hat{\theta}$ with $\hat{\theta}_{ij} = Q_{z_1(i) z_2(j)}$.} 
\textbf{Preprocessing:} Let $X_{ij} = 0$ for $(i,j) \not\in \Omega$, $\hat{p}=|\Omega|/(n_1n_2)$ and $Y = X/\hat{p}$. \\
\nl \textbf{Initialization Step} \\
~~~~~Apply singular value decomposition and obtain $Y=UDV^T.$\;
~~~~~Run $k$-means algorithm on the first $k_1$ columns of $U$ with $m$ random starting points to get $z_1$.\;
~~~~~Run $k$-means algorithm on the first $k_2$ columns of $V$ with $m$ random starting points to get $z_2$.\;

    \While{not converge}{\label{InRes1}
	\nl Update $Q$: for each $(a,b) \in [k_1] \times [k_2], $ \; 
	\begin{equation}
	Q_{ab} = \frac{1}{|z_1^{-1}(a)||z_2^{-1}(b)|} \sum_{i \in z_1^{-1}(a)} \sum_{j \in z_2^{-1}(b)} Y_{ij}. \label{eq:laoluan1}
	\end{equation}
	If $Q_{ab} > u$, let $Q_{ab} = u$. If $Q_{ab}<l$, let $Q_{ab}=l$. \;
	\nl Update $z_1$: for each $i \in [n_1]$,
	\begin{equation} 
	z_1(i) = \argmin_{a \in [k_1]} \sum_{j=1}^{n_2} (Q_{az_2(j)} - A_{ij})^2.\label{eq:laoluan2}
	\end{equation}
	\nl Update $z_2$: for each $j \in [n_2]$,	
		\begin{equation} 
		z_2(j) = \argmin_{b \in [k_2]} \sum_{i=1}^{n_1} (Q_{z_1(i)b} - A_{ij})^2.\label{eq:laoluan3}
		\end{equation}
    }
\caption{A Biclustering Algorithm\label{alg:bl}}
\end{algorithm}

The iteration steps (\ref{eq:laoluan1}), (\ref{eq:laoluan2}) and (\ref{eq:laoluan3}) of Algorithm \ref{alg:bl} can be equivalently expressed as
$$Q=\argmin_{Q\in[l,u]^{k_1\times k_2}}L(Q,z_1,z_2);$$
$$z_1=\argmin_{z_1\in[k_1]^{n_1}}L(Q,z_1,z_2);$$
$$z_2=\argmin_{z_2\in[k_2]^{n_2}}L(Q,z_1,z_2).$$
Thus, each iteration will reduce the value of the objective function. It is worth noting that Algorithm \ref{alg:bl} can be viewed as a two-way extension for the ordinary $k$-means algorithm. Since the objective function is non-convex, one cannot guarantee convergence to global optimum.
We initialize the algorithm via a spectral clustering step using multiple random starting points, which worked well on simulated datasets we present below.

Now we present some numerical results to demonstrate the accuracy of the error rate behavior suggested by Theorem \ref{thm:main} on simulated data. 

\paragraph{Bernoulli case.} Our theoretical result indicates the rate of recovery is $\sqrt{\frac{\rho}{p}\left(\frac{k^2}{n^2}+\frac{\log k}{n}\right)}$ for the root mean squared error (RMSE) $\frac{1}{n}\|\hat{\theta} - \theta\|$. 
When $k$ is not too large, the dominating term is $\sqrt{\frac{\rho\log k}{pn}}$. We are going to confirm this rate by simulation.
We first generate our data from SBM with the number of blocks $k \in \{2,4,8,16\}$. The observation rate $p=0.5$. For every fixed $k$, we use four different $Q=0.5 \mathbf{1}_k \mathbf{1}_k^T + 0.1 t \mathbf{I}_k$ with $t=1,2,3,4$ and generate the community labels $z$ uniformly on $[k]$. Then we calculate the  error $\frac{1}{n}\|\hat{\theta} - \theta\|$.  
Panel (a) of Figure \ref{fig:bernoulli} shows the error versus the sample size $n$. 
In Panel (b), we rescale the x-axis to $N=\sqrt{\frac{pn}{\log k}}$. The curves for different $k$ align well with each other and the error decreases at the rate of $1/N$. This confirms our theoretical results in Theorem \ref{thm:main}.
\begin{figure}[h]
\centering
\begin{subfigure}{.5\textwidth}
  \centering
  \includegraphics[width=\linewidth]{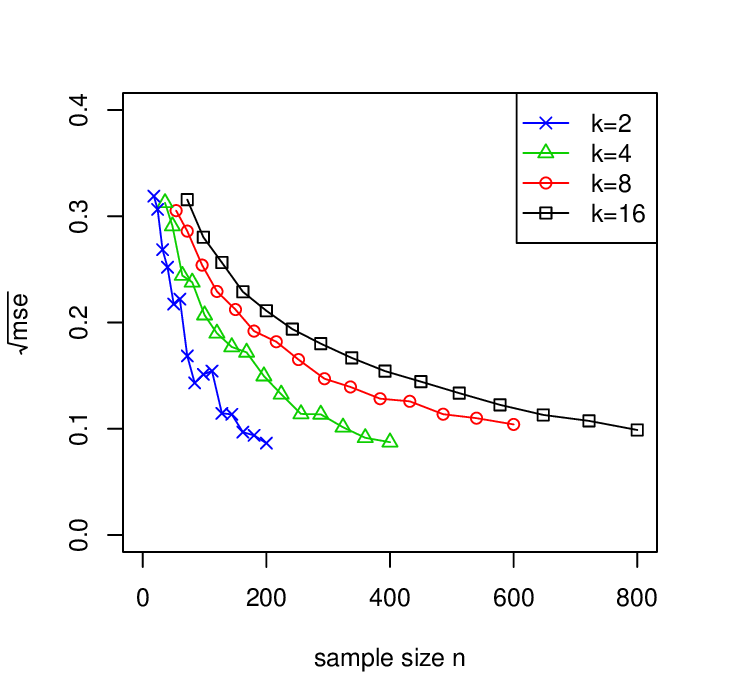}
  \caption{}
  \label{fig:sub1}
\end{subfigure}%
\begin{subfigure}{.5\textwidth}
  \centering
  \includegraphics[width=\linewidth]{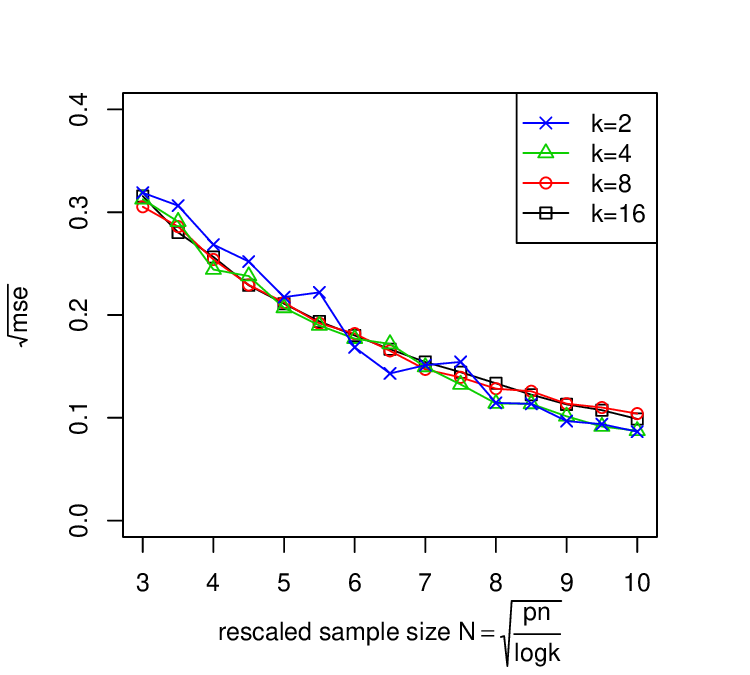}
  \caption{}
  \label{fig:sub2}
\end{subfigure}
\caption{\noindent Plots of $\frac{1}{n}\|\hat{\theta} - \theta\|$  when using our algorithm on SBM. Each curve corresponds to a different sample size $n$ with a fixed $k$. (a) Plots of error against the raw sample size $n$. (b) Plots of the same error against rescaled sample size $\sqrt{pn/\log k}$. 
% The curves are aligned fairly well as expected by our theory.
}
\label{fig:bernoulli}
\end{figure}

\paragraph{Gaussian case.} We simulate data with Gaussian noise under four different settings of $k_1$ and $k_2$. For each $(k_1, k_2) \in \{(4,4), (4,8), (8,8), (8,12) \}$, the entries of matrix $Q$ are independently and uniformly generated from $\{1,2,3,4,5\}$. The cluster labels $z_1$ and $z_2$ are uniform on $[k_1]$ and $[k_2]$ respectively. After generating $Q$, $z_1$ and $z_2$, we add an ${N}(0,1)$ noise to the data and observe $X_{ij}$ with probability $p=0.1$. For each number of rows $n_1$, we set the number of columns as $n_2 = n_1 \log k_1/\log k_2 $. Panel (a) of Figure \ref{fig:gaussian} shows the  error versus  $n_1$. In Panel (b), we rescale the x-axis by $N=\sqrt{\frac{pn_1}{\log k_2}}$. Again, the plots for different $(k_1, k_2)$ align fairly well and the error decreases roughly at the rate of $1/N$.
\begin{figure}[h]
\centering
\begin{subfigure}{.5\textwidth}
  \centering
  \includegraphics[width=\linewidth]{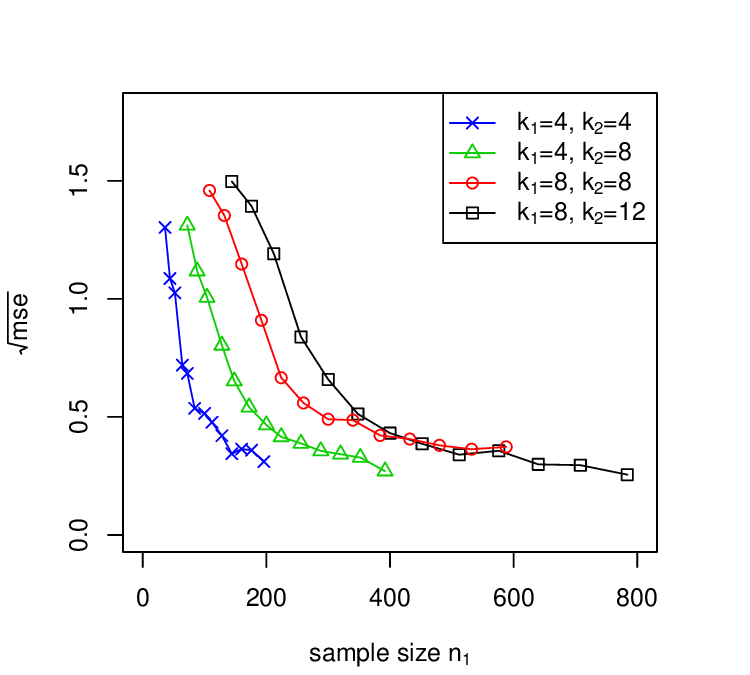}
  \caption{}
\end{subfigure}%
\begin{subfigure}{.5\textwidth}
  \centering
  \includegraphics[width=\linewidth]{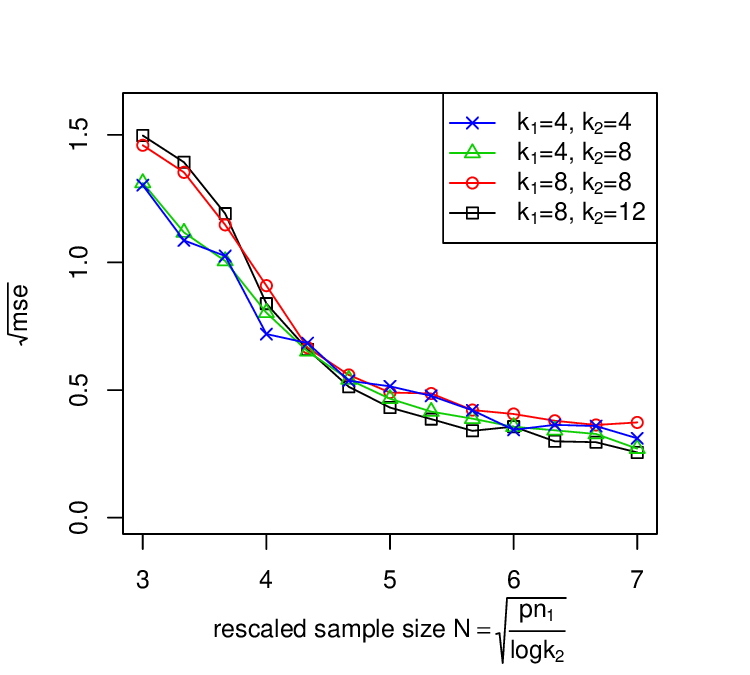}
  \caption{}
\end{subfigure}
\caption{\noindent Plots of  error $\frac{1}{\sqrt{n_1n_2}}\|\hat{\theta} - \theta\|$  when using our algorithm on biclustering data with gaussian noise. Each curve corresponds to a fixed $(k_1,k_2)$. (a) Plots of  error against $n_1$. (b) Plots of the same error against $\sqrt{pn_1/\log k_2}$. 
% All curves are aligned fairly well as expected by our theory.
}
\label{fig:gaussian}
\end{figure}

\paragraph{Sparse Bernoulli case.} We also study recovery of sparse SBMs. We do the same simulation as the Bernoulli case except that we choose $Q=0.02 \mathbf{1}_k \mathbf{1}_k^T + 0.05 t \mathbf{I}_k$ for $t=1,2,3,4$. The results are shown in Figure \ref{fig:simulation3}.
\begin{figure}[h]
\centering
\begin{subfigure}{.5\textwidth}
  \centering
  \includegraphics[width=\linewidth]{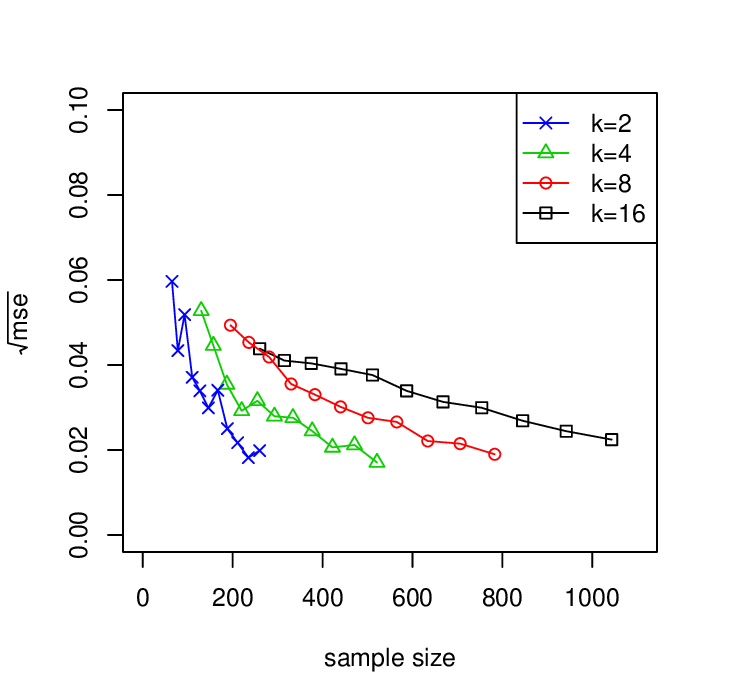}
  \caption{}
  \label{fig:sub1}
\end{subfigure}%
\begin{subfigure}{.5\textwidth}
  \centering
  \includegraphics[width=\linewidth]{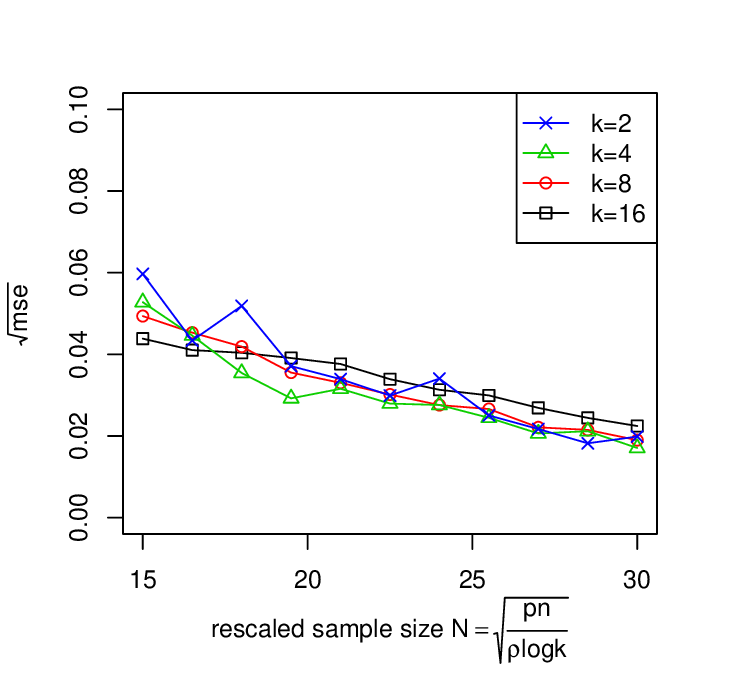}
  \caption{}
  \label{fig:sub2}
\end{subfigure}
\caption{\noindent Plots of error $\frac{1}{n} \|\hat{\theta} - \theta\|$  when using our algorithm on sparse SBM. Each curve corresponds to a fixed $k$. (a) Plots of error against the raw sample size $n$. (b) Plots of the same error against rescaled sample size $\sqrt{pn/\rho \log k}$. 
% All curves are aligned fairly well as expected by our theory.
}
\label{fig:simulation3}
\end{figure}

\paragraph{Adaptation to unknown parameters.}
We use the 2-fold cross validation procedure proposed in Section \ref{sec:adapt} to adaptively choose the unknown number of clusters $k$ and the sparsity level $\rho$. We use the setting of sparse SBM with the number of block $k \in \{4,6\}$ and $Q=0.05 \mathbf{1}_k \mathbf{1}_k^T + 0.1 t \mathbf{I}_k$ for $t=1,2,3,4$. When running our algorithms, we search over all the $(k,\rho)$ pair for $k \in \{2,3,\cdots,8\}$ and $\rho \in \{0.2, 0.3, 0.4, 0.5\}$. In Table \ref{tab:adpk=4}, we report the errors for different configurations of $Q$. The first row is the error obtained by our adaptive procedure and the second row is the error using the true $k$ and $\rho$. Consistent with our Theorem \ref{thm:adaptive}, the error from the adaptive procedure is almost the same as the oracle error. 

\begin{table}[h!]
\begin{center}
\begin{tabular}{ c|cccccc } 
 \hline
rescaled sample size  & 6  & 12 & 18  & 24 \\
 \hline
(k=4) adaptive $\sqrt{mse}$ & 0.084 & 0.066 & 0.058 & 0.058 \\ 
	oracle $\sqrt{mse}$ & 0.085 & 0.069 & 0.060 & 0.053 \\ 
 \hline
 (k=6) adaptive $\sqrt{mse}$ & 0.074 & 0.061 & 0.051 & 0.050 \\ 
	oracle $\sqrt{mse}$ & 0.078 & 0.067 & 0.056 & 0.048 \\ 
 \hline
\end{tabular}
\end{center}
\caption{Errors of the adaptive procedure versus the oracle.}
\label{tab:adpk=4}
\end{table}

\paragraph{The effect of constraints.} 
The optimization (\ref{eq:LS}) and Algorithm \ref{alg:bl} involves the constraint $Q\in[l,u]^{k_1\times k_2}$. It is curious whether this constraint really helps reduce the error or merely an artifact of the proof.
We investigate the effect of this constraint on simulated data by comparing Algorithm \ref{alg:bl} with its variation without the constraint for both Gaussian case and sparse Bernoulli case. 
Panel (a) of Figure \ref{fig:constrain} shows the plots of sparse SBM with 8 communities. Panel (b) is the plots of Gaussian case with $(k_1,k_2)=(4,8)$. For both panels, when the rescaled sample size is small, the effect of constraint is significant, while as the rescaled sample size increases, the performance of two estimators become similar. 

\begin{figure}[h]
\centering
\begin{subfigure}{.5\textwidth}
  \centering
  \includegraphics[width=\linewidth]{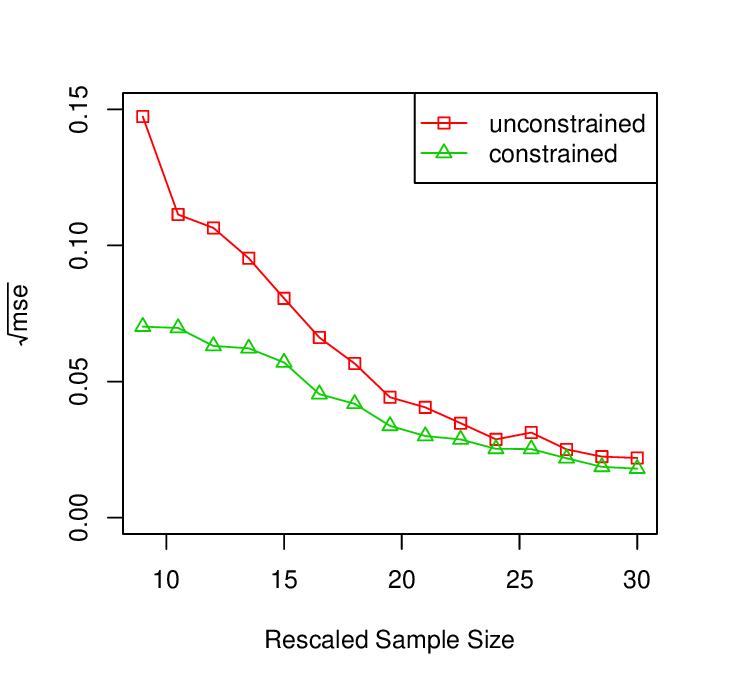}
  \caption{Sparse SBM with $k=8$}
  \label{fig:sub1}
\end{subfigure}%
\begin{subfigure}{.5\textwidth}
  \centering
  \includegraphics[width=\linewidth]{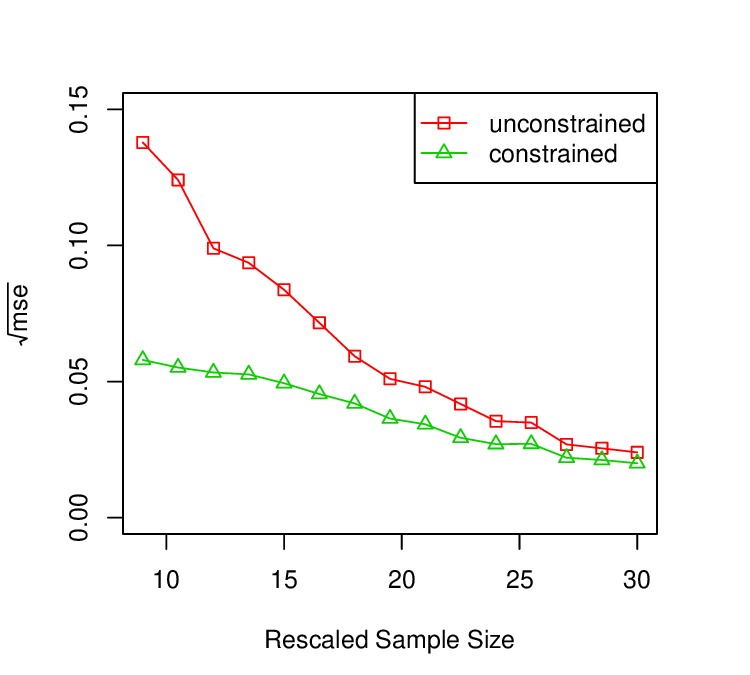}
  \caption{Gaussian biclustering with $(k_1, k_2)=(4,8)$}
  \label{fig:sub2}
\end{subfigure}
\caption{\noindent Constrained versus unconstrained least-squares}
\label{fig:constrain}
\end{figure}

\section{Discussion}\label{sec:discussion}

This paper studies the optimal rates of recovering a matrix with biclustering structure. While the recent progresses in high-dimensional estimation mainly focus on sparse and low rank structures, the study of biclustering structure does not gain much attention. This paper fills in the gap. 
In what follows, we discuss some key points of the paper and some possible future directions of research.

\textit{Difference from low-rankness.} 
A biclustering structure is implicitly low-rank. 
Therefore, we show that by exploring the stronger biclustering assumption, one can achieve better rates of convergence in estimation and completion. 
The minimax rates derived in this paper precisely characterize how much one can gain by taking advantage of this structure. 

\textit{Relation to other structures.} A natural question to investigate is whether there is similarity between the biclustering structure and the well-studied sparsity structure. The paper \cite{gao2015} gives a general theory of structured estimation in linear models that puts both sparse and biclustering structures in a unified theoretical framework. According to this general theory, the $k_1k_2$ part in the minimax rate is the complexity of parameter estimation and the $n_1\log k_1+n_2\log k_2$ part is the complexity of structure estimation.

\textit{Open problems.} The optimization problem (\ref{eq:LS}) is not convex, thus causing difficulty in devising a provably optimal polynomial-time algorithm. 
An open question is whether there is a convex relaxation of (\ref{eq:LS}) that can be solved efficiently without losing much statistical accuracy. 
Another interesting problem for future research is whether the objective function in (\ref{eq:LS}) can be extended beyond the least squares framework. 
% We leave these open problems to future research.

\section{Proofs}\label{sec:pf}

\subsection{Proof of Theorem \ref{thm:main}}

Below, we focus on the proof for the asymmetric parameter space $\Theta_{k_1k_2}(M)$. The result for the symmetric parameter space $\Theta_{k}^s(M)$ can be obtained by letting $k_1=k_2$ and by taking care of the diagonal entries.
Since $\hat{\theta}\in \Theta_{k_1k_2}(M)$, there exists $\hat{z}_1\in[k_1]^{n_1}$, $\hat{z}_2\in[k_2]^{n_2}$ and $\hat{Q}\in[-M,M]^{k_1\times k_2}$ such that $\hat{\theta}_{ij}=\hat{Q}_{\hat{z}_1(i)\hat{z}_2(j)}$. For this $(\hat{z}_1,\hat{z}_2)$, we define a matrix $\tilde{\theta}$ by
$$\tilde{\theta}_{ij}= \frac{1}{|\hat{z}_1^{-1}(a)||\hat{z}_2^{-1}(b)|}\sum_{(i,j)\in\hat{z}_1^{-1}(a)\times \hat{z}_2^{-1}(b)}\theta_{ij},$$
for any $(i,j)\in\hat{z}_1^{-1}(a)\times \hat{z}_2^{-1}(b)$ and any $(a,b)\in[k_1]\times [k_2]$. To facilitate the proof, we need to following three lemmas, whose proofs are given in the supplementary material.

\begin{lemma} \label{lem:average}
For any constant $C'>0$, there exists a constant $C_1>0$ only depending on $C'$, such that
$$\norm{\hat{\theta}-\tilde{\theta}}^2 \leq  C_1 \frac{M^2\vee\sigma^2}{p} (k_1 k_2+n_1 \log k_1 + n_2 \log k_2),$$
with probability at least $1-\exp(-C'(k_1k_2+n_1 \log k_1 + n_2 \log k_2))$.
\end{lemma}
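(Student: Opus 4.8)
The plan is to exploit the defining optimality of $\hat\theta$ together with the fact that $\tilde\theta$ is, by construction, the orthogonal projection of the true signal $\theta$ onto the linear space of matrices that are constant on the blocks induced by the data-dependent clustering $(\hat z_1,\hat z_2)$. Write $W = Y-\theta$, so that $W_{ij} = \theta_{ij}(E_{ij}/p-1)+\epsilon_{ij}E_{ij}/p$ is an independent, mean-zero array. Since $\hat\theta$ is a global minimizer of $\norm{Y-\theta}^2$ over $\Theta_{k_1k_2}(M)$ and $\tilde\theta\in\Theta_{k_1k_2}(M)$ is feasible (its block values lie in $[-M,M]$ because they are averages of entries of $\theta$), we have $\norm{Y-\hat\theta}^2\le\norm{Y-\tilde\theta}^2$. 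Expanding with $Y=\theta+W$ and cancelling $\norm{W}^2$ yields $\norm{\theta-\hat\theta}^2-\norm{\theta-\tilde\theta}^2\le 2\iprod{\hat\theta-\tilde\theta}{W}$.

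Next I would invoke a Pythagorean identity. Because $\tilde\theta$ is the block average of $\theta$ over the partition defined by $(\hat z_1,\hat z_2)$, it is the least-squares projection of $\theta$ onto that block-constant subspace $\mathcal{V}$; hence $\theta-\tilde\theta\perp\mathcal{V}$, while both $\hat\theta$ and $\tilde\theta$ lie in $\mathcal{V}$ so that $\hat\theta-\tilde\theta\in\mathcal{V}$. Consequently $\iprod{\theta-\tilde\theta}{\hat\theta-\tilde\theta}=0$ and $\norm{\theta-\hat\theta}^2=\norm{\theta-\tilde\theta}^2+\norm{\hat\theta-\tilde\theta}^2$. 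Substituting into the previous display collapses it to the clean self-bounding inequality $\norm{\hat\theta-\tilde\theta}^2\le 2\iprod{\hat\theta-\tilde\theta}{W}$.

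It remains to control the linear term. Set $D=\hat\theta-\tilde\theta$, which is constant on each block $(a,b)$ of $(\hat z_1,\hat z_2)$ with common value $d_{ab}$, and write $n_{ab}=|\hat z_1^{-1}(a)||\hat z_2^{-1}(b)|$ and $S_{ab}=\sum_{(i,j):\hat z_1(i)=a,\hat z_2(j)=b}W_{ij}$. Then $\iprod{D}{W}=\sum_{a,b}d_{ab}S_{ab}$, and Cauchy--Schwarz against $\norm{D}^2=\sum_{a,b}d_{ab}^2 n_{ab}$ gives $\iprod{D}{W}\le\norm{D}\big(\sum_{a,b}S_{ab}^2/n_{ab}\big)^{1/2}$. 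Combined with the self-bounding inequality and dividing by $\norm{D}$ yields $\norm{\hat\theta-\tilde\theta}^2\le 4\sum_{a,b}S_{ab}^2/n_{ab}$; since the clustering is data-dependent I would then pass to the uniform bound $\norm{\hat\theta-\tilde\theta}^2\le 4\sup_{z_1\in[k_1]^{n_1},\,z_2\in[k_2]^{n_2}}\sum_{a,b}S_{ab}(z_1,z_2)^2/n_{ab}(z_1,z_2)$.

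The main obstacle is this last uniform deviation bound, which is where the $n_1\log k_1+n_2\log k_2$ term is born. For a fixed clustering, $\sum_{a,b}S_{ab}^2/n_{ab}$ is a sum of $k_1k_2$ squared, normalized partial sums of the independent array $W$; using $\text{Var}(W_{ij})\lesssim(M^2\vee\sigma^2)/p$ and the sub-exponential tail of $W_{ij}$ (scale $\lesssim(M\vee\sigma)/p$, arising from the $1/p$ factor and the Bernoulli missingness), a Bernstein/chi-square-type large deviation bound gives a tail of order $\exp(-c\,t\,p/(M^2\vee\sigma^2))$ for $t\gtrsim(M^2\vee\sigma^2)k_1k_2/p$. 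I would then union bound over the $k_1^{n_1}k_2^{n_2}$ clusterings, whose log-cardinality is exactly $n_1\log k_1+n_2\log k_2$; choosing $t\asymp\frac{M^2\vee\sigma^2}{p}(k_1k_2+n_1\log k_1+n_2\log k_2)$ makes the exponent dominate the entropy and leaves the stated failure probability $\exp(-C'(n_1\log k_1+n_2\log k_2))$. The delicate point is verifying that the sub-exponential (Bernstein) tail, rather than a heavier one, governs the deviation over the entire relevant range of $t$ uniformly in the block sizes; the a priori bound $\norm{D}_{\infty}\le 2M$ on the increment and a careful split of the tail into its quadratic and linear regimes are what make this step go through, and I expect this to be the technically heaviest part of the argument.
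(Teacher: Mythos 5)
Your opening moves are sound and in fact more streamlined than the paper's: $\tilde\theta$ is feasible (its block values are averages of entries of $\theta$, hence in $[-M,M]$), the basic inequality $\norm{Y-\hat\theta}^2\le\norm{Y-\tilde\theta}^2$ holds, and the Pythagorean identity is valid because block-averaging is exactly the orthogonal projection onto the subspace of matrices constant on the blocks of $(\hat z_1,\hat z_2)$, a subspace containing both $\hat\theta$ and $\tilde\theta$. This gives you $\norm{\hat\theta-\tilde\theta}^2\le 2\iprod{\hat\theta-\tilde\theta}{W}$ without ever invoking the closed form of the minimizer. The paper instead goes through Proposition \ref{prop:ls}: for the optimal clustering, $\hat Q_{ab}$ is the \emph{truncated} block mean of $Y$, whence the entrywise bound $|\hat\theta_{ij}-\tilde\theta_{ij}|\le |\bar W_{ab}(\hat z)|\wedge 2M$, and the lemma reduces to controlling $\max_z\sum_{a,b}n_1(a)n_2(b)\bigl(|\bar W_{ab}(z)|\wedge\tau\bigr)^2$ — note the truncation at $\tau$ \emph{inside} the square.

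The gap is in your final reduction $\norm{\hat\theta-\tilde\theta}^2\le 4\sup_{z}\sum_{a,b}S_{ab}(z)^2/n_{ab}(z)$ with the \emph{untruncated} partial sums. Because of the factor $E_{ij}/p$, the entries $W_{ij}$ are only sub-exponential (scale $(M\vee\sigma)/p$), so $\E \exp\bigl(\lambda S_{ab}^2/n_{ab}\bigr)=\infty$ for every $\lambda>0$ and no Chernoff-plus-union-bound argument over the $k_1^{n_1}k_2^{n_2}$ clusterings can control this supremum; worse, singleton blocks are allowed, so $\sup_z\sum_{a,b}S_{ab}^2/n_{ab}\ge\max_{ij}W_{ij}^2\asymp (M^2+\sigma^2\log(n_1n_2))/p^2$, which for small $p$ exceeds the target rate — the intermediate inequality you would need is not merely hard to prove but can be false. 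The a priori bound $\norm{D}_\infty\le 2M$ cannot repair this \emph{after} Cauchy--Schwarz, since by then the constraint on $d_{ab}$ has been discarded. The fix is to split $\iprod{D}{W}=\sum_{a,b}d_{ab}n_{ab}\bar W_{ab}$ block by block \emph{before} applying Cauchy--Schwarz: on blocks with $|\bar W_{ab}|\le\tau$ apply Cauchy--Schwarz to get $\norm{D}\bigl(\sum V_{ab}^2\bigr)^{1/2}$ with $V_{ab}=\sqrt{n_{ab}}\,|\bar W_{ab}|\mathbf{1}\{|\bar W_{ab}|\le\tau\}$, and on blocks with $|\bar W_{ab}|>\tau$ use $|d_{ab}|\le 2M\le\tau$ to get $\sum\tau n_{ab}|\bar W_{ab}|\mathbf{1}\{|\bar W_{ab}|>\tau\}$; both pieces have finite exponential moments at scale $p/(M^2\vee\sigma^2)$ (this is precisely Lemma \ref{lm:subexV}) and then the union bound closes. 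This quadratic/linear split is exactly what the paper's Lemma \ref{lem:partition2} carries out, there via the explicit identity $|\hat Q_{ab}-\bar\theta_{ab}|\le|\bar W_{ab}|\wedge 2M$ from Proposition \ref{prop:ls}. You name the right ingredients at the end, but as written the split arrives one step too late to be usable.
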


\begin{lemma} \label{lem:partition1}
For any constant $C'>0$, there exists a constant $C_2>0$ only depending on $C'$, such that the inequality $\norm{\tilde{\theta}-\theta}^2 \ge C_2 (M^2 \vee \sigma^2) (\dimnone \log \dimkone + \dimntwo \log \dimktwo)/p$ implies
$$\left|\iprod{\frac{\tilde{\theta}-\theta}{\norm{\tilde{\theta}-\theta}}}{Y-\theta}\right| \leq \sqrt{C_2\frac{M^2\vee\sigma^2}{p}(k_1k_2+\dimnone \log \dimkone + \dimntwo \log \dimktwo)},$$
with probability at least $1-\exp(-C'(k_1k_2+\dimnone \log \dimkone + \dimntwo \log \dimktwo))$.
\end{lemma}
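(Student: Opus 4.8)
The plan is to prove a one-sided bound that controls the inner product $\iprod{\frac{\tilde\theta-\theta}{\norm{\tilde\theta-\theta}}}{Y-\theta}$ uniformly over all candidate directions, where the direction $\frac{\tilde\theta-\theta}{\norm{\tilde\theta-\theta}}$ is determined by the (random) estimated labels $(\hat z_1,\hat z_2)$. Observe first that $Y_{ij}-\theta_{ij} = X_{ij}E_{ij}/p - \theta_{ij}$, so $\ep(Y_{ij}-\theta_{ij})=0$ and each coordinate of $Y-\theta$ is a centered random variable combining the sub-Gaussian noise $\epsilon_{ij}$ and the Bernoulli sampling fluctuation through $E_{ij}/p$. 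The quantity $\iprod{v}{Y-\theta}$ for a fixed unit vector $v$ is therefore a weighted sum of independent centered variables; the main task is to make this bound uniform over the data-dependent direction.

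First I would reduce the problem to a union bound over the combinatorial structure. The matrix $\tilde\theta$ depends on the data only through the label pair $(\hat z_1,\hat z_2)$, since $\tilde\theta$ is the block-average of the true $\theta$ over the blocks cut out by $(\hat z_1,\hat z_2)$. Thus, although $\tilde\theta-\theta$ is random, it ranges over a set indexed by $(z_1,z_2)\in[k_1]^{n_1}\times[k_2]^{n_2}$, of cardinality $k_1^{n_1}k_2^{n_2}$. For each \emph{fixed} label pair the direction $v=v(z_1,z_2)$ is deterministic, and I would apply a sub-Gaussian concentration bound (a Bernstein- or Hoeffding-type tail for the weighted sum $\iprod{v}{Y-\theta}$, whose effective variance proxy scales like $(M^2\vee\sigma^2)/p$ because of the $1/p$ reweighting) to show $|\iprod{v}{Y-\theta}|$ exceeds the stated threshold with probability at most $\exp(-C''\,t)$ where $t=k_1k_2+n_1\log k_1+n_2\log k_2$. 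Taking a union bound over the $\exp(n_1\log k_1+n_2\log k_2)$ label pairs leaves the exponent $-C'(k_1k_2+n_1\log k_1+n_2\log k_2)$ with room to spare, provided the constant $C_2$ in the threshold is chosen large relative to $C'$.

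The role of the hypothesis $\norm{\tilde\theta-\theta}^2\ge C_2(M^2\vee\sigma^2)(n_1\log k_1+n_2\log k_2)/p$ is to guarantee that, after normalizing by $\norm{\tilde\theta-\theta}$, the fluctuation budget $k_1k_2+n_1\log k_1+n_2\log k_2$ in the numerator of the tail bound dominates and the deviation inequality is not vacuous; this condition lets me absorb the normalization while keeping the exponent of the correct order. Concretely, for each fixed $(z_1,z_2)$ I would bound the $\psi_2$-norm of $\iprod{\tilde\theta-\theta}{Y-\theta}$ by $C\sqrt{(M^2\vee\sigma^2)/p}\,\norm{\tilde\theta-\theta}$, then divide through by $\norm{\tilde\theta-\theta}$ and invoke the tail.

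The hard part will be handling the dependence introduced by $E_{ij}/p$ cleanly: unlike pure sub-Gaussian noise, the sampling term $(E_{ij}/p)X_{ij}$ is heavy-tailed when $p$ is small, so a naive sub-Gaussian tail is unavailable and I expect to need a Bernstein-type inequality that separates a sub-Gaussian (variance) regime from a sub-exponential (large-deviation) regime, with the $1/p$ factor appearing in the variance proxy. Getting the variance proxy to come out as $(M^2\vee\sigma^2)/p$ rather than something worse, and confirming that the sub-exponential term is dominated by the sub-Gaussian term under the stated lower bound on $\norm{\tilde\theta-\theta}^2$, is the crux; once that inequality is in hand, the union bound is routine. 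In the symmetric case one additionally restricts to super-diagonal entries and checks that the diagonal zeros do not disturb the variance accounting, which changes constants but not the order of the bound.
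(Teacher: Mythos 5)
Your proposal matches the paper's argument: reduce to a union bound over the $k_1^{n_1}k_2^{n_2}$ label configurations (since $\tilde{\theta}$ depends on the data only through $(\hat{z}_1,\hat{z}_2)$), apply a Bernstein-type tail bound for sums of centered sub-Gaussian--Bernoulli products with variance proxy of order $(M^2\vee\sigma^2)/p$, and use the lower bound on $\norm{\tilde{\theta}-\theta}^2$ to control $\norm{\gamma(z)}_\infty$ so that the sub-Gaussian regime of the tail dominates at the chosen threshold. You have correctly identified the crux (the mixed sub-Gaussian/sub-exponential tail and the role of the hypothesis), and the paper's Lemma 5.1 supplies exactly the inequality you anticipate needing.
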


\begin{lemma} \label{lem:partition2}
For any constant $C'>0$, there exists a constant $C_3>0$ only depending on $C'$, such that
$$\left|\iprod{\hat{\theta}-\tilde{\theta}}{Y-\theta}\right| \leq C_3 \frac{M^2\vee\sigma^2}{p} (\dimkone\dimktwo+\dimnone \log \dimkone + \dimntwo \log \dimktwo),$$
with probability at least $1-\exp(-C'(k_1k_2+\dimnone \log \dimkone + \dimntwo \log \dimktwo))$.
\end{lemma}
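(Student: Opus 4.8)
The plan is to exploit that $\hat{\theta}-\tilde{\theta}$ is block-constant and to use the optimality of $\hat{\theta}$ in a self-bounding fashion. Both $\hat{\theta}$ and $\tilde{\theta}$ take constant values on each block $\hat{z}_1^{-1}(a)\times\hat{z}_2^{-1}(b)$, so $\delta := \hat{\theta}-\tilde{\theta}$ lies in the subspace $V(\hat{z}_1,\hat{z}_2)$ of matrices that are constant on the blocks of $(\hat{z}_1,\hat{z}_2)$, whose dimension is at most $k_1k_2$; moreover $\tilde{\theta}$ is exactly the orthogonal projection of $\theta$ onto this subspace, so $\theta-\tilde{\theta}\perp\delta$. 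Since $\tilde{\theta}$ takes block-averages of entries of $\theta\in[-M,M]$, it again lies in $\Theta_{k_1k_2}(M)$, so optimality of $\hat{\theta}$ in \eqref{eq:LS} gives $\norm{Y-\hat{\theta}}^2\le\norm{Y-\tilde{\theta}}^2$. Expanding this around $\tilde\theta$ and using $\iprod{\delta}{\theta-\tilde{\theta}}=0$ yields the basic inequality $\norm{\delta}^2\le 2\iprod{\delta}{W}$ with $W := Y-\theta$.

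Writing $P_{z_1z_2}$ for the orthogonal projection onto $V(z_1,z_2)$ and using $\delta\in V(\hat{z}_1,\hat{z}_2)$, Cauchy--Schwarz gives $\iprod{\delta}{W}=\iprod{\delta}{P_{\hat{z}_1\hat{z}_2}W}\le\norm{\delta}\,\sup_{z_1,z_2}\norm{P_{z_1z_2}W}$. Combined with $\norm{\delta}^2\le 2\iprod{\delta}{W}$ this forces $\norm{\delta}\le 2\sup_{z_1,z_2}\norm{P_{z_1z_2}W}$, and hence $|\iprod{\delta}{W}|\le 2\big(\sup_{z_1,z_2}\norm{P_{z_1z_2}W}\big)^2$. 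The entire lemma therefore reduces to the single uniform deviation bound $\sup_{z_1,z_2}\norm{P_{z_1z_2}W}^2\lesssim\frac{M^2\vee\sigma^2}{p}(k_1k_2+n_1\log k_1+n_2\log k_2)$ on an event of probability at least $1-\exp(-C'N)$, where $N := k_1k_2+n_1\log k_1+n_2\log k_2$. Note that routing through the optimality argument rather than through Lemma \ref{lem:average} is what lets me match the sharper probability $1-\exp(-C'N)$ claimed here.

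For a fixed partition, $\norm{P_{z_1z_2}W}=\sup_{u\in V(z_1,z_2),\,\norm{u}=1}\iprod{u}{W}$, and I would discretize the unit sphere of the ($\le k_1k_2$)-dimensional space $V(z_1,z_2)$ by a $1/2$-net of size at most $5^{k_1k_2}$. For a fixed block-constant unit vector $u$, $\iprod{u}{W}=\sum_{ij}u_{ij}W_{ij}$ is a sum of independent, mean-zero terms $W_{ij}=\theta_{ij}(E_{ij}/p-1)+\epsilon_{ij}E_{ij}/p$; conditioning on $E_{ij}$ and invoking \eqref{eq:sub-G} bounds the moment generating function of $W_{ij}$, exhibiting it as sub-exponential with variance proxy of order $(M^2\vee\sigma^2)/p$ and scale of order $(M\vee\sigma)/p$, the $1/p$ inflation coming from the missingness. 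A Bernstein inequality then gives $\Prob(|\iprod{u}{W}|>t)\le 2\exp\big(-c\min\{t^2p/(M^2\vee\sigma^2),\,tp/((M\vee\sigma)\norm{u}_\infty)\}\big)$. Taking a union bound over the net ($5^{k_1k_2}$ points) and over all partitions ($k_1^{n_1}k_2^{n_2}=\exp(n_1\log k_1+n_2\log k_2)$) contributes a log-cardinality of order $N$, and calibrating $t^2\asymp(M^2\vee\sigma^2)N/p$ makes the sub-Gaussian branch of the exponent equal to $cN$, which beats the cardinality for $C'$ large and yields the claimed bound. The symmetric case follows on taking $k_1=k_2=k$, working on the upper triangle so that $W_{ij}=W_{ji}$ is respected, and discarding the zero diagonal.

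The delicate point, and the step I expect to be the main obstacle, is the sub-exponential (linear) branch of the Bernstein tail on partitions with tiny blocks: a block of size $O(1)$ allows a coordinate of the unit vector $u$ to be of order one, so $\norm{u}_\infty$ is not small and $\iprod{u}{W}$ can inherit the full $1/p$-scaled tail of a single $W_{ij}$. The real work is to verify that at the calibrated radius $t$ the sub-Gaussian branch still dominates after the union bound---equivalently, that the small blocks contribute negligibly---which requires the regime-splitting Bernstein estimate above rather than a crude bound through $\norm{u}_\infty$. Once this is in place, the remaining steps, namely handling empty or unequal block sizes and passing from the net back to the supremum, are routine.
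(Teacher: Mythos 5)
Your reduction is correct as far as it goes: $\tilde{\theta}$ is indeed the orthogonal projection of $\theta$ onto the block-constant subspace $V(\hat{z}_1,\hat{z}_2)$ and lies in $\Theta_{k_1k_2}(M)$, so the basic inequality $\norm{\delta}^2\le 2\iprod{\delta}{W}$ and the bound $|\iprod{\delta}{W}|\le 2\sup_{z}\norm{P_{z}W}^2$ are valid. The gap is that the remaining claim, $\sup_{z}\norm{P_{z}W}^2\lesssim \frac{M^2\vee\sigma^2}{p}N$ with probability $1-\exp(-C'N)$, cannot be proved by your net-plus-Bernstein scheme, and is in fact false for small $p$. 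For the proof: a unit vector $u\in V(z)$ supported near a singleton block has $\norm{u}_\infty\asymp 1$, so at the calibrated radius $t\asymp\sqrt{(M^2\vee\sigma^2)N/p}$ the linear branch of your Bernstein bound gives exponent $tp/(M\vee\sigma)\asymp\sqrt{pN}$, which is $\ll N$ for every $p\le 1$ once $N$ is large; the union bound over $\exp(cN)$ partitions and net points therefore cannot be closed, and this is not a removable technicality of "small blocks contributing negligibly." For the statement itself: $\sup_z\norm{P_zW}^2\ge\max_{ij}W_{ij}^2$, and on the event that an entry with $\theta_{ij}\asymp M$ is observed, $W_{ij}^2\gtrsim M^2/p^2$, which exceeds $\frac{M^2\vee\sigma^2}{p}N$ whenever $p\lesssim 1/N$. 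So the quantity you reduced to is genuinely larger than the lemma's bound in part of the parameter range, and no refinement of the concentration step can recover the lemma from that reduction.

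The ingredient you are missing is the explicit form of the optimizer. By Proposition \ref{prop:ls}, $\hat{Q}_{ab}=\text{sign}(\bar{Y}_{ab}(\hat z))(|\bar{Y}_{ab}(\hat z)|\wedge M)$ is the block average of $Y$ truncated to $[-M,M]$, which yields the pointwise inequality $\bigl(\hat{Q}_{ab}-\bar{\theta}_{ab}(\hat z)\bigr)\bar{W}_{ab}(\hat z)\le |\bar{W}_{ab}(\hat z)|^2\wedge\tau|\bar{W}_{ab}(\hat z)|$ with $\tau\asymp M\vee\sigma$ (an $O(1)$ scale, not $1/p$). The paper then writes $\iprod{\hat\theta-\tilde\theta}{Y-\theta}$ as a sum over the $k_1k_2$ blocks, splits each term into a truncated part $V_{ab}^2$ and an overflow part $R_{ab}$, and shows via Lemma \ref{lm:subexV} that $\E e^{pV_{ab}^2/(8(M^2+2\sigma^2))}$ and $\E e^{pR_{ab}/(8(M^2+2\sigma^2))}$ are bounded by absolute constants uniformly over block sizes; Markov plus a union bound over partitions alone (no net over the unit sphere is needed, since for fixed $z$ only the $k_1k_2$ block averages $\bar{W}_{ab}(z)$ enter) gives the exponent $\asymp N$ for every $p\in(0,1]$. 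It is precisely this truncation at $M$, built into the constraint $Q\in[-M,M]^{k_1\times k_2}$, that tames the $1/p$-scaled tails on small blocks; your passage to $\sup_z\norm{P_zW}$ discards it.
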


\begin{proof}[Proof of Theorem \ref{thm:main}]
Applying union bound, the results of Lemma \ref{lem:average}-\ref{lem:partition2} hold with probability at least $1-3\exp\left(-C'(k_1k_2+n_1\log k_1+n_2\log k_2)\right)$.
We consider the following two cases.
\paragraph{Case 1:}
$\norm{\tilde{\theta}-\theta}^2 \le C_2 (M^2 \vee \sigma^2) (k_1k_2+\dimnone \log \dimkone + \dimntwo \log \dimktwo)/p$. \\
Then we have
$$\norm{\hat{\theta}-\theta}^2\leq 2\norm{\hat{\theta}-\tilde{\theta}}^2+2\norm{\tilde{\theta}-\theta}^2\leq 2(C_1+C_2) \frac{M^2 \vee \sigma^2}{p} (k_1k_2+\dimnone \log \dimkone + \dimntwo \log \dimktwo)$$
by Lemma \ref{lem:average}.
\paragraph{Case 2:}
$\norm{\tilde{\theta}-\theta}^2 > C_2 (M^2 \vee \sigma^2) (k_1k_2+\dimnone \log \dimkone + \dimntwo \log \dimktwo)/p$. \\
By the definition of the estimator, we have $\norm{\hat{\theta}-Y}^2\leq\norm{\theta-Y}^2$. After rearrangement, we have
\begin{eqnarray*}
\nonumber \norm{\hat{\theta}-\theta}^2 &\leq& 2\iprod{\hat{\theta}-\theta}{Y-\theta} \\
\nonumber &=& 2 \iprod{\hat{\theta}-\tilde{\theta}}{Y-\theta}
+ 2 \iprod{\tilde{\theta}-\theta}{Y-\theta} \\
\nonumber &\leq& 2 \iprod{\hat{\theta}-\tilde{\theta}}{Y-\theta} + 2 \norm{\tilde{\theta}-\theta} \left|\iprod{\frac{\tilde{\theta}-\theta}{\norm{\tilde{\theta}-\theta}}}{Y-\theta}\right| \\
\nonumber &\le& 2 \iprod{\hat{\theta}-\tilde{\theta}}{Y-\theta} + 2 (\norm{\tilde{\theta}-\hat{\theta}} + \norm{\hat{\theta} - \theta}) \left|\iprod{\frac{\tilde{\theta}-\theta}{\norm{\tilde{\theta}-\theta}}}{Y-\theta}\right|  \\
&\leq& 2(C_2+C_3+\sqrt{C_1C_2})\frac{M^2 \vee \sigma^2}{p} (k_1k_2+\dimnone \log \dimkone + \dimntwo \log \dimktwo)+\frac{1}{2}\norm{\hat{\theta}-\theta}^2,
\end{eqnarray*}
which leads to the bound
$$\norm{\hat{\theta}-\theta}^2\leq 4(C_2+C_3+\sqrt{C_1C_2})\frac{M^2 \vee \sigma^2}{p} (k_1k_2+\dimnone \log \dimkone + \dimntwo \log \dimktwo).$$
Combining the two cases, we have
$$\norm{\hat{\theta}-\theta}^2\leq C\frac{M^2 \vee \sigma^2}{p} (k_1k_2+\dimnone \log \dimkone + \dimntwo \log \dimktwo),$$
with probability at least $1-3\exp\left(-C'(k_1k_2+n_1\log k_1+n_2\log k_2)\right)$ for $C=4(C_2+C_3+\sqrt{C_1C_2})\vee 2(C_1+C_2)$.
\end{proof}

\subsection{Proof of Theorem \ref{thm:adaptive}}

We first present a lemma for the tail behavior of sum of independent products of sub-Gaussian and Bernoulli random variables. Its proof is given in the supplementary material.
\begin{lemma}\label{lm:subexp}
Let $\{X_i\}$ be independent sub-Gaussian random variables with mean $\theta_i \in [-M, M]$ and $\E e^{\lambda (X_i - \theta_i)} \le e^{\lambda^2 \sigma^2/2}$. Let $\{E_i\}$ be independent Bernoulli random variables with mean $p$. Assume $\{X_i\}$ and $\{E_i\}$ are all independent. Then for $|\lambda| \le p/(M \vee \sigma)$ and $Y_i=X_iE_i/p$, we have
$$ \E e^{\lambda (Y_i-\theta_i)} \le  e^{2(M^2+2\sigma^2)\lambda^2/p}.$$
Moreover, for $\sum_{i=1}^{n} c_i^2=1$,
\begin{equation}\label{eq:bernstein1}
\Prob \left\{ \left|\sum_{i=1}^{n} c_{i} (Y_i - \theta_i)\right| \ge t \right\} \le 2 \exp \left \{ - \min \left(\frac{p t^2}{8(M^2+2\sigma^2)}, \frac{pt}{2(M \vee \sigma) \norm{c}_{\infty}} \right) \right \}
\end{equation}
for any $t>0$.
\end{lemma}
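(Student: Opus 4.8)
The plan is to establish the two assertions in sequence: first the single-coordinate moment generating function (MGF) bound, and then the tail bound for the weighted sum via a Chernoff argument exploiting independence.

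For the MGF bound, I would condition on $E_i$. Since $Y_i = X_iE_i/p$ equals $0$ when $E_i=0$ and $X_i/p$ when $E_i=1$, and $E_i$ is independent of $X_i$,
$$\E e^{\lambda(Y_i-\theta_i)} = (1-p)e^{-\lambda\theta_i} + p\,e^{-\lambda\theta_i}\,\E e^{(\lambda/p)X_i}.$$
Applying the sub-Gaussian hypothesis to $X_i$ at parameter $\lambda/p$ gives $\E e^{(\lambda/p)(X_i-\theta_i)}\le e^{\lambda^2\sigma^2/(2p^2)}$. Writing $u=\lambda\theta_i/p$ and rearranging, this becomes
$$\E e^{\lambda(Y_i-\theta_i)} \le \underbrace{(1-p)e^{-pu} + pe^{(1-p)u}}_{=:\,\phi(u)} + p\,e^{(1-p)u}\bigl(e^{\lambda^2\sigma^2/(2p^2)}-1\bigr).$$
The key observation is that $\phi(u)$ is exactly the MGF of the centered Bernoulli $B-p$, $B\sim\text{Bernoulli}(p)$, which is bounded and mean-zero with variance $p(1-p)\le p$.

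The constraint $|\lambda|\le p/(M\vee\sigma)$ is precisely what forces every exponent to be $O(1)$: it gives $|u|\le|\lambda|M/p\le 1$ and $\lambda^2\sigma^2/(2p^2)\le 1/2$. A variance-based Bernstein estimate on the bounded mean-zero variable $B-p$ then yields $\phi(u)\le\exp(Cu^2p)\le\exp(C\lambda^2M^2/p)$, using $u^2p=\lambda^2\theta_i^2/p\le\lambda^2M^2/p$; the correction term is controlled by $e^x-1\le xe^x$ together with $e^{(1-p)u}\le e$, contributing a term of order $\lambda^2\sigma^2/p$. Collecting the two contributions and applying $1+x\le e^x$ produces $\E e^{\lambda(Y_i-\theta_i)}\le 2\exp\{(M^2+2\sigma^2)\lambda^2/p\}$, where the prefactor $2$ and the coefficient $2$ on $\sigma^2$ are deliberate slack absorbing the $O(1)$ constants (such as $e^{3/2}/2$ and $e/2$) from these crude inequalities. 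For the tail bound I would instead retain the sharper, constant-free version $\E e^{\mu(Y_i-\theta_i)}\le\exp\{c(M^2+\sigma^2)\mu^2/p\}$, since the stray factor $2$ must not be chained across a product of $n$ terms.

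For the tail bound, set $S=\sum_i c_i(Y_i-\theta_i)$ with $\sum_i c_i^2=1$. By independence $\E e^{\lambda S}=\prod_i\E e^{\lambda c_i(Y_i-\theta_i)}$; applying the constant-free MGF bound with parameter $\lambda c_i$, valid whenever $|\lambda|\le p/((M\vee\sigma)\norm{c}_\infty)$, and using $\sum_i c_i^2=1$, gives $\E e^{\lambda S}\le\exp\{(M^2+2\sigma^2)\lambda^2/p\}$ on that range. I would then run the standard two-regime Chernoff optimization: the unconstrained minimizer $\lambda^\ast=pt/(2(M^2+2\sigma^2))$ produces the sub-Gaussian exponent $pt^2/(4(M^2+2\sigma^2))$ when it is admissible, and otherwise one takes $\lambda$ at the boundary $p/((M\vee\sigma)\norm{c}_\infty)$ to obtain the sub-exponential exponent $pt/(2(M\vee\sigma)\norm{c}_\infty)$; retaining the smaller of the two exponents yields the $\min(\cdot,\cdot)$ in the statement. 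A union bound over the two one-sided tails $\pm S$ contributes one factor of $2$, and the slack in the MGF estimate accounts for the remaining factor, giving the prefactor $4$.

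The main obstacle is the middle step: extracting an MGF bound of clean sub-exponential form with the correct $1/p$ (rather than $1/p^2$) dependence. The $1/p^2$ that appears naively from $\E e^{(\lambda/p)X_i}$ must be downgraded to $1/p$ by exploiting that the centered Bernoulli factor $\phi(u)$ has variance only $p(1-p)$; this is exactly why a variance-based Bernstein estimate, rather than a range-based Hoeffding estimate, is essential. Once this constant-free quadratic MGF bound is secured, the independence-plus-Chernoff machinery of the final paragraph is entirely routine.
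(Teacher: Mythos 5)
Your proposal is correct and takes essentially the same route as the paper: condition on $E_i$ to write the MGF as a Bernoulli mixture, use the constraint $|\lambda|\le p/(M\vee\sigma)$ to keep every exponent $O(1)$ and extract a bound scaling as $\lambda^2/p$ rather than $\lambda^2/p^2$, then run the two-regime Chernoff optimization; the only difference is that the paper executes the middle step with crude Taylor inequalities ($e^x\le 1+2x$ for $x\ge 0$ and $e^x\le 1+x+2x^2$ for $|x|\le 1$) and term-by-term absorption, whereas you identify the centered-Bernoulli factor $\phi(u)$ and invoke a variance-based bound, which is cleaner but equivalent in effect. Your explicit insistence on carrying a constant-free MGF bound into the product over $i$ is a point where your write-up is actually more careful than the paper's, since the displayed per-coordinate bound $2e^{(M^2+2\sigma^2)\lambda^2/p}$ would naively produce a $2^n$ prefactor if chained directly.
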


\begin{proof}[Proof of Theorem \ref{thm:adaptive}]
Consider the mean matrix $\theta$ that belongs to the space $\Theta_{k_1k_2}(M)$.
By the definition of $(\hat{k}_1,\hat{k}_2,\hat{M})$, we have $\norm{\hat{\theta}_{\hat{k}_1\hat{k}_2\hat{M}}^{\Delta}-Y^{\Delta^c}}^2_{\Delta^c}\leq \norm{\hat{\theta}_{k_1k_2m}^{\Delta}-Y^{\Delta^c}}^2_{\Delta^c}$, where $k_1$ and $k_2$ are true numbers of row and column clusters and $m$ is chosen to be the smallest element in $\mathcal{M}$ that is no smaller than $M$.  After rearrangement, we have
\begin{eqnarray*}
&& \norm{\hat{\theta}_{\hat{k}_1\hat{k}_2\hat{M}}^{\Delta}-\theta}_{\Delta^c}^2 \\
&\leq& \norm{\hat{\theta}_{k_1k_2m}^{\Delta}-\theta}_{\Delta^c}^2 + 2\norm{\hat{\theta}_{\hat{k}_1\hat{k}_2\hat{M}}^{\Delta}-\hat{\theta}_{k_1k_2m}^{\Delta}}_{\Delta^c}\iprod{\frac{\hat{\theta}_{\hat{k}_1\hat{k}_2\hat{M}}^{\Delta}-\hat{\theta}_{k_1k_2m}^{\Delta}}{\norm{\hat{\theta}_{\hat{k}_1\hat{k}_2\hat{M}}^{\Delta}-\hat{\theta}_{k_1k_2m}^{\Delta}}_{\Delta^c}}}{Y^{\Delta^c}-\theta}_{\Delta^c} \\
&\leq& \norm{\hat{\theta}_{k_1k_2m}^{\Delta}-\theta}_{\Delta^c}^2 + 2\norm{\hat{\theta}_{\hat{k}_1\hat{k}_2\hat{M}}^{\Delta}-\hat{\theta}_{k_1k_2m}^{\Delta}}_{\Delta^c}\max_{(l_1,l_2)\in[n_1]\times [n_2]}\left|\iprod{\frac{\hat{\theta}_{l_1l_2h}^{\Delta}-\hat{\theta}_{k_1k_2m}^{\Delta}}{\norm{\hat{\theta}_{l_1l_2h}^{\Delta}-\hat{\theta}_{k_1k_2m}^{\Delta}}_{\Delta^c}}}{Y^{\Delta^c}-\theta}_{\Delta^c}\right|.
\end{eqnarray*}
By Lemma \ref{lm:subexp} and the independence structure, we have
$$\max_{(l_1,l_2,h)\in[n_1]\times [n_2]\times \mathcal{M}}\left|\iprod{\frac{\hat{\theta}_{l_1l_2h}^{\Delta}-\hat{\theta}_{k_1k_2m}^{\Delta}}{\norm{\hat{\theta}_{l_1l_2h}^{\Delta}-\hat{\theta}_{k_1k_2m}^{\Delta}}_{\Delta^c}}}{Y^{\Delta^c}-\theta}_{\Delta^c}\right|\leq C(M\vee\sigma)\frac{\log(n_1+n_2)}{p},$$
with probability at least $1-(n_1n_2)^{-C'}$. Using triangle inequality and Cauchy-Schwarz inequality, we have
$$\norm{\hat{\theta}_{\hat{k}_1\hat{k}_2\hat{M}}^{\Delta}-\theta}_{\Delta^c}^2\leq \frac{3}{2}\norm{\hat{\theta}_{k_1k_2m}^{\Delta}-\theta}_{\Delta^c}^2+\frac{1}{2}\norm{\hat{\theta}_{\hat{k}_1\hat{k}_2\hat{M}}^{\Delta}-\theta}_{\Delta^c}^2+4C^2(M^2\vee\sigma^2)\left(\frac{\log(n_1+n_2)}{p}\right)^2.$$
By rearranging the above inequality, we have
$$\norm{\hat{\theta}_{\hat{k}_1\hat{k}_2\hat{M}}^{\Delta}-\theta}_{\Delta^c}^2\leq 3\norm{\hat{\theta}_{k_1k_2m}^{\Delta}-\theta}_{\Delta^c}^2+8C^2(M^2\vee\sigma^2)\left(\frac{\log(n_1+n_2)}{p}\right)^2.$$
A symmetric argument leads to
$$\norm{\hat{\theta}_{\hat{k}_1\hat{k}_2\hat{M}}^{\Delta^c}-\theta}_{\Delta}^2\leq 3\norm{\hat{\theta}_{k_1k_2m}^{\Delta^c}-\theta}_{\Delta}^2+8C^2(M^2\vee\sigma^2)\left(\frac{\log(n_1+n_2)}{p}\right)^2.$$
Summing up the above two inequalities, we have
\begin{equation}
\norm{\hat{\theta}-\theta}^2\leq 3\norm{\hat{\theta}_{k_1k_2m}^{\Delta}-\theta}^2+3\norm{\hat{\theta}_{k_1k_2m}^{\Delta^c}-\theta}^2+16C^2(M^2\vee\sigma^2)\left(\frac{\log(n_1+n_2)}{p}\right)^2.\label{eq:cena}
\end{equation}
Using Theorem \ref{thm:main} to bound $\norm{\hat{\theta}_{k_1k_2m}^{\Delta}-\theta}^2$ and $\norm{\hat{\theta}_{k_1k_2m}^{\Delta^c}-\theta}^2$ can be bounded by $C\frac{m^2\vee\sigma^2}{p}(k_1k_2+n_1\log  k_1+n_2\log k_2)$. Given that $m=M\left(1+\frac{m-M}{M}\right)\leq 2M$ by the choice of $m$, the proof is complete.
\end{proof}

\subsection{Proof of Theorem \ref{thm:p}}

Recall the augmented data $Y_{ij}=X_{ij}E_{ij}/p$. Define $\mathcal{Y}_{ij}=X_{ij}E_{ij}/\hat{p}$.
Let us give two lemmas to facilitate the proof.
\begin{lemma}\label{lem:aaa}
Assume $p\gtrsim \frac{\log(n_1+n_2)}{n_1n_2}$. For any $C'>0$, there is some constant $C>0$
 such that
$$\norm{Y-\mathcal{Y}}^2\leq C\left[M^2+\sigma^2\log(n_1+n_2)\right]\frac{\log(n_1+n_2)}{p^2},$$
with probability at least $1-(n_1n_2)^{-C'}$.
\end{lemma}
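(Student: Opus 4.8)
The plan is to reduce the whole estimate to two ingredients: a multiplicative control of $\hat p$ against $p$, and a crude bound on $\sum_{ij}X_{ij}^2E_{ij}$. Starting from the definitions $Y_{ij}=X_{ij}E_{ij}/p$ and $\mathcal{Y}_{ij}=X_{ij}E_{ij}/\hat p$, I would write $Y_{ij}-\mathcal{Y}_{ij}=X_{ij}E_{ij}\pth{\frac1p-\frac1{\hat p}}=X_{ij}E_{ij}\frac{\hat p-p}{p\hat p}$, and use $E_{ij}^2=E_{ij}$ to obtain the exact identity
$$\norm{Y-\mathcal{Y}}^2=\pth{\frac{\hat p-p}{p\hat p}}^2\sum_{i,j}X_{ij}^2E_{ij}.$$
It then suffices to bound the scalar prefactor and the sum separately on a common high-probability event and multiply.

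For the prefactor, I would note that $n_1n_2\hat p=\sum_{ij}E_{ij}$ is Binomial$(n_1n_2,p)$, so a Bernstein/Chernoff bound gives $|\hat p-p|\lesssim\sqrt{p\log(n_1+n_2)/(n_1n_2)}$ with probability at least $1-(n_1n_2)^{-C'}$. The hypothesis $p\gtrsim\log(n_1+n_2)/(n_1n_2)$ forces this deviation to be $\lesssim p$, hence $\hat p\in[p/2,2p]$ on this event; in particular $\hat p$ is bounded away from $0$ (so $\mathcal{Y}$ is well defined) and
$$\pth{\frac{\hat p-p}{p\hat p}}^2\leq\frac{4(\hat p-p)^2}{p^4}\lesssim\frac{\log(n_1+n_2)}{n_1n_2\,p^3}.$$

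For the sum, I would use $X_{ij}=\theta_{ij}+\epsilon_{ij}$ and $(a+b)^2\leq2a^2+2b^2$ to split $\sum_{ij}X_{ij}^2E_{ij}\leq2\sum_{ij}\theta_{ij}^2E_{ij}+2\sum_{ij}\epsilon_{ij}^2E_{ij}$. The first piece is at most $2M^2\sum_{ij}E_{ij}=2M^2n_1n_2\hat p\lesssim M^2n_1n_2p$ on the event above. For the noise piece I would use the blunt inequality $\sum_{ij}\epsilon_{ij}^2E_{ij}\leq(\max_{ij}\epsilon_{ij}^2)\sum_{ij}E_{ij}$, together with the maximal bound $\max_{ij}\epsilon_{ij}^2\lesssim\sigma^2\log(n_1+n_2)$ (a union bound over the $n_1n_2$ sub-Gaussian tails), which holds with probability at least $1-(n_1n_2)^{-C'}$. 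This yields, on the good event,
$$\sum_{i,j}X_{ij}^2E_{ij}\lesssim\pth{M^2+\sigma^2\log(n_1+n_2)}n_1n_2\,p.$$
Multiplying this against the prefactor bound, the factor $n_1n_2p$ cancels down to $1/p^2$, and a union bound over the two events gives the claimed estimate with probability at least $1-(n_1n_2)^{-C'}$ (after relabeling $C'$).

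The only genuinely delicate point I anticipate is that $\hat p$ and $\sum_{ij}X_{ij}^2E_{ij}$ both depend on the same $E_{ij}$'s and so are \emph{not} independent; the argument sidesteps this by controlling each factor on its own high-probability event and intersecting, never taking the expectation of the product. The extra $\log(n_1+n_2)$ that multiplies $\sigma^2$ (as opposed to the cleaner $M^2$ term) is precisely the cost of the crude ``maximum times count'' bound on the noise contribution; a sharper sub-exponential Bernstein inequality for $\sum_{ij}\epsilon_{ij}^2E_{ij}$ would remove it, but the stated lemma does not need this refinement.
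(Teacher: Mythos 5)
Your proposal is correct and follows essentially the same route as the paper: both factor $\norm{Y-\mathcal{Y}}^2$ into the prefactor $(\hat p^{-1}-p^{-1})^2$ (controlled by Bernstein's inequality for $\hat p$ under the assumption $p\gtrsim \log(n_1+n_2)/(n_1n_2)$) times a bound of the form $\max_{ij}X_{ij}^2\cdot\sum_{ij}E_{ij}\lesssim (M^2+\sigma^2\log(n_1+n_2))\,n_1n_2p$, using a union bound over the sub-Gaussian tails for the maximum. The only cosmetic differences are that you start from the exact identity via $E_{ij}^2=E_{ij}$ and split $X_{ij}^2\le 2\theta_{ij}^2+2\epsilon_{ij}^2$ before taking the maximum, whereas the paper bounds $\max_{ij}X_{ij}^2$ directly.
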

\begin{lemma}\label{lem:aaaa}
The inequalities in Lemma \ref{lem:average}-\ref{lem:partition2} continue to hold with bounds
$$C_1 \frac{M^2\vee\sigma^2}{p} (k_1 k_2+n_1 \log k_1 + n_2 \log k_2)+2\norm{Y-\mathcal{Y}}^2,$$
$$\sqrt{C_2\frac{M^2\vee\sigma^2}{p}(k_1k_2+\dimnone \log \dimkone + \dimntwo \log \dimktwo)}+\norm{Y-\mathcal{Y}},$$
and
$$C_3 \frac{M^2\vee\sigma^2}{p} (\dimkone\dimktwo+\dimnone \log \dimkone + \dimntwo \log \dimktwo)+\norm{\hat{\theta}-\tilde{\theta}}\norm{Y-\mathcal{Y}},$$
respectively.
\end{lemma}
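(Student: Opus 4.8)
The plan is to establish the three modified inequalities one at a time, in each case conditioning on the same high-probability event on which the corresponding original lemma (Lemma \ref{lem:average}, \ref{lem:partition1}, or \ref{lem:partition2}) holds, and exploiting the single elementary decomposition
$$\mathcal{Y} - \theta = (Y - \theta) + (\mathcal{Y} - Y).$$
The guiding observation is that Lemmas \ref{lem:average}--\ref{lem:partition2} are, at bottom, concentration statements about linear functionals of $Y-\theta$ that hold uniformly over all row/column partitions; passing from $Y$ to $\mathcal{Y}$ only injects the additive perturbation $\mathcal{Y}-Y$, which I will peel off by Cauchy--Schwarz and record as a $\norm{Y-\mathcal{Y}}$ term. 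Since the new $\hat\theta$ (now the minimizer of $\norm{\mathcal{Y}-\theta}^2$) still lies in $\Theta_{k_1k_2}(M)$ and $\tilde\theta$ is still the block-average of $\theta$ over $\hat\theta$'s partition $(\hat z_1,\hat z_2)$, the probability with which each modified bound holds is inherited verbatim from its original counterpart.

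For the analogue of Lemma \ref{lem:partition1}, writing $u = (\tilde\theta-\theta)/\norm{\tilde\theta-\theta}$ for the relevant unit vector, I would split $|\iprod{u}{\mathcal{Y}-\theta}| \le |\iprod{u}{Y-\theta}| + |\iprod{u}{\mathcal{Y}-Y}|$, bound the first summand by the original Lemma \ref{lem:partition1} and the second by $\norm{u}\,\norm{\mathcal{Y}-Y} = \norm{Y-\mathcal{Y}}$. The analogue of Lemma \ref{lem:partition2} is identical with $\hat\theta-\tilde\theta$ in place of $u$: the first term is handled by the original Lemma \ref{lem:partition2} and Cauchy--Schwarz gives $|\iprod{\hat\theta-\tilde\theta}{\mathcal{Y}-Y}| \le \norm{\hat\theta-\tilde\theta}\,\norm{Y-\mathcal{Y}}$, which is precisely the stated extra term. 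Both steps are routine once one notes that the original high-probability bounds are uniform over the data-dependent partition induced by $\hat\theta$, and hence remain valid for the $\mathcal{Y}$-based estimator rather than only the $Y$-based one.

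The analogue of Lemma \ref{lem:average} needs more care and is the main obstacle, because $\norm{\hat\theta-\tilde\theta}$ is not a functional of $Y-\theta$ alone. Here the least-squares optimality of the new $\hat\theta$, together with the box-projection contraction onto $[-M,M]$ used in the original proof, yields only $\norm{\hat\theta - \tilde\theta} \le \norm{P_{\hat z}(\mathcal{Y}-\theta)}$, where $P_{\hat z}$ is the block-averaging operator associated with $(\hat z_1, \hat z_2)$. The key structural fact I would invoke is that $P_{\hat z}$ is the orthogonal projection onto the subspace of matrices constant on the blocks of $(\hat z_1,\hat z_2)$, hence an $\ell_2$-contraction. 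After the decomposition, the triangle inequality then gives
$$\norm{\hat\theta-\tilde\theta} \le \norm{P_{\hat z}(Y-\theta)} + \norm{P_{\hat z}(\mathcal{Y}-Y)} \le \norm{P_{\hat z}(Y-\theta)} + \norm{Y-\mathcal{Y}},$$
where the first term is controlled by the uniform-over-partitions concentration bound underlying the original Lemma \ref{lem:average}. Squaring and applying $(a+b)^2 \le 2a^2 + 2b^2$ produces the stated bound $C_1 \frac{M^2\vee\sigma^2}{p}(k_1k_2+n_1\log k_1+n_2\log k_2) + 2\norm{Y-\mathcal{Y}}^2$, after absorbing the factor of two into $C_1$. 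The only genuinely new ingredient relative to the proof of Theorem \ref{thm:main} is the contraction property of $P_{\hat z}$; everything else reuses the original concentration events unchanged, and the $\norm{Y-\mathcal{Y}}$ terms are deliberately left in place, to be bounded subsequently through Lemma \ref{lem:aaa} in the proof of Theorem \ref{thm:p}.
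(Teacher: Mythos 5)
Your handling of the second and third bounds is exactly the paper's: decompose $\mathcal{Y}-\theta=(Y-\theta)+(\mathcal{Y}-Y)$, apply Cauchy--Schwarz to the perturbation term, and use the fact that the original Lemmas \ref{lem:partition1} and \ref{lem:partition2} hold uniformly over all partitions, so they apply to the partition $\hat z$ selected by the $\mathcal{Y}$-based estimator. No issues there.

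The first bound is where your argument has a genuine gap. You pass from $\norm{\hat\theta-\tilde\theta}\le\norm{P_{\hat z}(\mathcal{Y}-\theta)}$ to $\norm{P_{\hat z}(Y-\theta)}+\norm{Y-\mathcal{Y}}$ and assert that $\norm{P_{\hat z}(Y-\theta)}^2$ is controlled by ``the uniform-over-partitions concentration bound underlying Lemma \ref{lem:average}.'' But that bound controls only the truncated quantity $\max_z\sum_{a,b}n_1(a)n_2(b)\big(|\bar{W}_{ab}(z)|\wedge\tau\big)^2$; the truncation (inherited from the pointwise cap $|\hat\theta_{ij}-\tilde\theta_{ij}|\le 2M$ that the clipping of $\hat Q$ to $[-M,M]$ produces) is what makes each summand sub-exponential via Lemma \ref{lm:subexV}(a) and lets the Chernoff-plus-union bound over $k_1^{n_1}k_2^{n_2}$ partitions go through. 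The untruncated $\max_z\norm{P_z(Y-\theta)}^2$ contains terms $n_1(a)n_2(b)\bar{W}_{ab}(z)^2\mathbf{1}\{|\bar{W}_{ab}(z)|>\tau\}$, i.e.\ squares of sub-exponential variables, whose moment generating functions do not exist; this is precisely why Lemma \ref{lem:partition2} handles the large-deviation regime through the linear-in-$|\bar{W}_{ab}|$ quantity $R_{ab}$ rather than the quadratic one. Your triangle-inequality step discards the one structural feature that makes the concentration argument work. The gap is repairable in two ways: either carry the cap through the decomposition using $(a+b)\wedge 2M\le(a\wedge 2M)+b$ for $a,b\ge 0$, or do what the paper does --- introduce the intermediate estimator $\check\theta_{ij}=\check{Q}_{\hat z_1(i)\hat z_2(j)}$ with $\check{Q}_{ab}=\text{sign}(\bar{Y}_{ab}(\hat z))\left(|\bar{Y}_{ab}(\hat z)|\wedge M\right)$, observe that $\norm{\check\theta-\tilde\theta}^2$ is exactly the quantity already bounded in the proof of Lemma \ref{lem:average}, bound $\norm{\hat\theta-\check\theta}^2\le\norm{Y-\mathcal{Y}}^2$ by composing the two contractions (clipping to $[-M,M]$, then block-averaging via Jensen), and conclude with $\norm{\hat\theta-\tilde\theta}^2\le 2\norm{\hat\theta-\check\theta}^2+2\norm{\check\theta-\tilde\theta}^2$.
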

\begin{proof}[Proof of Theorem \ref{thm:p}]
The proof is similar to that of Theorem \ref{thm:main}. We only need to replace Lemma \ref{lem:average}-\ref{lem:partition2} by Lemma \ref{lem:aaa} and Lemma \ref{lem:aaaa} to get the desired result.
\end{proof}

\subsection{Proofs of Theorem \ref{thm:lower-Gaussian} and Theorem \ref{thm:lower-SBM}}

This section gives proofs of the minimax lower bounds. We first introduce some notation. For any probability measures $\mathbb{P},\mathbb{Q}$, define the Kullback--Leibler divergence by $D(\mathbb{P}||\mathbb{Q})=\int\left(\log\frac{d\mathbb{P}}{d\mathbb{Q}}\right)d\mathbb{P}$. The chi-squared divergence is defined by $\chi^2(\mathbb{P}||\mathbb{Q})=\int\left(\frac{d\mathbb{P}}{d\mathbb{Q}}\right)d\mathbb{P}-1$. The main tool we will use is the following proposition.
\begin{proposition} \label{prop:fano}
Let $(\Xi,\ell)$ be a metric space and $\{\mathbb{P}_{\xi}:\xi\in\Xi\}$ be a collection of probability measures. For any totally bounded $T\subset\Xi$, define the Kullback-Leibler diameter and the chi-squared diameter of $T$ by
$$d_{\text{KL}}(T)=\sup_{\xi,\xi'\in T}D(\mathbb{P}_{\xi}||\mathbb{P}_{\xi'}),\quad d_{\chi^2}(T)=\sup_{\xi,\xi'\in T}\chi^2(\mathbb{P}_{\xi}||\mathbb{P}_{\xi'}).$$
Then
\begin{eqnarray}
\label{eq:fanoKL}&&\inf_{\hat{\xi}}\sup_{\xi\in\Xi}\mathbb{P}_{\xi}\left\{\ell^2\Big(\hat{\xi}(X),\xi\Big)\geq\frac{\epsilon^2}{4}\right\} \geq 1-\frac{d_{\text{KL}}(T)+\log 2}{\log\mathcal{M}(\epsilon,T,\ell)}, \\
\label{eq:fanochi2}&&\inf_{\hat{\xi}}\sup_{\xi\in\Xi}\mathbb{P}_{\xi}\left\{\ell^2\Big(\hat{\xi}(X),\xi\Big)\geq\frac{\epsilon^2}{4}\right\} \geq 1-\frac{1}{\mathcal{M}(\epsilon,T,\ell)}-\sqrt{\frac{d_{\chi^2}(T)}{\mathcal{M}(\epsilon,T,\ell)}},
\end{eqnarray}
for any $\epsilon>0$, where the packing number $\mathcal{M}(\epsilon,T,\ell)$ is the largest number of points in $T$ that are at least $\epsilon$ away from each other.
\end{proposition}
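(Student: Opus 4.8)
The plan is to establish both bounds as instances of the generalized Fano method (Fano-type inequalities via mutual information and their $\chi^2$ analogue). Both inequalities have the same structure: one selects a finite $\epsilon$-packing subset $\{\xi_1,\dots,\xi_N\}\subset T$ with $N=\mathcal{M}(\epsilon,T,\ell)$, embeds the estimation problem into a multiple-testing problem by placing the uniform prior over the packing set, and then lower bounds the minimax risk by the Bayes testing error. The key reduction, standard in this setting, is that for any estimator $\hat\xi$, assigning to it the nearest packing point $\hat{j}=\argmin_j \ell(\hat\xi,\xi_j)$ gives a test such that $\{\hat{j}\neq j\}$ whenever $\ell^2(\hat\xi,\xi_j)<\epsilon^2/4$, because distinct packing points are more than $\epsilon$ apart and the triangle inequality forces $\hat\xi$ closer to the true $\xi_j$ than to any other $\xi_{j'}$. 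Hence
\begin{align*}
\inf_{\hat\xi}\sup_{\xi\in\Xi}\mathbb{P}_\xi\left\{\ell^2(\hat\xi,\xi)\geq\frac{\epsilon^2}{4}\right\}\geq \inf_{\hat{j}}\frac{1}{N}\sum_{j=1}^N\mathbb{P}_{\xi_j}\{\hat{j}\neq j\}.
\end{align*}
It remains to lower bound the average testing error on the right.

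For the KL bound \eqref{eq:fanoKL}, I would apply Fano's inequality directly to the testing error. Under the uniform prior over the $N$ packing points, letting $J$ be uniform on $[N]$ and $X$ drawn from $\mathbb{P}_{\xi_J}$, Fano gives $\frac{1}{N}\sum_j \mathbb{P}_{\xi_j}\{\hat{j}\neq j\}\geq 1-\frac{I(J;X)+\log 2}{\log N}$. The mutual information is controlled by the standard convexity bound $I(J;X)\leq \frac{1}{N^2}\sum_{j,j'}D(\mathbb{P}_{\xi_j}\|\mathbb{P}_{\xi_{j'}})\leq \sup_{\xi,\xi'\in T}D(\mathbb{P}_\xi\|\mathbb{P}_{\xi'})=d_{\text{KL}}(T)$, since each packing point lies in $T$. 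Substituting and recalling $N=\mathcal{M}(\epsilon,T,\ell)$ yields \eqref{eq:fanoKL}.

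For the $\chi^2$ bound \eqref{eq:fanochi2}, the route through mutual information is replaced by a direct second-moment argument on the likelihood ratios, which is the more delicate step. Writing $\bar{\mathbb{P}}=\frac{1}{N}\sum_{j'}\mathbb{P}_{\xi_{j'}}$ for the mixture and analyzing the Bayes test, one bounds the error probability using the average $\chi^2$-divergence between each component and the mixture; the $\chi^2$ diameter $d_{\chi^2}(T)$ enters through $\chi^2(\mathbb{P}_{\xi_j}\|\bar{\mathbb{P}})\leq \frac{1}{N}\sum_{j'}\chi^2(\mathbb{P}_{\xi_j}\|\mathbb{P}_{\xi_{j'}})$ by convexity of $\chi^2$ in its second argument, each term being at most $d_{\chi^2}(T)$. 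A Cauchy--Schwarz step then converts the resulting second moment into the $\sqrt{d_{\chi^2}(T)/N}$ term, while the $1/N$ term accounts for the chance of hitting the correct index under the mixture. I expect the main obstacle to be the $\chi^2$ bound: unlike the KL case, there is no clean information-theoretic black box, so one must carefully track the mixture likelihood ratio and apply Cauchy--Schwarz at the right place to produce exactly the $\frac{1}{N}+\sqrt{d_{\chi^2}(T)/N}$ form rather than a cruder bound. Both inequalities are essentially known (they appear in the minimax testing literature, e.g.\ in the style of Tsybakov and of the generalized Fano arguments used in \cite{gao2014rate}), so I would cite the relevant source for the $\chi^2$ version and verify that the diameter-based constants match.
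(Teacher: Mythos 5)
Your proposal is correct and matches the paper exactly in spirit: the paper gives no proof of this proposition at all, simply attributing \eqref{eq:fanoKL} to Yu (1997) and \eqref{eq:fanochi2} to Guntuboyina (2011), and your sketch accurately reconstructs the standard arguments behind those citations (packing-to-testing reduction via the triangle inequality, Fano's inequality with the convexity bound $I(J;X)\leq \frac{1}{N^2}\sum_{j,j'}D(\mathbb{P}_{\xi_j}\|\mathbb{P}_{\xi_{j'}})\leq d_{\text{KL}}(T)$, and the centered Cauchy--Schwarz argument that produces the $\frac{1}{N}+\sqrt{d_{\chi^2}(T)/N}$ form). One trivial wording slip: you wrote ``$\{\hat{j}\neq j\}$ whenever $\ell^2(\hat\xi,\xi_j)<\epsilon^2/4$'' where you mean the reverse inclusion $\{\hat{j}\neq j\}\subseteq\{\ell^2(\hat\xi,\xi_j)\geq\epsilon^2/4\}$, which is what your displayed inequality actually uses.
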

The inequality (\ref{eq:fanoKL}) is the classical Fano's inequality. The version we present here is by \cite{yu97}. The inequality (\ref{eq:fanochi2}) is a generalization of the classical Fano's inequality by using chi-squared divergence instead of KL divergence. It is due to \cite{guntuboyina11}. 

The following proposition bounds the KL divergence and the chi-squared divergence for both Gaussian and Bernoulli models.
\begin{proposition}\label{prop:KLchi}
For the Gaussian model, we have
$$D\left(\mathbb{P}_{(\theta,\sigma^2,p)}||\mathbb{P}_{(\theta',\sigma^2,p)}\right)\leq\frac{p}{2\sigma^2}\norm{\theta-\theta'}^2,\quad \chi^2\left(\mathbb{P}_{(\theta,\sigma^2,p)}||\mathbb{P}_{(\theta',\sigma^2,p)}\right)\leq \exp\left(\frac{p}{\sigma^2}\norm{\theta-\theta'}^2\right)-1.$$
For the Bernoulli model with any $\theta,\theta'\in[\rho/2,3\rho/4]^{n_1\times n_2}$, we have
$$D\left(\mathbb{P}_{(\theta,p)}||\mathbb{P}_{(\theta',p)}\right)\leq \frac{8p}{\rho}\norm{\theta-\theta'}^2,\quad \chi^2\left(\mathbb{P}_{(\theta,p)}||\mathbb{P}_{(\theta',p)}\right)\leq\exp\left(\frac{8p}{\rho}\norm{\theta-\theta'}^2\right)-1.$$ 
\end{proposition}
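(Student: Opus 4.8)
The plan is to exploit two structural features of the model: the entries are mutually independent, and at each coordinate the observation is a mixture of a \emph{missing} outcome (probability $1-p$, identical under both parameters) and a \emph{revealed} outcome (probability $p$, governed by the per-entry sampling law). Independence makes both divergences tensorize, so that $D(\mathbb{P}_\theta\|\mathbb{P}_{\theta'})=\sum_{ij}D(\mathbb{P}_{\theta_{ij}}\|\mathbb{P}_{\theta'_{ij}})$ and $1+\chi^2(\mathbb{P}_\theta\|\mathbb{P}_{\theta'})=\prod_{ij}(1+\chi^2(\mathbb{P}_{\theta_{ij}}\|\mathbb{P}_{\theta'_{ij}}))$; it therefore suffices to analyze a single coordinate and then recombine.

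First I would dispose of the missing-data mechanism. At one coordinate the law is $(1-p)\delta_{\mathrm{miss}}+p\,\mu_{\theta_{ij}}$, where $\mu_\theta=N(\theta,\sigma^2)$ in the Gaussian case and $\mu_\theta=\mathrm{Bernoulli}(\theta)$ in the binary case. Because the atom at $\mathrm{miss}$ carries the same mass $1-p$ under both parameters, it drops out of the likelihood ratio, and a one-line computation gives $D(\mathbb{P}_{\theta_{ij}}\|\mathbb{P}_{\theta'_{ij}})=p\,D(\mu_{\theta_{ij}}\|\mu_{\theta'_{ij}})$ and $\chi^2(\mathbb{P}_{\theta_{ij}}\|\mathbb{P}_{\theta'_{ij}})=p\,\chi^2(\mu_{\theta_{ij}}\|\mu_{\theta'_{ij}})$. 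This reduces the problem to the classical single-parameter divergences, each carrying the sampling factor $p$.

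For the Gaussian case these are textbook: $D(N(\theta,\sigma^2)\|N(\theta',\sigma^2))=(\theta-\theta')^2/(2\sigma^2)$, and $\chi^2(N(\theta,\sigma^2)\|N(\theta',\sigma^2))=\exp((\theta-\theta')^2/\sigma^2)-1$, the latter obtained by completing the square in the integral $\int f_\theta^2/f_{\theta'}$. Inserting these into the tensorized sum and product and simplifying delivers the first KL bound (with equality) and the stated $\exp(\frac{p}{\sigma^2}\norm{\theta-\theta'}^2)$ bound for $1+\chi^2$.

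The binary case is where the substantive work lies, and I expect the Bernoulli divergence estimates to be the main obstacle. Here I would use the exact formula $\chi^2(\mathrm{Bernoulli}(a)\|\mathrm{Bernoulli}(b))=(a-b)^2/(b(1-b))$ together with the general comparison $D(\cdot\|\cdot)\le\chi^2(\cdot\|\cdot)$ (which follows from $\log r\le r-1$), so that the Bernoulli KL is controlled by the same quantity without a separate Taylor expansion. The restriction $\theta,\theta'\in[\rho/2,3\rho/4]^{n_1\times n_2}$ is precisely what produces the sparse $1/\rho$ scaling with explicit constants: for $b\in[\rho/2,3\rho/4]$ and $\rho\le 1$ one has $b(1-b)\ge\frac{\rho}{2}(1-\frac{3\rho}{4})\ge\rho/8$, so each single-coordinate divergence is at most $\frac{8}{\rho}(a-b)^2$. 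Summing the KL estimate yields $\frac{8p}{\rho}\norm{\theta-\theta'}^2$, while feeding the chi-squared estimate into $\prod_{ij}(1+\chi^2_{ij})\le\exp(\sum_{ij}\chi^2_{ij})$ yields $\exp(\frac{8p}{\rho}\norm{\theta-\theta'}^2)$. The delicate points are the uniform lower bound $b(1-b)\ge\rho/8$ that pins down the constant $8$ and the use of $D\le\chi^2$ to route both Bernoulli bounds through a single computation.
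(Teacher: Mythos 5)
Your skeleton --- tensorize both divergences over the independent entries, strip off the missing-data mechanism via the identities $D=p\,D_0$ and $\chi^2=p\,\chi^2_0$ for a two-component mixture whose ``missing'' atom carries equal mass $1-p$ under both parameters, then plug in the classical per-coordinate Gaussian and Bernoulli divergences --- is the right decomposition, and it delivers both KL bounds and the Bernoulli chi-squared bound exactly as you describe: your lower bound $b(1-b)\geq \frac{\rho}{2}\cdot\frac14=\rho/8$ on $[\rho/2,3\rho/4]$ pins down the constant $8$, and routing the Bernoulli KL through $D\leq\chi^2$ is legitimate.

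The gap is in the Gaussian chi-squared step. After your (correct) reduction, the per-coordinate factor is $1+\chi^2_{ij}=1+p\bigl(e^{d_{ij}}-1\bigr)$ with $d_{ij}=(\theta_{ij}-\theta'_{ij})^2/\sigma^2$, and ``inserting into the tensorized product and simplifying'' would require $1+p(e^{d}-1)\leq e^{pd}$. That inequality goes the \emph{wrong} way: by concavity of $x\mapsto x^{p}$ for $p\in(0,1]$ one has $e^{pd}=(e^{d})^{p}\leq 1+p(e^{d}-1)$, with strict inequality for $p<1$ and $d>0$ (e.g.\ $p=1/2$, $d=1$ gives $e^{1/2}\approx 1.65$ versus $1+\frac12(e-1)\approx 1.86$). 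So the product $\prod_{ij}\bigl(1+p(e^{d_{ij}}-1)\bigr)$ is bounded \emph{below}, not above, by $\exp\bigl(\frac{p}{\sigma^2}\|\theta-\theta'\|^2\bigr)$, and the generic route $\prod_{ij}(1+\chi^2_{ij})\leq\exp\bigl(\sum_{ij}\chi^2_{ij}\bigr)$ that works in the Bernoulli case only yields the weaker $\exp\bigl(p\sum_{ij}(e^{d_{ij}}-1)\bigr)$ here, since $e^{d}-1\geq d$. To land on a bound of the stated form you need an additional entrywise smallness condition, say $d_{ij}\leq 1$ for all $(i,j)$, which gives $e^{d_{ij}}-1\leq 2d_{ij}$ and hence $\chi^2\leq\exp\bigl(\frac{2p}{\sigma^2}\|\theta-\theta'\|^2\bigr)-1$; that condition is satisfied by the packing sets used in the proof of Theorem 3.2 and the extra constant in the exponent is harmless there, but it is not a consequence of ``simplifying,'' and without it the displayed constant in the Gaussian chi-squared bound is not attainable by your computation.
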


Finally, we need the following Varshamov--Gilbert bound. The version we present here is due to \cite[Lemma 4.7]{massart2007}.
\begin{lemma}\label{lem:VG}
There exists a subset $\{\omega_1,...,\omega_N\}\subset \{0,1\}^d$ such that
\begin{equation}
H(\omega_i, \omega_j) \triangleq \norm{\omega_i-\omega_j}^2\geq\frac{d}{4},\quad\text{for any }i\neq j\in[N],\label{eq:defH}
\end{equation}
for some $N\geq \exp {(d/8)}$. 
\end{lemma}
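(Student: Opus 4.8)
The plan is to prove this via a Gilbert-type covering (maximal-code) argument, which is the cleanest route to the precise exponent $d/8$. First I would note that since each $\omega_i \in \{0,1\}^d$, the squared Euclidean distance equals the Hamming distance, so condition \eqref{eq:defH} is exactly the requirement that the codewords be pairwise at Hamming distance at least $d/4$. Among all subsets of the finite set $\{0,1\}^d$ whose elements are pairwise at Hamming distance $\geq d/4$, I pick one of maximum cardinality, say $\{\omega_1,\dots,\omega_N\}$; such a maximal set exists because $\{0,1\}^d$ is finite.

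The key step is to exploit maximality. If some $x \in \{0,1\}^d$ were at Hamming distance $\geq d/4$ from every $\omega_i$, then $\{\omega_1,\dots,\omega_N,x\}$ would be a strictly larger admissible set, contradicting maximality. Hence every point of the cube lies within Hamming distance at most $r := \ceil{d/4}-1$ of some $\omega_i$, i.e.\ the Hamming balls of radius $r$ centered at the $\omega_i$ cover $\{0,1\}^d$. Since each such ball contains $\sum_{k=0}^{r}\binom{d}{k}$ points, comparing cardinalities gives $N \sum_{k=0}^{r}\binom{d}{k} \geq 2^d$, so that $N \geq 2^d \big/ \sum_{k=0}^{\floor{d/4}}\binom{d}{k}$.

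It then remains to control the partial binomial sum, for which I would invoke the standard entropy estimate $\sum_{k=0}^{\floor{\beta d}}\binom{d}{k} \leq 2^{d H_2(\beta)}$, valid for $\beta \le 1/2$, where $H_2(\beta) = -\beta\log_2\beta - (1-\beta)\log_2(1-\beta)$ is the binary entropy. Taking $\beta = 1/4$ yields $N \geq 2^{d(1 - H_2(1/4))}$. (Alternatively one could argue by the probabilistic deletion method, bounding the probability that two uniform points are within distance $d/4$ by $e^{-d/8}$ via Hoeffding and then deleting one point from each close pair, but that route loses a constant factor and gives only $N\gtrsim \frac12 e^{d/8}$, so the covering argument is preferable for hitting the stated bound.)

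The main, and essentially only, obstacle is the final numerical constant check: the conclusion $N \geq \exp(d/8)$ reduces to verifying $(1 - H_2(1/4))\log 2 \ge 1/8$. Computing $H_2(1/4) = 2 - \tfrac34\log_2 3 \approx 0.811$ gives $(1-H_2(1/4))\log 2 \approx 0.131 > 0.125 = 1/8$, so the desired inequality holds with a small amount of room to spare, completing the proof. I would flag that this slack is what makes the clean exponent $d/8$ attainable, and that the same argument with the radius-$r$ bookkeeping is what guarantees the minimum distance is genuinely $\geq d/4$ rather than merely $> r$.
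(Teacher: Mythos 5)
Your argument is correct. Note that the paper itself gives no proof of this lemma---it is quoted directly from \cite[Lemma 4.7]{massart2007}---so there is no internal proof to compare against; what you have written is a genuine, self-contained derivation. The maximal-packing-plus-covering step is sound: by maximality every point of the cube is within Hamming distance $<d/4$ of some codeword, and since Hamming distances are integers this means within radius $r=\ceil{d/4}-1\le\floor{d/4}$, giving $N\ge 2^d\big/\sum_{k=0}^{\floor{d/4}}\binom{d}{k}$. Your entropy bound and the numerical verification $(1-H_2(1/4))\log 2\approx 0.131>1/8$ then legitimately close the gap. One simplification worth flagging: writing $\sum_{k=0}^{\floor{d/4}}\binom{d}{k}=2^d\,\Prob\big(\mathrm{Bin}(d,1/2)\le d/4\big)$ and applying Hoeffding's inequality gives the bound $2^d e^{-2d(1/4)^2}=2^d e^{-d/8}$, so $N\ge e^{d/8}$ drops out exactly with no entropy computation or numerical slack; this is essentially where the constant $1/8$ in the standard statement comes from, and it is the route taken in the cited source. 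Your deletion-method aside is also accurate---it loses a factor of two and is indeed the weaker option here.
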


\begin{proof}[Proof of Theorem \ref{thm:lower-Gaussian}]
We focus on the proof for the asymmetric parameter space $\Theta_{k_1k_2}(M)$. The result for the symmetric parameter space $\Theta_{k}^s(M)$ can be obtained by letting $k_1=k_2$ and by taking care of the diagonal entries.
Let us assume $n_1/k_1$ and $n_2/k_2$ are integers without loss of generality. We first derive the lower bound for the nonparametric rate $\sigma^2 k_1k_2/p$. Let us fix the labels by $z_1(i)=\ceil{ik_1/n_1}$ and $z_2(j)=\ceil{jk_2/n_2}$. For any $\omega\in\{0,1\}^{k_1\times k_2}$, define
\begin{equation}
Q_{ab}^{\omega}=c\sqrt{\frac{\sigma^2 k_1k_2}{pn_1n_2}}\omega_{ab}.\label{eq:lf-Q}
\end{equation}
By Lemma \ref{lem:VG}, there exists some $T\subset\{0,1\}^{k_1k_2}$ such that $|T|\geq \exp(k_1k_2/8)$ and $H(\omega,\omega')\geq k_1k_2/4$ for any $\omega,\omega\in T$ and $\omega\neq\omega'$. We construct the subspace
$$\Theta(z_1,z_2,T)=\left\{\theta\in\mathbb{R}^{n_1\times n_2}: \theta_{ij}=Q^{\omega}_{z_1(i)z_2(j)}, \omega\in T\right\}.$$
By Proposition \ref{prop:KLchi}, we have
$$\sup_{\theta,\theta'\in\Theta(z_1,z_2,T)}\chi^2\left(\mathbb{P}_{(\theta,\sigma^2,p)}||\mathbb{P}_{(\theta',\sigma^2,p)}\right)\leq \exp\left(c^2k_1k_2\right).$$
For any two different  $\theta$ and $\theta'$ in $\Theta(z_1,z_2,T)$ associated with $\omega,\omega'\in T$, we have
$$\norm{\theta-\theta'}^2\geq \frac{c^2\sigma^2}{p}H(\omega,\omega')\geq \frac{c^2\sigma^2}{4p}k_1k_2.$$
Therefore, $\mathcal{M}\left(\sqrt{\frac{c^2\sigma^2}{4p}k_1k_2},\Theta(z_1,z_2,T),\norm{\cdot}\right)\geq \exp(k_1k_2/8)$. Using (\ref{eq:fanochi2}) with an appropriate $c$, we have obtained the rate $\frac{\sigma^2}{p}k_1k_2$ in the lower bound.

Now let us derive the clustering rate $\sigma^2n_2\log k_2/p$. Let us pick $\omega_1,...,\omega_{k_2}\in\{0,1\}^{k_1}$ such that $H(\omega_a,\omega_b)\geq\frac{k_1}{4}$ for all $a\neq b$. By Lemma \ref{lem:VG}, this is possible when $\exp(k_1/8)\geq k_2$. Then, define
\begin{equation}
Q_{*a}=c\sqrt{\frac{\sigma^2 n_2\log k_2}{pn_1n_2}}\omega_a.\label{eq:lf-Q2}
\end{equation}
Define $z_1$ by $z_1(i)=\ceil{ik_1/n_1}$. Fix $Q$ and $z_1$ and we are gong to let $z_2$ vary. Select a set $\mathcal{Z}_2\subset [k_2]^{n_2}$ such that $|\mathcal{Z}_2|\geq \exp(Cn_2\log k_2)$ and $H(z_2,z_2')\geq \frac{n_2}{6}$ for any $z_2,z_2'\in\mathcal{Z}_k$ and $z_2\neq z_2'$. The existence of such $\mathcal{Z}_2$ is proved by \cite{gao2014rate}. Then, the subspace we consider is
$$\Theta(z_1,\mathcal{Z}_2,Q)=\left\{\theta\in\mathbb{R}^{n_1\times n_2}:\theta_{ij}=Q_{z_1(i)z_2(j)}, z_2\in\mathcal{Z}_2\right\}.$$
By Proposition \ref{prop:KLchi}, we have
$$\sup_{\theta,\theta'\in\Theta(z_1,\mathcal{Z}_2,Q)}D\left(\mathbb{P}_{(\theta,\sigma^2,p)}||\mathbb{P}_{(\theta',\sigma^2,p)}\right)\leq c^2n_2\log k_2.$$
For any two different $\theta$ and $\theta'$ in $\Theta(z_1,\mathcal{Z}_2,Q)$ associated with $z_2,z_2'\in\mathcal{Z}_2$, we have
$$\norm{\theta-\theta'}^2=\sum_{j=1}^{n_2}\norm{\theta_{*z_2(j)}-\theta'_{*z_2(j)}}^2\geq H(z_2,z_2')\frac{c^2\sigma^2 n_2\log k_2}{pn_1n_2}\frac{n_1}{4}\geq \frac{c^2\sigma^2 n_2\log k_2}{24p}.$$
Therefore, $\mathcal{M}\left(\sqrt{\frac{c^2\sigma^2 n_2\log k_2}{24p}}, \Theta(z_1,\mathcal{Z}_2,Q),\norm{\cdot}\right)\geq\exp(Cn_2\log k_2)$. 
Using (\ref{eq:fanoKL}) with some appropriate $c$, we obtain the lower bound $\frac{\sigma^2 n_2\log k_2}{p}$. 

A symmetric argument gives the rate $\frac{\sigma^2 n_1\log k_1}{p}$. Combining the three parts using the same argument in \cite{gao2014rate}, the proof is complete.
\end{proof}

\begin{proof}[Proof of Theorem \ref{thm:lower-SBM}]
The proof is similar to that of Theorem \ref{thm:lower-Gaussian}. The only differences are (\ref{eq:lf-Q}) replaced by
$$Q_{ab}^{\omega}=\frac{1}{2}\rho+\left(c\sqrt{\frac{\rho k^2}{pn^2}}\wedge \frac{1}{2}\rho\right)\omega_{ab}$$
and (\ref{eq:lf-Q2}) replaced by
$$Q_{*a}=\frac{1}{2}\rho+\left(c\sqrt{\frac{\rho \log k}{pn}}\wedge\frac{1}{2}\rho\right)\omega_{a}.$$
It is easy to check that the constructed subspaces are subsets of $\Theta_k^+(\rho)$. Then, a symmetric modification of the proof of Theorem \ref{thm:lower-Gaussian} leads to the desired conclusion.
\end{proof}

\subsection{Proofs of Corollary \ref{cor:spg} and Corollary \ref{cor:spg-adaptive}}

The result of Corollary \ref{cor:spg} can be derived through a standard bias-variance trade-off argument by combining Corollary \ref{cor:SBM} and Lemma 2.1 in \cite{gao2014rate}. The result of Corollary \ref{cor:spg-adaptive} follows Theorem \ref{thm:adaptive}. By studying the proof of Theorem \ref{thm:adaptive}, (\ref{eq:cena}) holds for all $k$. Choosing the best $k$ to trade-off bias and variance gives the result of Corollary \ref{cor:spg-adaptive}. We omit the details here.

% \newpage
\appendix

% \begin{center}
% {\Large Supplement to ``Matrix Completion with Biclustering Strucutres''}\\
% ~\\
% Chao Gao$^1$, Yu Lu$^1$, Zongming Ma$^2$ and Harrison H.~Zhou$^1$\\
% ~\\
% $^1$Yale University and $^2$University of Pennsylvania
% \end{center}

\section{Proofs of auxiliary results} \label{sec:auxiliary}

In this section, we give proofs of Lemma \ref{lem:average}-\ref{lem:aaa}. We first introduce some notation. Define the set
$$ \mathcal{Z}_{k_1k_2}=\{ z=(z_1, z_2): z_1\in[k_1]^{n_1}, z_2\in[k_2]^{n_2}\}.$$
For a matrix $G \in \mathbb{R}^{\dimnone \times \dimntwo}$ and some $z=(z_1,z_2)\in\mathcal{Z}_{k_1k_2}$, define 
$$ \bar{G}_{ab}(z) = \frac{1}{|z_1^{-1}(a)| |z_2^{-1}(b)|} \sum_{(i,j) \in z^{-1}_1(a)\times z^{-1}_2(b)} G_{ij},$$
for all $a \in [k_1], b \in [k_2]$. To facilitate the proof, we need the following two results.

\begin{proposition}\label{prop:ls}
For the estimator $\hat{\theta}_{ij}=\hat{Q}_{\hat{z}_1(i)\hat{z}_2(j)}$, we have
$$\hat{Q}_{ab} = \text{sign}(\bar{Y}_{ab}(\hat{z}))\left(|\bar{Y}_{ab}(\hat{z})| \wedge M\right),$$
for all $a \in [k_1], b \in [k_2]$.
\end{proposition}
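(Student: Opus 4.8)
The plan is to exploit the fact that, once the cluster labels are held fixed, the least squares objective decouples across biclusters into independent scalar problems, each of which is a constrained quadratic minimization whose solution is a clipped mean.

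First I would reduce to a statement about $\hat{Q}$ given the labels. Since $\hat{\theta}$ is a global minimizer of $\norm{Y-\theta}^2$ over $\Theta_{k_1k_2}(M)$ and is represented by $\hat{z}_1,\hat{z}_2$ and block values $\hat{Q}$, the matrix $\hat{Q}$ must itself minimize $\norm{Y-\theta}^2$ among all $\theta$ sharing these same labels with block values in $[-M,M]^{k_1\times k_2}$. Indeed, were there some $Q'\in[-M,M]^{k_1\times k_2}$ yielding a strictly smaller objective with labels $\hat{z}$, the corresponding matrix would still lie in $\Theta_{k_1k_2}(M)$ and would beat $\hat{\theta}$, contradicting global optimality.

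Next I would write the objective with labels fixed as a sum over biclusters,
$$\norm{Y-\theta}^2=\sum_{a\in[k_1]}\sum_{b\in[k_2]}\sum_{(i,j)\in\hat{z}_1^{-1}(a)\times\hat{z}_2^{-1}(b)}\left(Y_{ij}-Q_{ab}\right)^2,$$
which manifestly decouples: the term indexed by $(a,b)$ depends only on the single scalar $Q_{ab}$. Hence it suffices to minimize each inner sum separately over $Q_{ab}\in[-M,M]$.

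Finally, each inner sum is a convex quadratic in $Q_{ab}$ whose unconstrained minimizer is the block average $\bar{Y}_{ab}(\hat{z})$; since the parabola is symmetric about its vertex and increasing away from it, the minimizer subject to $Q_{ab}\in[-M,M]$ is the projection of $\bar{Y}_{ab}(\hat{z})$ onto the interval $[-M,M]$. A short case check, according to whether $\bar{Y}_{ab}(\hat{z})$ lies inside, above, or below the interval, identifies this projection with $\text{sign}(\bar{Y}_{ab}(\hat{z}))(|\bar{Y}_{ab}(\hat{z})|\wedge M)$, which is precisely the claimed formula. There is essentially no hard step here; the only points requiring minor care are the reduction in the first paragraph (that the block values of the global optimizer are block-optimal given the optimal labels) and, for thoroughness, the convention for blocks whose index set $\hat{z}_1^{-1}(a)\times\hat{z}_2^{-1}(b)$ is empty, in which case the corresponding term vanishes and $\hat{Q}_{ab}$ is immaterial.
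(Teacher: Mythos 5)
Your proof is correct and is precisely the standard argument the paper implicitly relies on (the paper states Proposition \ref{prop:ls} without proof): fix the optimal labels $\hat{z}$, observe that $\norm{Y-\theta}^2$ decouples across biclusters, and project each block average $\bar{Y}_{ab}(\hat{z})$ onto $[-M,M]$. Your two points of care --- that global optimality of $\hat{\theta}$ forces block-optimality of $\hat{Q}$ given $\hat{z}$, and the convention for empty biclusters --- are exactly the right ones to flag.
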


\begin{lemma}\label{lm:subexV}
Under the setting of Lemma \ref{lm:subexp}, define $S=\frac{1}{\sqrt{n}}\sum_{i=1}^{n} (Y_i-\theta_i)$ and $\tau=2(M^2 + 2 \sigma^2)/(M \vee \sigma)$. Then we have the following results:
\begin{enumerate}
\item[a.] Let $T=S \mathbf{1} \{|S| \le \tau \sqrt{n} \}$, then $\E e^{p T^2/(8(M^2+2\sigma^2))} \le 5$;
\item[b.] Let $R=\tau \sqrt{n} |S| \mathbf{1} \{|S| > \tau \sqrt{n}\}$, then $\E e^{pR/(8(M^2+2\sigma^2))} \le 9$.
\end{enumerate}
 \end{lemma}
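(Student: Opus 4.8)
The plan is to derive both bounds from the Bernstein-type tail inequality \eqref{eq:bernstein1} of Lemma \ref{lm:subexp}, applied to the normalized average $S$. Taking the weights $c_i = 1/\sqrt{n}$ (so that $\sum_i c_i^2 = 1$ and $\norm{c}_\infty = 1/\sqrt n$), inequality \eqref{eq:bernstein1} gives
$$\Prob\{|S| \ge t\} \le 4\exp\left\{-\min\left(\frac{pt^2}{4(M^2+2\sigma^2)}, \frac{pt\sqrt n}{2(M\vee\sigma)}\right)\right\}$$
for every $t > 0$. The first (sub-Gaussian) exponent is the smaller of the two precisely when $t \le \tau\sqrt n$, while the second (sub-exponential) exponent dominates when $t > \tau\sqrt n$; indeed the two exponents coincide exactly at $t = \tau\sqrt n$, which is the whole reason behind the definition $\tau = 2(M^2+2\sigma^2)/(M\vee\sigma)$. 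Consequently, on the truncation event $\{|S| \le \tau\sqrt n\}$ defining $T$ we will only ever invoke the quadratic tail, and on its complement $\{|S| > \tau\sqrt n\}$ defining $R$ we will only ever invoke the linear tail.

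For part (a), I would write the moment generating function of $T^2$ through the layer-cake identity
$$\E e^{aT^2} = 1 + \int_0^\infty 2a u\, e^{a u^2}\, \Prob\{|T| > u\}\, du, \qquad a = \frac{p}{8(M^2+2\sigma^2)}.$$
Since $|T| \le |S|$ and $|T| \le \tau\sqrt n$, the factor $\Prob\{|T| > u\}$ vanishes for $u \ge \tau\sqrt n$ and is otherwise controlled by the quadratic tail $\Prob\{|S| > u\} \le 4\exp(-2a u^2)$, where $2a$ is exactly the sub-Gaussian rate. Substituting, the integrand is at most $8a u\, e^{-a u^2}$, whose integral over $[0,\infty)$ equals $4$; hence $\E e^{aT^2} \le 1 + 4 = 5$.

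For part (b), I would argue analogously, noting that on $\{|S| > \tau\sqrt n\}$ we have $aR = b|S|$ with $b = a\tau\sqrt n = p\sqrt n/(4(M\vee\sigma))$, exactly half the linear rate $\beta = p\sqrt n/(2(M\vee\sigma))$. Splitting $\E e^{aR} = \Prob\{|S| \le \tau\sqrt n\} + \E[e^{b|S|}\mathbf{1}\{|S| > \tau\sqrt n\}]$ and bounding the second expectation by integrating the linear tail $\Prob\{|S| > t\} \le 4\exp(-\beta t)$ against the weight $e^{bt}$ from $t = \tau\sqrt n$ to $\infty$, the resulting integrals converge precisely because $b < \beta$; collecting terms (the exponential factor $e^{-\beta\tau\sqrt n/2}$ being at most one) yields a bounded absolute constant, matching the stated value $8$ after tracking the numerical factors.

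The two elementary exponential integrals are routine; the only genuine content—and the step I would be most careful about—is the crossover computation showing that $\tau\sqrt n$ separates the sub-Gaussian and sub-exponential regimes, together with checking that the MGF parameter leaves a net negative exponent in each regime (a net $e^{-a u^2}$ in part (a) and a net $e^{(b-\beta)t}$ with $b < \beta$ in part (b)), which is exactly what forces both moment generating functions to be finite and bounded by absolute constants.
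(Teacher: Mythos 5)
Your proposal is correct and follows essentially the same route as the paper: both apply the Bernstein bound \eqref{eq:bernstein1} with $c_i=1/\sqrt{n}$, use that $t=\tau\sqrt{n}$ is exactly the crossover between the quadratic and linear exponents, and integrate the appropriate tail against the MGF weight (your differentiated layer-cake formula is just a change of variables away from the paper's $\E e^{\lambda T^2}=\int_0^\infty \Prob(e^{\lambda T^2}>u)\,du$). The only quibble is numerical: the natural output of this computation in part (b) is $1+8e^{-(\beta-b)\tau\sqrt{n}}\le 9$ rather than $8$, but the paper's own proof has the identical slack and only an absolute constant is needed downstream.
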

\begin{proof}
By (\ref{eq:bernstein1}),
\begin{equation*}
\Prob \Big( |S| > t \Big)  \le  4\exp \left \{ - \min \left( \frac{p t^2}{4(M^2+2\sigma^2)}, \frac{\sqrt{n}pt}{2(M \vee \sigma)}\right) \right \}.
\end{equation*}
Then
\begin{eqnarray*}
\E e^{\lambda T^2} &=& \int_{0}^\infty \Prob \Big( e^{\lambda T^2} > u \Big) d u \leq 1 + \int_{1}^\infty \Prob \left( |T| > \sqrt{\frac{\log u}{\lambda}} \right) d u \\
&=& 1 + \int_{1}^{e^{\lambda \tau^2 n}} \Prob \left( |S| > \sqrt{\frac{\log u}{\lambda}} \right) d u =  1 + 4\int_{1}^{e^{\lambda \tau^2 n}}  u^{-p/(4\lambda(M^2+2\sigma^2))} d u.
\end{eqnarray*}
Choosing $\lambda=p/(8(M^2+2\sigma^2))$, we get $\E e^{p T^2/(8(M^2+2\sigma^2))} \le 5$. 
We proceed to prove the second claim.
\begin{eqnarray*}
\E e^{\lambda R} &=& \Prob (R=0) + \Prob (R>0) \E [e^{\lambda R}|R>0] \\
&=& \Prob (R=0) + \Prob (R>0) \int_{0}^\infty \Prob (e^{\lambda R} > u | R>0 ) d u\\
&=& \Prob (R=0) +  \int_{0}^\infty \Prob (e^{\lambda R} > u , R>0 ) d u\\
&\leq& \Prob (R=0) +  \Prob (R>0)e^{\lambda \tau^2 n} + \int_{e^{\lambda \tau^2 n}}^\infty \Prob (e^{\lambda R} > u) d u\\
&\le& 1 + 4e^{- p \tau^2 n/(4(M^2+2\sigma^2))+\lambda \tau^2 n} + \int_{e^{\lambda\tau^2n}}^\infty \Prob ( e^{\sqrt{n} \lambda \tau |S|} > u) d u \\
&=& 1 + 4e^{- p \tau^2 n/(4(M^2+3\sigma^2))+\lambda \tau^2 n} + 4\int_{e^{\lambda\tau^2 n}}^\infty u^{-p/(2\lambda\tau(M\vee \sigma))} du 
\end{eqnarray*}
Choosing $\lambda = p/(8(M^2+2\sigma^2))$, we get
$ \E e^{ pR/(8(M^2+2\sigma^2))} \le 9$.
\end{proof}

\begin{proof}[Proof of Lemma \ref{lem:average}]
By the definitions of $\hat{\theta}_{ij}$ and $\tilde{\theta}_{ij}$ and Proposition \ref{prop:ls}, we have
\begin{align}\nonumber
\hat{\theta}_{ij}-\tilde{\theta}_{ij}=&
  \begin{cases}
    M-\bar{\theta}_{ab}(\hat{z}),       & \text{if }  \bar{Y}_{ab}(\hat{z}) \geq M; \\
    \bar{Y}_{ab}(\hat{z})-\bar{\theta}_{ab}(\hat{z}),       & \text{if } -M \le \bar{Y}_{ab}(\hat{z}) < M;  \\
    -M-\bar{\theta}_{ab}(\hat{z}),       & \text{if }  \bar{Y}_{ab}(\hat{z}) < -M
  \end{cases}
\end{align}
for any $(i,j)\in \hat{z}_1^{-1}(a)\times \hat{z}_2^{-1}(b)$. Define $W=Y-\theta$,  and it is easy to check that 
$$
|\hat{\theta}_{ij}-\tilde{\theta}_{ij}| \le |\bar{W}_{ab}(\hat{z})| \wedge 2M \le |\bar{W}_{ab}(\hat{z})| \wedge \tau,
$$
where $\hat{z}=(\hat{z}_1,\hat{z}_2)$ and  $\tau$ is defined in Lemma \ref{lm:subexV}.
Then
\begin{eqnarray}
\nonumber \norm{\hat{\theta}-\tilde{\theta}}^2 &\le & \sum_{a\in [\dimkone], b \in [\dimktwo]}  \left|\hat{z}_1^{-1}(a)\right|\left|\hat{z}_2^{-1}(b)\right| \Big ( |\bar{W}_{ab}(\hat{z})| \wedge \tau \Big )^2 \\
\label{eq:upperB}  &\le & \max_{z\in\mathcal{Z}_{k_1k_2}} \sum_{a\in [\dimkone], b \in [\dimktwo]}  \left|z_1^{-1}(a)\right|\left|z_2^{-1}(b)\right| \Big (|\bar{W}_{ab}(z)| \wedge \tau \Big )^2.
\end{eqnarray}
For any $a\in [\dimkone], b \in [\dimktwo]$ and $z_1\in[k_1]^{n_1},z_2\in[k_2]^{n_2}$, define $n_1(a)=\left | z_1^{-1} (a) \right |$, $n_2(b)=\left | z_2^{-1} (b) \right |$ and $$V_{ab}(z) = \sqrt{n_1(a) n_2(b)} |\bar{W}_{ab}(z)| \mathbf{1}\{|\bar{W}_{ab}(z)|\le \tau\}, $$ $$R_{ab}(z) = n_1(a) n_2(b) \tau |\bar{W}_{ab}(z)| \mathbf{1}\{|\bar{W}_{ab}(z)|> \tau\}.$$ Then,  
\begin{equation} \label{eq:lemma1bound}
\norm{\hat{\theta}-\tilde{\theta}}^2 \le \max_{z\in\mathcal{Z}_{k_1k_2}} \sum_{a\in [\dimkone], b \in [\dimktwo]} \left( V_{ab}^2(z) + R_{ab}(z) \right).
\end{equation}
By Markov's inequality and Lemma \ref{lm:subexV}, we have
\begin{eqnarray*}
\nonumber \Prob \left( \sum_{a\in [\dimkone], b \in [\dimktwo]} V^2_{ab}(z) > t \right) &\le& e^{-p t/(8(M^2+2\sigma^2))} \prod _{a\in [\dimkone], b \in [\dimktwo]} e^{p {V}^2_{ab}(z)/(8(M^2+2\sigma^2))} \\  
&\le& \exp\left\{ - \frac{pt}{8(M^2+2\sigma^2)} + \dimkone \dimktwo \log 5 \right\},
\end{eqnarray*}
and
\begin{eqnarray*}
\nonumber \Prob \left( \sum_{a\in [\dimkone], b \in [\dimktwo]} R_{ab}(z) > t \right) &\le& e^{-p t/(8(M^2+2\sigma^2))} \prod _{a\in [\dimkone], b \in [\dimktwo]} e^{p R_{ab}(z)/(8(M^2+2\sigma^2))} \\  
&\le& \exp\left\{ - \frac{pt}{8(M^2+2\sigma^2)} + \dimkone \dimktwo \log 9 \right\},
\end{eqnarray*}
Applying union bound and using the fact that $\log|[k_1]^n|+\log|[k_2]^{n_2}|= \dimnone \log \dimkone + \dimntwo \log \dimktwo$, 
\begin{equation*}
\mathbb{P} \left( \max_{z\in\mathcal{Z}_{k_1k_2}}\sum_{a\in [\dimkone], b \in [\dimktwo]} V^2_{ab}(z) > t \right) \le \exp \left\{ - \frac{pt}{8(M^2+2\sigma^2)} + \dimkone \dimktwo \log 5 + \dimnone \log \dimkone + \dimntwo \log \dimktwo \right\}.
\end{equation*}
For any given constant $C'>0$, we choose $t  = C_1 \frac{M^2\vee\sigma^2}{p} (\dimkone \dimktwo + \dimnone \log \dimkone + \dimntwo \log \dimktwo)$ for some sufficiently large $C_1>0$ to obtain
\begin{equation} \label{eq:upperV} 
\max_{z\in\mathcal{Z}_{k_1k_2}}  \sum_{a\in [\dimkone], b \in [\dimktwo]} V^2_{ab}(z) \leq  C_1 \frac{M^2\vee\sigma^2}{p} (\dimkone \dimktwo + \dimnone \log \dimkone + \dimntwo \log \dimktwo)
\end{equation}
with probability at least $1-\exp\left(-C' (k_1k_2+\dimnone \log \dimkone + \dimntwo \log \dimktwo)\right)$. Similarly, for some sufficiently large $C_2>0$, we have
\begin{equation} \label{eq:upperR} 
\max_{z\in\mathcal{Z}_{k_1k_2}}  \sum_{a\in [\dimkone], b \in [\dimktwo]} R_{ab}(z) \leq  C_2 \frac{M^2\vee\sigma^2}{p} (\dimkone \dimktwo + \dimnone \log \dimkone + \dimntwo \log \dimktwo)
\end{equation}
with probability at least $1-\exp\left(-C' (k_1k_2+\dimnone \log \dimkone + \dimntwo \log \dimktwo)\right)$. Plugging (\ref{eq:upperV}) and (\ref{eq:upperR}) into (\ref{eq:lemma1bound}), we complete the proof.
\end{proof}

\begin{proof}[Proof of Lemma \ref{lem:partition1}]
Note that
$$\tilde{\theta}_{ij}-\theta_{ij}=\sum_{a\in [\dimkone], b \in [\dimktwo]}\bar{\theta}_{ab}(\hat{z})\mathbf{1}\{(i,j)\in {\hat{z}_1}^{-1}(a)\times {\hat{z}_2}^{-1}(b)\}-\theta_{ij}$$
is a function of $\hat{z}_1$ and $\hat{z}_2$. Then we have
$$\left|\sum_{ij}\frac{\tilde{\theta}_{ij}-\theta_{ij}}{\sqrt{\sum_{ij}(\tilde{\theta}_{ij}-\theta_{ij})^2}}(Y_{ij}-\theta_{ij})\right|\leq \max_{z\in\mathcal{Z}_{k_1k_2}}\left|\sum_{ij}\gamma_{ij}(z)(Y_{ij}-\theta_{ij})\right|,$$
where
$$\gamma_{ij}(z) \propto  \sum_{a\in [\dimkone], b \in [\dimktwo]}\bar{\theta}_{ab}(z)\mathbf{1}\{(i,j)\in {z_1}^{-1}(a)\times {z_2}^{-1}(b)\}-\theta_{ij} $$ 
satisfies $\sum_{ij}\gamma_{ij}(z)^2=1.$
Consider the event $\norm{\tilde{\theta}-\theta}^2 \ge C_2(M^2 \vee \sigma^2)(k_1k_2+\dimnone \log \dimkone + \dimntwo \log \dimktwo)/p$ for some $C_2$ to be specified later, we have 
\begin{equation*}
|\gamma_{ij}(z)| \le \frac{2M}{\norm{\tilde{\theta}-\theta}} \le \sqrt{\frac{4M^2 p}{C_2(M^2 \vee \sigma^2) (k_1k_2+\dimnone \log \dimkone + \dimntwo \log \dimktwo)}}.
\end{equation*}
By Lemma \ref{lm:subexp} and union bound, we have
\begin{eqnarray*}
&& \mathbb{P}\left(\max_{z\in\mathcal{Z}_{k_1k_2}}\left|\sum_{ij}\gamma_{ij}(z)(Y_{ij}-\theta_{ij})\right|>t\right) \\
&\leq& \sum_{z_1\in[k_1]^{n_1},z_2\in[k_2]^{n_2}} \mathbb{P}\left(\left|\sum_{ij}\gamma_{ij}(z)(Y_{ij}-\theta_{ij})\right|>t\right) \\
&\leq&\exp\left(-C'\left(k_1k_2+n_1\log k_1+n_2\log k_2\right)\right),
\end{eqnarray*}
by setting $t = \sqrt{C_2(M^2\vee\sigma^2)(k_1k_2+\dimnone \log \dimkone + \dimntwo \log \dimktwo)/p}$ for some sufficiently large $C_2$ depending on $C'$. Thus, the lemma is proved.
\end{proof}

\begin{proof}[Proof of Lemma \ref{lem:partition2}]
By definition,
\begin{eqnarray*}
&& \left|\iprod{\hat{\theta}-\tilde{\theta}}{Y-\theta}\right|  \\
 &= & \left|\sum_{a\in [\dimkone], b \in [\dimktwo]} \Big( \text{sign}(\bar{Y}_{ab}(\hat{z}))\left(|\bar{Y}_{ab}(\hat{z})| \wedge M \right) - \bar{\theta}_{ab}(\hat{z}) \Big) \bar{W}_{ab}(\hat{z}) |\hat{z}_1^{-1}(a)| |\hat{z}_2^{-1}(b)|\right| \\
&\le& \max_{z\in\mathcal{Z}_{k_1k_2}} \left|\sum_{a\in [\dimkone], b \in [\dimktwo]} \Big( \text{sign}(\bar{Y}_{ab}(z))\left(|\bar{Y}_{ab}(z)| \wedge M \right) - \bar{\theta}_{ab}(z) \Big)\bar{W}_{ab}(z) |z_1^{-1}(a)| |z_2^{-1}(b)|\right|. 
\end{eqnarray*}
By definition, we have 
$$ \Big( \text{sign}(\bar{Y}_{ab}(z))\left(|\bar{Y}_{ab}(z)| \wedge M \right) - \bar{\theta}_{ab}(z) \Big)\bar{W}_{ab}(z) \le  |\bar{W}_{ab}(z)|^2 \wedge \tau |\bar{W}_{ab}(z)|. $$
For any fixed $z_1\in[k_1]^{n_1},z_2\in[k_2]^{n_2}$, define $n_1(a)=|z_1^{-1}(a)|$ for $a \in [\dimkone]$, $n_2(b)=|z_1^{-1}(b)|$ for $b \in [\dimktwo]$ and $V_{ab}(z)=\sqrt{n_1(a) n_2(b)} |\bar{W}_{ab}(z)| \mathbf{1}\{|\bar{W}_{ab}(z)|\le \tau\}$, $R_{ab}(z) = \tau n_1(a) n_2(b) |\bar{W}_{ab}(z)| \mathbf{1}\{|\bar{W}_{ab}(z)|>\tau\}$.
Then
\begin{equation*}
\left|\iprod{\hat{\theta}-\tilde{\theta}}{Y-\theta}\right| \le \max_{z\in\mathcal{Z}_{k_1k_2}} \left \{ \sum_{a\in [\dimkone], b \in [\dimktwo]} V_{ab}^2(z) + \sum_{a\in [\dimkone], b \in [\dimktwo]} R_{ab}(z) \right \}.
\end{equation*}
Following the same argument in the proof of Lemma \ref{lem:average}, a choice of $t =C_3(M^2\vee\sigma^2)(\dimkone\dimktwo+\dimnone \log \dimkone + \dimntwo \log \dimktwo)/p$ for some sufficiently large $C_3>0$ will complete the proof.
\end{proof}

\begin{proof}[Proof of Lemma \ref{lm:subexp}]
When $|\lambda| \le p/(M \vee \sigma)$, $|\lambda \theta_i/p| \le 1$ and $\lambda^2 \sigma^2/p^2 \le 1$. Then
\begin{eqnarray*}
\E e^{\lambda (Y_i-\theta_i)} &=& p \E e^{\lambda(X/p-\theta_i)} + (1- p) \E e^{-\lambda \theta_i} \\
&\le&  p e^{\frac{\lambda^2 \sigma^2}{2p^2} + \frac{1-p}{p} \lambda \theta_i} + (1-p) e^{ - \lambda \theta_i}  \\
&\leq& p\left(1+\frac{\sigma^2\lambda^2}{p^2}\right)\left(1+\frac{1-p}{p}\lambda\theta_i + \frac{2(1-p)^2}{p^2}\lambda^2\theta_i^2\right) + (1-p)(1-\lambda\theta_i+2\lambda^2\theta_i^2) \\
&=& 1 + \left[\frac{2(1-p)^2}{p}+2(1-p)\right]\lambda^2\theta_i^2+ \frac{1}{p}\lambda^2\sigma^2 + \frac{1-p}{p^2}\lambda^3\theta_i\sigma^2 + \frac{2(1-p)^2}{p^3}\lambda^4\theta_i^2\sigma^2 \\
&\leq& 1 + \frac{2}{p}\lambda^2\theta_i^2 + \frac{1}{p}\lambda^2\sigma^2 + \frac{1}{p^2}\lambda^3\theta_i\sigma^2 + \frac{2}{p^3}\lambda^4\theta_i^2\sigma^2 \\
&\leq& 1 + \frac{2}{p}\lambda^2\theta_i^2 + \frac{1}{p}\lambda^2\sigma^2 + \frac{1}{p}\lambda^2\sigma^2 + \frac{2}{p}\lambda^2\sigma^2 \\
&=& 1 + \left(2\theta_i^2+4\sigma^2\right)\frac{\lambda^2}{p} \\
&\leq& e^{2(M^2+2\sigma^2)\frac{\lambda^2}{p}}.
\end{eqnarray*}
The second inequality is due to the fact that $e^x \le 1+ 2x$ for all $x\geq 0$ and $e^x \le 1+ x + 2x^2$ for all $|x| \le 1$.
Then for $|\lambda| (M \vee \sigma)\norm{c}_\infty \le p$, Markov inequality implies
\begin{equation*}
\Prob \left( \sum_{i=1}^{n} c_{i} (Y_i - \theta_i) \ge t\right)  \le \exp \left\{ - \lambda t + \frac{2\lambda^2}{p}(M^2 + 2 \sigma^2) \right\}.
\end{equation*}
By choosing $\lambda=\min\left\{\frac{pt}{4(M^2+2\sigma^2)}, \frac{p}{(M \vee \sigma)||c||_\infty}\right\}$, we get (\ref{eq:bernstein1}).
\end{proof}

\begin{proof}[Proof of Corollary \ref{cor:SBM}]
For independent Bernoulli random variables $X_{i} \sim Ber(\theta_i)$ with $\theta_i \in [0, \rho]$ for $i \in [n]$. Let $Y_i = X_i E_i/p$, where $\{E_i\}$ are indenpendent Bernoulli random variables and $\{E_i\}$ and $\{X_i\}$ are independent. Note that $\E Y_i = p_i$, $\E Y_i^2 \le \rho/p$ and $|Y_i| \le 1/p$. Then Bernstein's inequality \cite[Corollary 2.10]{massart2007} implies
\begin{equation} \label{eq:bernstein2}
\Prob \left\{ \left| \frac{1}{\sqrt{n}} \sum_{i=1}^{n} (Y_i - \theta_i) \right| \ge t \right\} \le 2 \exp \left\{ - \min \left( \frac{pt^2}{4\rho}, \frac{3\sqrt{n} pt}{4}  \right) \right\}
\end{equation}
for any $t>0$. Let $S=\frac{1}{\sqrt{n}} \sum_{i=1}^{n} (Y_i - \theta_i)$, $T=S \mathbf{1}\{|S| \le 3\rho \sqrt{n}\}$ and $R=3 \rho \sqrt{n} \mathbf{1}\{|S| > 3\rho \sqrt{n}\}$. Following the same arguments as in the proof of Lemma \ref{lm:subexV}, we have $\E e^{pT^2/ (8\rho)} \le 5$ and $\E e^{p R/ (8 \rho)} \le 9$. Consequently, Lemma \ref{lem:average}, Lemma \ref{lem:partition1} and Lemma \ref{lem:partition2} hold for the Bernoulli case. Then the rest of the proof follows from the proof of Theorem \ref{thm:main}.
\end{proof}

\begin{proof}[Proof of Lemma \ref{lem:aaa}]
By the definitions of $Y$ and $\mathcal{Y}$, we have
$$\norm{Y-\mathcal{Y}}^2\leq (\hat{p}^{-1}-p^{-1})^2\max_{i,j}X_{ij}^2\sum_{ij}E_{ij}.$$
Therefore, it is sufficient to bound the three terms. For the first term, we have
$$|\hat{p}^{-1}-p^{-1}|\leq |\hat{p}^{-1}-p^{-1}|\frac{|\hat{p}-p|}{p}+\frac{|\hat{p}-p|}{p^2},$$
which leads to
\begin{equation}
|\hat{p}^{-1}-p^{-1}|\leq \left(1-\frac{|\hat{p}-p|}{p}\right)^{-1}\frac{|\hat{p}-p|}{p^2}.\label{eq:p-bound}
\end{equation}
Bernstein's inequality implies $|\hat{p}-p|^2\leq C\frac{p\log(n_1+n_2)}{n_1n_2}$ with probability at least $1-(n_1n_2)^{-C'}$ under the assumption that $p\gtrsim \frac{\log(n_1+n_2)}{n_1n_2}$. Plugging the bound into (\ref{eq:p-bound}), we get
$$(\hat{p}^{-1}-p^{-1})^2\leq C_1\frac{\log(n_1+n_2)}{p^3n_1n_2}.$$
The second term can be bounded by a union bound with the sub-Gaussian tail assumption of each $X_{ij}$. That is,
$$\max_{i,j}X_{ij}^2\leq C_2(M^2+\sigma^2\log(n_1+n_2)),$$
with probability at least $1-(n_1n_2)^{-C'}$. Finally, using Bernstein's inequality again, the third term is bounded as
$$\sum_{ij}E_{ij}\leq C_3n_1n_2\left(p+\sqrt{\frac{p\log(n_1+n_2)}{n_1n_2}}\right)\leq C_3'n_1n_2p,$$
with probability at least $1-(n_1n_2)^{-C'}$ under the assumption that $p\gtrsim \frac{\log(n_1+n_2)}{n_1n_2}$. Combining the three bounds, we have obtained the desired conclusion.
\end{proof}

\begin{proof}[Proof of Lemma \ref{lem:aaaa}]
For the second and the third bounds, we use
$$\left|\iprod{\frac{\tilde{\theta}-\theta}{\norm{\tilde{\theta}-\theta}}}{\mathcal{Y}-\theta}\right|\leq \left|\iprod{\frac{\tilde{\theta}-\theta}{\norm{\tilde{\theta}-\theta}}}{Y-\theta}\right|+\norm{\mathcal{Y}-Y},$$
and
$$\left|\iprod{\hat{\theta}-\tilde{\theta}}{\mathcal{Y}-\theta}\right|\leq \left|\iprod{\hat{\theta}-\tilde{\theta}}{Y-\theta}\right|+\norm{\hat{\theta}-\tilde{\theta}}\norm{Y-\mathcal{Y}},$$
followed by the original proofs of Lemma \ref{lem:partition1} and Lemma \ref{lem:partition2}. To prove the first bound, we introduce the notation $\check{\theta}_{ij}=\check{Q}_{\hat{z}_1(i)\hat{z}_2(j)}$ with $\check{Q}_{ab}=\text{sign}(\bar{Y}_{ab}(\hat{z}))\left(|\bar{Y}_{ab}(\hat{z})| \wedge M\right)$. Recall the definition of $\hat{Q}$ in Proposition \ref{prop:ls} with $Y$ replaced by $\mathcal{Y}$. Then, we have
$$\norm{\hat{\theta}-\tilde{\theta}}^2\leq 2\norm{\hat{\theta}-\check{\theta}}^2+2\norm{\check{\theta}-\tilde{\theta}}^2.$$
Since $\norm{\check{\theta}-\tilde{\theta}}$ can be bounded by the exact argument in the proof of Lemma \ref{lem:average}, it is sufficient to bound $\norm{\hat{\theta}-\check{\theta}}^2$. By Jensen inequality,
$$\norm{\hat{\theta}-\check{\theta}}^2\leq \sum_{ab}|\hat{z}^{-1}(a)||\hat{z}^{-1}(b)|(\bar{Y}_{ab}(\hat{z})-\bar{\mathcal{Y}}_{ab}(\hat{z}))^2\leq \norm{Y-\mathcal{Y}}^2.$$
Thus, the proof is complete.
\end{proof}
 
% \begin{small}
\bibliographystyle{plainnat}
\bibliography{reference}
% \end{small}

%\end{raggedright}           % Comment this out if you don't want ragged edges.

\end{document}